\documentclass[a4paper, 11pt,reqno]{amsart}
\usepackage{amsfonts, amsthm, amssymb, amsmath, stackengine, scalerel}
\usepackage{mathrsfs,array,tikz-cd}
\usepackage{hyperref}
\usepackage{eucal,fullpage,times,color,enumerate,accents, comment}
\usepackage[all]{xy}
\usepackage{xr}
\usepackage{url}
\usepackage{enumitem}
\usepackage[new]{old-arrows}
\usepackage{extpfeil}
\usepackage{turnstile} 
\usepackage{verbatim}  
\usepackage{graphics,float} 
\usetikzlibrary{graphs,decorations.pathmorphing,decorations.markings}
\input xy
\xyoption{all}
\usepackage{adjustbox}

\usepackage{algorithm,caption}  
\DeclareCaptionFont{lightblue}{\color{teal}}
\newcommand{\EXP}[1]{{\color{teal} {\tiny \bf (Exposition:)} {\bf #1}}}

\newextarrow{\xbigtoto}{{20}{20}{20}{20}}
{\bigRelbar\bigRelbar{\bigtwoarrowsleft\rightarrow\rightarrow}}

\newcommand{\calO}{{\mathcal{O}}}

\newcommand{\calU}{{\mathcal{U}}}
\newcommand{\calV}{{\mathcal{V}}}
\newcommand{\calA}{{\mathcal{A}}}

\newcommand{\calD}{{\mathcal{D}}}

\newcommand{\calR}{{\mathcal{R}}}

\newcommand{\calI}{\mathcal{I}}

\newcommand{\calH}{\mathcal{H}}

\newcommand{\calP}{\mathcal{P}}

\newcommand{\id}{\mathrm{id}}

\newcommand{\m}{\mathfrak{m}}

\newcommand{\N}{\mathbb{N}}

\DeclareFontFamily{U}{wncy}{}
\DeclareFontShape{U}{wncy}{m}{n}{<->wncyr10}{}
\DeclareSymbolFont{mcy}{U}{wncy}{m}{n}
\DeclareMathSymbol{\Sh}{\mathord}{mcy}{"58}

\DeclareSymbolFont{matha}{OML}{txmi}{m}{it}
\DeclareMathSymbol{\varv}{\mathord}{matha}{118}

\newcommand{\dom}{\mathrm{dom}}
\newcommand{\Eff}{\mathrm{Eff}}

\newcommand{\rstr}{\!\!\upharpoonright\!\!_}

\newcommand{\lranglet}[2]{\langle#1,#2\rangle}

\def\forkindep{\mathrel{\raise0.2ex\hbox{\ooalign{\hidewidth$\vert$\hidewidth\cr\raise-0.9ex\hbox{$\smile$}}}}}

\newcommand{\Fin}{\mathrm{Fin}}

\newcommand{\code}[1]{\lceil{#1}\rceil}

\newcommand{\Pw}{\calP(\w)}

\newcommand{\dn}{{\lnot\lnot}}

\newcommand{\ww}{{^{\omega}{\omega}} }
\newcommand{\lww}{{^{<\omega}{\omega}} }

\newcommand{\lk}{\leq_{\mathrm{K}}}

\newcommand{\glt}{\leq_{\mathrm{LT}}^{\mathsf{o}}}
\newcommand{\elt}{\equiv_{\mathrm{LT}}^{\mathsf{o}}}

\newcommand{\w}{\omega}

\newcommand{\clt}{\leq_{\mathrm{LT}}}

\newcommand{\sclt}{<_{\mathrm{LT}}}
\newcommand{\sglt}{<^{\circ}_{\mathrm{LT}}}
\newcommand{\Sumf}{\mathrm{Sum}_f}
\newcommand{\Sumn}{{\mathrm{Sum}_{1/n}}}
\newcommand{\Sump}{\mathrm{Sum}_P}
\newcommand{\Sumq}{\mathrm{Sum}_Q}
\newcommand{\Sums}{\mathrm{Sum}_S}

\newcommand{\pcolon}{\colon\!\!\subseteq}

\newcommand{\tto}{\rightrightarrows}

\newcommand{\fr}{\mbox{}^\smallfrown}

\newcommand{\dar}{\!\!\downarrow}
\newcommand{\uar}{\!\!\uparrow}

\newcommand{\Bmid}{\,\mathrel{\Big|}\,} 
\newcommand{\bmid}{\mathrel{\big|}} 

\newcommand{\Denz}{\mathrm{Den}_0}

\newcommand{\FS}{\mathrm{FS}}

\newcommand{\Ram}{\calR\mathrm{am}}
\newcommand{\n}{\mathfrak{n}}
\newcommand{\Tit}{\big[T[i]\big]^2}

\newcommand{\cx}{\code{x}}

\makeatletter
\newcommand*{\relrelbarsep}{.386ex}
\newcommand*{\relrelbar}{%
	\mathrel{%
		\mathpalette\@relrelbar\relrelbarsep
	}%
}
\newcommand*{\@relrelbar}[2]{%
	\raise#2\hbox to 0pt{$\m@th#1\relbar$\hss}%
	\lower#2\hbox{$\m@th#1\relbar$}%
}
\providecommand*{\rightrightarrowsfill@}{%
	\arrowfill@\relrelbar\relrelbar\rightrightarrows
}
\providecommand*{\leftleftarrowsfill@}{%
	\arrowfill@\leftleftarrows\relrelbar\relrelbar
}
\providecommand*{\xrightrightarrows}[2][]{%
	\ext@arrow 0359\rightrightarrowsfill@{#1}{#2}%
}
\providecommand*{\xleftleftarrows}[2][]{%
	\ext@arrow 3095\leftleftarrowsfill@{#1}{#2}%
}
\makeatother

\makeatletter
\newcommand*\Lslash
{%
	\text{\rlap{\kern.03em\@xxxii}}L%
}
\makeatother

\newcommand{\cosimp}[3]{\xymatrix@1{#1 \ar@<.4ex>[r] \ar@<-.4ex>[r] & {\ }#2 \ar@<0.8ex>[r] \ar[r] \ar@<-.8ex>[r] & {\ } #3 \ar@<1.2ex>[r] \ar@<.4ex>[r] \ar@<-.4ex>[r] \ar@<-1.2ex>[r] & \cdots }}
\newsavebox{\pullback}
\sbox\pullback{%
	\begin{tikzpicture}%
	\draw (0,0) -- (1ex,0ex);%
	\draw (1ex,0ex) -- (1ex,1ex);%
	\end{tikzpicture}}

\makeatletter
\newcommand*\dotp{\mathpalette\dotp@{.5}}
\newcommand*\dotp@[2]{\mathbin{\vcenter{\hbox{\scalebox{#2}{$\m@th#1\bullet$}}}}}
\makeatother

\newcommand{\equalizer}[2]{\xymatrix@1{#1 \ar@<.4ex>[r] \ar@<-0.4ex>[r] & {\ } #2}}

\newcommand{\adjunction}[4]{\xymatrix@1{#1{\ } \ar@<-0.3ex>[r]_{ {\scriptstyle #2}} & {\ } #3 \ar@<-0.3ex>[l]_{ {\scriptstyle #4}}}}

\usepackage{epigraph}

\usepackage{xcolor}
\usepackage{csquotes}
\usepackage{lipsum}

\definecolor{quotemark}{gray}{0.7}
\makeatletter
\newlength\origparskip

\newcommand{\fquote}{%
	\@ifnextchar[{\fquote@i}{\fquote@i[]}
}

\def\fquote@i[#1]{%
	\@ifnextchar[{\fquote@ii{#1}}{\fquote@ii{#1}[]}
}%

\def\fquote@ii#1[#2]{%
	\def\pqm@tempa{#1}%
	\def\pqm@tempb{#2}%
	\noindent
	\list
	{}
	{\setlength{\leftmargin}{0.3\textwidth}%
		\setlength{\rightmargin}{0.1\textwidth}%
		\setlength{\origparskip}{\parskip}}%
	\item[]%
	\begin{picture}(0,0)%
	\put(-15,-8){\makebox(0,0){\scalebox{4}{%
				\textcolor{quotemark}{\textquotedblright}}}}%
	\end{picture}%
	\begingroup
	\itshape
	\ignorespaces}%

\def\endfquote{%
	\endgroup
	\par
	\raggedleft
	\ifx\pqm@tempa\empty
	\else
	{\bfseries --- \pqm@tempa\par}%
	\setlength{\parskip}{\origparskip}%
	\ifx\pqm@tempb\empty
	\else
	(\pqm@tempb)%
	\fi
	\fi
	\par
	\endlist}
\makeatother

\begin{document}
	\bibliographystyle{alpha}
	\newtheorem{theorem}{Theorem}[section]
	\newtheorem*{theorem*}{Theorem}
	\newtheorem*{condition*}{Condition}
	\newtheorem*{definition*}{Definition}
	\newtheorem*{corollary*}{Corollary}
	\newtheorem{proposition}[theorem]{Proposition}
	\newtheorem{lemma}[theorem]{Lemma}
	\newtheorem{corollary}[theorem]{Corollary}
	\newtheorem{claim}[theorem]{Claim}
	\newtheorem{conclusion}[theorem]{Conclusion}
	\newtheorem{hypothesis}[theorem]{Hypothesis}
	\newtheorem{conjecture}[theorem]{Conjecture}
	\newtheorem{setup}[theorem]{Setup}
	\newtheorem{sumthm}[theorem]{Summary Theorem}

	\newtheorem{maintheorem}{Theorem}
	\renewcommand*{\themaintheorem}{\Alph{maintheorem}}
	\newtheorem{mainprop}[maintheorem]{Proposition}
	
	\theoremstyle{definition}
	\newtheorem{definition}[theorem]{Definition}
	\newtheorem{question}[theorem]{Question}
	\newtheorem{action}[theorem]{Action Item}
	\newtheorem{answer}[theorem]{Answer}
	\newtheorem{goal}[theorem]{Goal}
	\newtheorem{exercise}[theorem]{Exercise}
	\newtheorem{remark}[theorem]{Remark}
	\newtheorem{observation}[theorem]{Observation}
	\newtheorem{discussion}[theorem]{Discussion}
	\newtheorem{guess}[theorem]{Guess}
	\newtheorem{example}[theorem]{Example}
	\newtheorem{condition}[theorem]{Condition}
	\newtheorem{warning}[theorem]{Warning}
	\newtheorem{notation}[theorem]{Notation}
	\newtheorem{construction}[theorem]{Construction}
	
	\newtheorem{problem}{Problem}
	\newtheorem{fact}[theorem]{Fact}
	\newtheorem{thesis}[theorem]{Thesis}
	\newtheorem{convention}[theorem]{Convention}
	\newtheorem{summary}[theorem]{Summary}

	\title{The Gamified Kat\v{e}tov Order is Not Linear (in fact, very much not so)} 
    \author{Takayuki Kihara and Ming Ng}
    \thanks{TK was partially supported by JSPS KAKENHI Grant Numbers 23K28036 and 26K06892. MN was partially supported by a JSPS PostDoc Fellowship (Short-Term). } 
\maketitle

\begin{abstract} Recently, the authors introduced the Gamified Kat\v{e}tov order on filters over $\w$. This was shown to be strictly coarser than the classical Kat\v{e}tov order, and in fact collapses all MAD families to a single equivalence class. In the opposite direction, the present paper shows that the Gamified Kat\v{e}tov order also embeds $\mathcal{P}(\omega)/\mathrm{Fin}$, and thus contains an antichain of size continuum. The analysis brings into focus some interesting connections with Ramsey theory. As part of a broader programme investigating the interplay between combinatorial and computable complexity, we then apply our construction to produce a large new family of non-modest degrees in the extended Weihrauch hierarchy, which arise from associated effective subtoposes.
	
	
\end{abstract}

The Lawvere--Tierney order ($\clt$) in the Effective Topos, introduced by Hyland \cite{HylandEffective}, is a large-scale classification programme for investigating differences in computable complexity. Early work by Hyland and Pitts \cite{HylandEffective,PittsPhD} showed that the Turing degrees embed effectively into the $\clt$-order, and also established the existence of a minimum and maximum class. 
Attention therefore turned to investigating what lay in between the two extreme cases, a longstanding open problem in the area:
$$\id \sclt \,\,\cdots\cdots \,? \,\cdots\cdots \,\, \sclt \dn\,\,.$$

Very recently, we made the surprising discovery \cite{KiNg26} that this $\clt$-order is in fact tightly controlled by the combinatorics of filters on $\w$. To make the connection precise, we introduced what we call the {\em Gamified Kat\v{e}tov order} on filters over $\w$, before showing that a computable (and extended) variant of this order is isomorphic to the original $\clt$-order. This marks an important shift in our understanding. Category theorists have long attributed the difficulty of the $\clt$-order to the fact that the Effective Topos is not a Grothendieck topos. Our result locates a subtler issue: the structure of the $\clt$-order reflects deep shifts in set-theoretic complexity, now also interwined with constraints arising from computability.
\smallskip 

Read in the above context, the Gamified Kat\v{e}tov order can be seen as isolating the combinatorial mechanism underlying the $\clt$-order. Accordingly, we write $\glt$ for this order, suggestively echoing the notation $\clt$ with an added “$\mathsf{o}$”. However, the $\glt$-order also exhibits many striking features that make it interesting in its own right. Preliminary evidence indicates that it detects a form of complexity fundamentally distinct from those captured by usual set-theoretic tools: the $\glt$-order is strictly coarser than the classical Rudin-Keisler and Kat\v{e}tov orders \cite[Theorem A]{KiNg26}, and is also incomparable with the Tukey order on filters over $\w$ \cite[Theorem B]{KiNg26}.

The nature of its coarseness is also interesting. In the classical Kat\v{e}tov order, there exists a chain of $\mathfrak{c}^+$-many equivalence classes of MAD families as well as $\mathfrak{c}$-many pairwise incomparable ones \cite[\S2]{HruGF03}. By contrast, in the gamified context, all MAD families collapse to a single $\elt$-equivalence class \cite[Conclusion 5.3]{KiNg26}. On the other hand, we also discovered an infinite strictly ascending chain of classes within the $\glt$-order \cite[Theorem C]{KiNg26}. One reading of this result: despite its apparent coarseness, there still exists a rich internal structure within the Gamified Kat\v{e}tov order. 

\smallskip

The present paper extends this perspective. It would be consistent with previous results to conjecture that the $\glt$-order defines a countable linear order on the equivalence classes of filters (dually, ideals) on $\w$. However, we now show that this is emphatically not the case, answering \cite[Problem 3]{KiNg26}. Our first main theorem is the following.

\begin{maintheorem}[Theorem~\ref{thm:Pw-emb}]\label{mthm:Pwfin} The Gamified Kat\v{e}tov order admits an embedding of $\Pw/\Fin$. In particular, it contains a chain of length $\mathfrak{b}$, as well as an antichain of size $\mathfrak{c}$.
\end{maintheorem}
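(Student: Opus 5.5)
Since the paper has not yet restated the definition of $\glt$ from \cite{KiNg26}, I treat $F \glt G$ as a game in which the $F$-side player chooses $F$-large sets adaptively and the $G$-side answers. The key fact I will rely on is that $\glt$ refines to the Kat\v{e}tov order, so $F \lk G$ implies $F \glt G$. The plan is to attach to each $A \subseteq \w$ a filter $F_A$ such that
\[
A \subseteq^* B \iff F_A \glt F_B .
\]
The "in particular" clauses then follow from standard facts about $\Pw/\Fin$. It contains chains of length $\mathfrak{b}$: for instance, an unbounded $\le^*$-increasing sequence of length $\mathfrak{b}$ of functions gives a $\subseteq^*$-increasing family of subsets of $\w\times\w$, via their subgraphs. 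It also contains antichains of size $\mathfrak{c}$, namely an almost disjoint family of size $\mathfrak{c}$.

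\textbf{Construction.} Partition $\w$ into infinitely many blocks $I_n$. On each block put a filter $\mathcal{G}_n$, and let $\mathcal{G}_n$ be one of two types:
\begin{itemize}
\item a "hard" filter $H$ if $n \in A$;
\item a "trivial" one (e.g.\ principal or Fr\'echet) otherwise.
\end{itemize}
Then let $F_A$ be the Fubini-type sum $\mathrm{Fin}\text{-}\lim_n \mathcal{G}_n$. The sum over the Fr\'echet filter ensures that finite modifications of $A$ do not change $F_A$ up to $\elt$. For the hard components I would use the strictly increasing chain of filters from \cite[Theorem C]{KiNg26}. To keep the blocks independent, I would rather take pairwise $\glt$-incomparable pieces, built from Ramsey-type filters generated by homogeneous sets for a colouring on $[I_n]^2$. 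This is suggested by the abstract's mention of Ramsey theory and by the macros $\Ram$ and $\Tit$: the filter on block $n$ would be generated by sets containing a large homogeneous set for the $n$-th colouring.

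\textbf{Forward direction.} If $A \subseteq^* B$, build a Kat\v{e}tov map $F_A \lk F_B$ (hence $F_A \glt F_B$) blockwise:
\begin{itemize}
\item for $n \in A\cap B$, use the identity on the block;
\item for $n \in B\setminus A$, send the hard block onto a trivial one;
\item for the finitely many $n\in A\setminus B$, absorb the blocks by the Fr\'echet sum.
\end{itemize}
This step should be routine.

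\textbf{Backward direction (main obstacle).} Assume $A \not\subseteq^* B$, so there are infinitely many $n \in A\setminus B$, and suppose for contradiction that a winning strategy witnesses $F_A \glt F_B$. The strategy is adaptive, so I cannot simply read off a function. Instead I would use a fusion / diagonal argument over the infinitely many blocks in $A\setminus B$. At block $n$, the opponent plays a positive set drawn from the Ramsey structure: a set whose pairs have been coloured to defeat the first $n$ moves of the strategy. This follows the pattern of the ascending-chain proof of \cite[Theorem C]{KiNg26}. The crux is a pigeonhole/Ramsey lemma: a hard block cannot be reduced to a finite union of trivial blocks plus hard blocks indexed by other $n$. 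The difficulty is to show the finitely many moves the strategy makes cannot gather enough information from $F_B$ to win. I expect to need the block sizes $|I_n|$ to grow fast, for instance with Ramsey numbers, so that homogeneous sets survive every finite stage of the game. Proving this lemma and uniformising it over the infinitely many exceptional $n$ is where most of the work lies.
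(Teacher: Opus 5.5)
Your derivation of the ``in particular'' clauses from $\Pw/\Fin$ is fine. The embedding itself fails, though. The backward direction is not merely unproved: it is false for your construction, whatever hard pieces you choose.

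Membership in a $\Fin$-sum is tested on cofinitely many blocks separately, so a reduction may ignore the hard blocks of $F_A$ and use only its trivial ones. Concretely:
\begin{itemize}
\item Let $A$ be co-infinite (e.g.\ the even numbers) and fix a bijection $\pi\colon\w\to\w\setminus A$.
\item Define $h$ by sending each block $I_m$ onto the trivial block $I_{\pi(m)}$. Do this constantly onto the generating point if the trivial pieces are principal, or bijectively if they are Fr\'echet filters on infinite blocks.
\item If $X\in F_A$, then $X\cap I_n$ lies in the trivial piece for cofinitely many $n\notin A$. Hence $h^{-1}[X]\cap I_m$ lies in the trivial piece for cofinitely many $m$, i.e.\ $h^{-1}[X]\in F_\emptyset$.
\end{itemize}
So $F_A\lk F_\emptyset$, and hence $F_A\glt F_\emptyset$, although $A\not\subseteq^*\emptyset$. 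Combined with your own forward direction, every co-infinite $A$ collapses into the single class of $F_\emptyset$.

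Independently of this, your ``crux lemma'' has two further problems. First, it presupposes countably many mutually $\glt$-independent pieces, which is more than the bare non-linearity left open in \cite[Problem 3]{KiNg26}, and you give no construction of them. Second, it only forbids reduction to finite unions of blocks. By Theorem~\ref{thm:main-thm}(i), $\glt$ is closed under well-founded Fubini iterations, so an $F_B$-branching tree can combine blocks across many levels.

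The paper avoids both problems.
\begin{itemize}
\item \textbf{Monotone parametrisation.} The parameter $S$ enters monotonically, via Meza-Alc\'antara's ideals $\mathrm{Sum}_S$: putting $n\in S$ lowers the weight on $I_n$ from $r_n$ to $r_{n+1}$. Hence $P\subseteq^*Q$ gives $\Sump\subseteq\Sumq$, and the identity witnesses the forward direction.
\item \textbf{Correct picture of the reduction.} In Definition~\ref{def:GTK} the $\calU$-large set is chosen once, and all adaptivity lies in the $\calV$-branching tree. To refute $\Sump\glt\Sumq$ you must, for a fixed $\Phi$, produce one $A\in(\Sump)^*$ that defeats every $(\Sumq)^*$-branching tree.
\item \textbf{Handling adaptivity.} This is what the labelling function $\nu=\nu^{\Sumq}_\Phi$ does. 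Lemma~\ref{lem:separation-1} excludes a root labelled by a number. Lemma~\ref{lem:separation-2} forces every witness tree through one of countably many critical nodes $\tau_i$.
\item \textbf{Pigeonhole on intervals.} For each $\tau_i$ and each $n\in P\setminus Q$, colour the successors in $I_n$ by whether their label is $\bot$, lies in $I_{<n}$, or lies in $I_{\ge n}$. With conditions (1)--(2), this rules out the $\bot$ colour. It also yields successor sets of $w_Q$-weight at least $1/3$ whose label sets $C_k$ have $w_P$-weight at most $2^{-k}$.
\item \textbf{Diagonalisation.} Diagonalising over all $\tau_i$ gives $C\in\Sump$ such that each $D[i]$ is $(\Sumq)^*$-positive with all successor labels in $C$. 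Then $A=\w\setminus C$ defeats $\Phi$.
\end{itemize}
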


Two remarks regarding the implications of this result. First, as an immediate consequence, we also obtain an embedding of $\Pw/\Fin$ into the original $\clt$-order. From the category theory perspective, this identifies a large new family of effective subtoposes. From the viewpoint of computability theory, the embedding also yields a rich collection of non-modest degrees (in the context of Bauer's extended Weihrauch reducibility \cite{Bau22}). At this time of writing, non-modest degrees are still a relatively poorly understood class of computability notions; perhaps a large new supply of examples may begin to illuminate their structure. For more details, see Section~\ref{sec:topos}.

\medskip 
Second, the proof of Theorem~\ref{mthm:Pwfin} proceeds by embedding $\Pw/\Fin$ into the family of summable ideals.\footnote{A small caveat: the embedding sends the maximum class in $\Pw/\Fin$ to the subset family $\Pw$, which is technically not a summable ideal in the usual sense. However, see Theorem~\ref{thm:Pw-emb-Denz}.} This suggests asking how the Gamified Kat\v{e}tov order behaves {\em outside} the region of summable ideals. In particular, are there further instances of non-linearity to be found there?

\smallskip 

The following two theorems develop the beginnings of a bigger picture, identifying two additional instances of non-linearity.

\begin{maintheorem}[Theorem~\ref{thm:Sumn-Hind}]\label{mthm:Hindman} The canonical summable ideal $\Sumn$ and Hindman ideal $\calH$ are incomparable in the Gamified Kat\v{e}tov order. That is,
	$$\Sumn\not \glt \calH\qquad\text{and}\qquad \calH\not\glt \Sumn\,\,.$$
\end{maintheorem}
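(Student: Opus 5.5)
The two non-reductions need different arguments. In both I use the characterization of $\glt$ from \cite{KiNg26}: $\calI\glt\calJ$ holds iff the reducing player has a winning strategy in the associated game. Hence refuting a reduction means defeating every strategy $\sigma$, which I plan to do by a diagonal construction in which the opponent builds, stage by stage, a witness set of the required kind. Each direction exploits one structural feature that the other ideal lacks. $\Sumn$ is an $F_\sigma$ P-ideal whose membership is governed by an additive, measure-like quantity. $\calH$ is governed by a partition-regular family, the $\FS$-sets, via Hindman's theorem, but it is not a P-ideal.

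\emph{$\calH\not\glt\Sumn$.} Fix a strategy $\sigma$. The opponent's moves must force $\sigma$'s responses to produce a set that is $\Sumn$-small while what it is matched against is $\calH$-positive, i.e.\ contains an $\FS$-set. I would build an increasing sequence $x_0<x_1<\cdots$ with $\FS(\{x_n\})$ as the $\calH$-positive side. The $n$-th block is chosen only after $\sigma$'s finitely many responses to earlier moves are known, and it is chosen so that the new contribution on the $\Sumn$ side has weight at most $2^{-n}$. This is possible because $\sum_{k>N}1/k$ over any fixed finite window can be made arbitrarily small by pushing the window far out. The total weight is then finite, so the resulting set lies in $\Sumn$, which defeats $\sigma$. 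The point to verify is that adding $x_n$ creates only finitely many new finite sums that $\sigma$ must answer at that stage. This holds because $\FS(\{x_0,\dots,x_n\})$ is finite, so each stage is a finite extension problem.

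\emph{$\Sumn\not\glt\calH$.} This is the direction I expect to be the main obstacle, because it needs a Ramsey argument rather than a weight estimate. Fix $\sigma$. The opponent should play sets of infinite harmonic weight, i.e.\ $\Sumn$-positive sets such as long intervals $[2^k,2^{k+1})$, each of weight about $\log 2$. Using $\sigma$'s responses, I would define a finite colouring of $\N$ that records which of the opponent's blocks a given response came from, for example by parity of block index or some bounded amount of information. Hindman's theorem then yields an $\FS$-set homogeneous for this colouring. From this homogeneous set I would extract a play in which the $\calH$-side becomes $\FS$-free, hence in $\calH$, while the $\Sumn$-side retains infinite weight, which contradicts $\sigma$ winning.

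The delicate point is making the colouring finite while still capturing the unbounded information in $\sigma$. My first attempt is a fusion or iterated-Hindman construction: apply Hindman's theorem once per stage, refining a decreasing sequence of $\FS$-sets, and diagonalize along it, in the style of Baumgartner's proof via Galvin--Glazer ultrafilters. If that fails, I would fix an idempotent ultrafilter $p$ and let the opponent play so that at each stage $\sigma$'s answer is $p$-small. Since $p\cap\calH=\emptyset$, the union of these answers misses an $\FS$-set.
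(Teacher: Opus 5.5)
There is a genuine gap. It starts with the fact that in both directions you attach smallness and positivity to the wrong ideal.

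To refute $\calI\glt\mathcal{J}$ for ideals, one must do the following for every candidate $\Phi$. Produce a forbidden set $C\in\calI$, so that $A=\w\setminus C\in\calI^*$ is a legal challenge. Then show that every $\mathcal{J}^*$-branching tree $T$ with $[T]\subseteq\dom(\Phi)$ has a branch with output in $C$.

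The witness tree $T_A$ is chosen after, and depending on, $A$. So you cannot assemble $A$ by reacting to the reducer's moves within a play. The paper gets $A$-independent targets from the canonical tree $T_\Phi$. A numerical root label $c$ is dispatched with $A=\w\setminus\{c\}$. Once the root is labelled $\bot$, every candidate tree contains a critical node $\tau_i$ (Lemma~\ref{lem:separation-2}). It then suffices to equip each $\tau_i$ with a $\mathcal{J}$-positive set of successors whose labels $\nu(\tau_i\fr x)$ all lie in $C$ (Lemma~\ref{lem:separation-1}).

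Hence $\calH\not\glt\Sumn$ needs $C\in\calH$ (no $\FS$-set) against successor sets of infinite harmonic weight. Your first sketch does the reverse: a set in $\Sumn$ matched against an $\FS$-set, which is the shape of $\Sumn\not\glt\calH$. Your second sketch, an $\FS$-free side against an infinite-weight side, is the shape of $\calH\not\glt\Sumn$. As written, neither argument addresses the inequality it is attached to.

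Swapping the two sketches does not repair this, because each omits the decisive idea.

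For $\Sumn\not\glt\calH$, what must have small harmonic weight is the set of labels $\nu(\tau_i\fr x)$ of the new sums $x=x_n+s$, and these labels are dictated by $\Phi$. Pushing $x_n$ far out controls the successors, not their labels. Finiteness of $\FS(\{x_0,\dots,x_n\})$ does not show that a suitable $x_n$ exists. The paper applies the Canonical Hindman Theorem~\ref{thm:Hindman}, which handles $\w$-valued colourings, to $\nu_i$ at each critical node. Criticality rules out the constant case. In the remaining cases one extracts $h_{n_b}$ so that the sums with extreme summand $h_{n_b}$ carry a single label (min or max case), at most $b+1$ labels (min--max case), or at most $2^b$ labels (injective case). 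All of these labels are large enough that their harmonic weight is below $2^{-k}$ at stage $k=\lranglet{i}{b}$.

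For $\calH\not\glt\Sumn$, one must make $C$ $\FS$-free. Hindman's theorem produces $\FS$-sets inside colour classes and gives no such mechanism. A finite colouring also cannot record an $\w$-valued labelling. The ultrafilter fallback fails because a countable union of $p$-small sets need not be $p$-small, let alone in $\calH$; this is exactly because $\calH$ is not a P-ideal. The paper instead proves the stronger $\calD\not\glt\Sumn$ (Theorem~\ref{thm:pos-dif}). Each critical node has infinitely many distinct successor labels. At stage $k$ one picks finitely many successors of harmonic weight at least $1$ whose labels exceed all earlier labels by more than $\m_k$ and lie in a single residue class mod $\m_k$. Such a class exists by pigeonhole on the divergent harmonic sum. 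This ensures no label difference recurs across stages, so $C$ is eventually $\calD$-sparse and $C\in\calD\subseteq\calH$ by Lemma~\ref{lem:sparse}.
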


\begin{maintheorem}[Theorem~\ref{thm:Sumn-Ram}]\label{mthm:Ramsey} The canonical summable ideal $\Sumn$ and the Ramsey ideal $\Ram$ are incomparable in the Gamified Kat\v{e}tov order. That is,
	$$\Sumn\not \glt \Ram\qquad\text{and}\qquad \Ram\not\glt \Sumn\,\,.$$
\end{maintheorem}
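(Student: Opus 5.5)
The plan is to prove each of the two non-reductions by exhibiting a combinatorial invariant that is transported along $\glt$ and that separates $\Sumn$ from $\Ram$. Throughout I would use the strategy-based characterisation of the Gamified Kat\v{e}tov order from \cite{KiNg26}. There, a reduction $\calI\glt\calJ$ is witnessed by a winning strategy in a game. In it, the reducing player answers challenges from the dual filter of $\calJ$ by building, along a well-founded tree of finite plays, a ``multivalued'' Kat\v{e}tov map into $\w$. It must ensure that every $\calI$-positive target set is pulled back to a $\calJ$-positive set. The key technical point is that, unlike the classical Kat\v{e}tov order, the reduction may branch finitely or countably before committing. So any invariant must be robust under such well-founded branching, i.e.\ it must be proved by induction on the rank of the strategy tree.

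\emph{Step 1: $\Ram\not\glt\Sumn$.} The relevant feature of $\Sumn$ is that it is generated by the lower semicontinuous submeasure $\varphi(A)=\sum_{n\in A}1/n$. Hence every $\Sumn$-positive set splits into countably many finite blocks, each of $\varphi$-mass at least $1$. Equivalently, positivity is an ``$F_\sigma$, additive'' condition that can be witnessed by finite pieces. For $\Ram$, by contrast, positivity of a set of pairs $A\subseteq[\w]^2$ means that $A$ contains $[H]^2$ for some infinite $H$. By Ramsey's theorem, this is preserved under finite colourings: if $A$ is positive and $A=A_0\cup A_1$, then some $A_i$ is positive. I would define the invariant ``$\calI$ is \emph{gamified Ramsey}'': in the game where Opponent repeatedly $2$-colours the current positive set and Proponent keeps a colour class, Proponent can keep positivity forever. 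I would then show, by induction on the rank of a winning strategy for $\calI\glt\calJ$, that this property transfers from $\calI$ to $\calJ$. Finally, $\Sumn$ fails it: Opponent colours the $\varphi$-blocks alternately so that Proponent's chosen class has summable mass. This last argument needs the usual trick of halving the mass at stage $k$ so that the total is $\sum 2^{-k}<\infty$.

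\emph{Step 2: $\Sumn\not\glt\Ram$.} Here I would use the opposite asymmetry. $\Sumn$ is a P-ideal, and it is ``fragmentable'': for any positive set, Opponent can cut off finitely many elements at each stage so as to make the remaining mass arbitrarily small in tails. $\Ram$ is not a P-ideal, and it is \emph{not} approximable by finite pieces, since no finite set of pairs witnesses positivity. Assuming a strategy for $\Sumn\glt\Ram$, I would extract from each node of the strategy tree a finite-to-$\w$ assignment of pairs. Pulling back $\varphi$ then yields a lower semicontinuous submeasure $\psi$ on $[\w]^2$ such that $\psi$-positive mass forces $\Ram$-positivity. A canonical-Ramsey (Erd\H{o}s--Rado) argument applied to the colouring $\{a,b\}\mapsto$ (the node and the value of $\psi$ on $\{a,b\}$ rounded dyadically) then produces an infinite $H$ on which $\psi$ is homogeneous. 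Either $\psi\restriction[H]^2$ is summable, contradicting $[H]^2\notin\Ram$, or it is bounded below, which lets one pick a thin, non-Ramsey-positive set of pairs of infinite $\psi$-mass. That is a contradiction.

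The main obstacle I expect is Step 2. The difficulty is making the Erd\H{o}s--Rado homogenisation compatible with the \emph{well-founded branching} of the strategy, rather than a single Kat\v{e}tov map. My first attempt would be a fusion argument: refine $H$ level by level along the strategy tree, using the fact that only countably many nodes occur, and diagonalise so that a single infinite $H$ is homogeneous for every node simultaneously. The rank-induction in Step 1 should be comparatively routine once the game formulation of ``gamified Ramsey'' is fixed.
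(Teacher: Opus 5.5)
\textbf{Step 1 does not work.} The property you attribute to $\Ram$ (if $A$ is positive and $A=A_0\cup A_1$, then some $A_i$ is positive) is not a Ramsey phenomenon. It is just closure of an ideal under finite unions; Ramsey's theorem is only what makes $\Ram$ an ideal. $\Sumn$ has the same property, so in your game Proponent keeps a positive class at every finite stage for \emph{every} ideal.

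Your Opponent strategy for $\Sumn$ also fails: colouring mass-$\geq 1$ blocks alternately leaves infinitely many such blocks, hence infinite mass, in each class. To give Opponent any chance you must add a limit condition, such as a positive intersection, a positive pseudo-intersection, or a positive union of selected finite pieces. Under each of these $\Ram$ loses too. At round $n$ Opponent splits off the star $\{x\in[\w]^2: n\in x\}$, which lies in $\Ram$. Then every vertex has finite degree in whatever Proponent accumulates, so no $[H]^2$ survives. In the pseudo-intersection version $\Sumn$ actually \emph{wins}, being a P$^+$-ideal. So there is no separating invariant to transfer, by rank induction or otherwise.

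The paper's route for this half is different in kind. It first notes $\calD\lk\Ram$ via $\{a,b\}\mapsto|a-b|$. It then proves $\calD\not\glt\Sumn$ by diagonalising over all critical nodes $\tau_i$ of the labelling $\nu$. Below each $\tau_i$ it picks finite chunks of successors of harmonic mass $\geq 1$ whose labels lie in a single residue class modulo a large modulus. The accumulated label set $C$ is then eventually $\calD$-sparse, hence in $\calD$, while each $D[i]$ is $\Sumn$-positive.

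\textbf{Step 2 uses the right tool but the logic is wrong.} Canonical Ramsey applied to successor labels is what the paper uses, but your dichotomy is misassigned. A reduction $\Sumn\glt\Ram$ forbids a $\Ram$-positive set of successors whose labels form a $\Sumn$-small set. It says nothing against a $\Ram$-small set of pairs carrying infinite label mass, so your second horn yields no contradiction. This traces back to your description of the game: pulling positive sets back to positive sets is not the Kat\v{e}tov condition, which says that positive sets on the $\Ram$ side have positive images.

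The correct split works at a critical node $\tau_i$ with the colouring $x\mapsto\nu(\tau_i\fr\code{x})$. The constant form is impossible, because criticality puts every label fibre, including $\bot$, into $\Ram$. In the min, max and injective forms you thin $T[i]$ to $\{t_b\}_b$ so that the labels on the pairs added at stage $k=\lranglet{i}{b}$ have harmonic mass $<2^{-k}$, using $b\leq k$.

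The mechanism you are missing for the branching is not a fusion giving one $H$ homogeneous for all nodes; that is unnecessary. It is the labelling $\nu$, which supplies values at nodes where $\Phi$ is undefined, together with critical nodes. You cannot know in advance which critical node a witness tree $T_A$ for $A=\w\setminus C$ contains (Lemma~\ref{lem:separation-2}). So what must be uniform is a single forbidden set $C=\bigcup_k C_k\in\Sumn$, with $D[i]=[\{t_b\}_b]^2\notin\Ram$ for every $i$. Positivity of $D[i]$ then gives a successor in $T_A$ with label in $C$, and Lemma~\ref{lem:separation-1} realises that label by a path. Your pulled-back submeasure on a strategy tree provides none of this.
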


Formal definitions will be deferred till Section~\ref{sec:other-ideals}. At a high level, Theorems~\ref{mthm:Hindman} and \ref{mthm:Ramsey} assert that ``Ramsey-like'' ideals such as $\calH$ and $\Ram$ look very different from the canonical summable ideal $\Sumn$. Given the coarseness of the Gamified Kat\v{e}tov order, this difference should be seen as significant. In fact, our proofs isolate specific structural properties to which $\glt$ is sensitive:
\begin{itemize}
	\item Earlier results (Theorem~\ref{mthm:Pwfin}, \cite[Theorem C]{KiNg26}) examined ideals defined by imposing specific asymptotic constraints on infinite subsets of $\w$. By contrast, the Hindman and Ramsey ideals are defined by forbidding certain kinds of highly structured patterns in infinite sets, which have no {\em a priori} connection to asymptotic growth. Section~\ref{sec:pos-dif-ideal} makes this distinction precise, and leverages it to show $\calH,\Ram\not\glt\Sumn$.

	\smallskip 
	
	\item The proof of Theorem~\ref{mthm:Pwfin} relies heavily on the pigeonhole principle to organise the relevant combinatorics. This invites the question of whether Ramsey-type theorems can also be used to separate ideal classes in $\glt$. Sections~\ref{sec:Hindman} and~\ref{sec:Ram} answer this affirmatively, using the Canonical Hindman and Ramsey theorems to prove $\Sumn\not\glt\calH,\Ram$.
\end{itemize}
Looked at in this light, our results highlight some suggestive connections with Ramsey theory that may point the way to future theorems. Theorems~\ref{mthm:Hindman} and \ref{mthm:Ramsey} suggest that partition theorems are effective tools for locating points of separation within the Gamified Kat\v{e}tov order. What other Ramsey-theoretic constructions or principles admit meaningful applications in our setting? 
\tableofcontents
 
 \section{Conventions and Preliminaries}
 
 \subsection{Notation and Conventions} 
 
 \begin{convention} 	$\w$ denotes the set of natural numbers, and $\Pw$ the powerset of natural numbers.
 \end{convention}
 
 \begin{convention}[Strings] $\ww$ denotes the set of infinite strings in $\w$, and $\lww$ the set of finite strings. For any string $x=(x_0,x_1,\dots)$, denote its {\em prefix of length $n$} as $x\rstr n:=(x_0,\dots,x_b)$. In particular, $\lww$ has a natural partial order $\preceq$ where $\tau\preceq \sigma$ just in case $\tau$ is a prefix of $\sigma$. Given any $\sigma,\tau\in\lww$, denote their concatenation as $\sigma\fr \tau$. The empty string is denoted $\o$.  
 \end{convention}
 
 \begin{convention}[Tree] A {\em tree} is a subset $T\subseteq \lww$ whereby:
 	\begin{enumerate}[label=(\alph*)]
 		\item $T$ contains the empty string $\o$, the {\em root} of $T$. 
 		\item $T$ is downward-closed under initial segments: if $\tau \preceq \sigma \in T$, then $\tau \in T$.
 	\end{enumerate}	
 We often call an element in $T$ a {\em node}.  An {\em infinite path through $T$} is a string $p \in \ww$ such that $p \rstr n \in T$ for all $n$. We write $[T]$ for the set of all infinite paths through $T$. 
 \end{convention}
 
 \begin{convention}[Partial Functions]
 	The notation $f\pcolon X\to Y$ denotes $f$ is a partial function from $X$ to $Y$, and the notation $f\pcolon X\tto Y$ denotes $f$ is a partial multi-valued function from $X$ to $Y$, i.e., $f\pcolon X\to\mathcal{P}(Y)$.
 	We often write $f(x)\dar$ if $x\in \dom(f)$ and $f(x)\uar$ otherwise;
 \end{convention}
 
 \begin{convention}[Partial Continuous Functions]\label{con:pcf} 	 
 	A {\em partial continuous function} $$\Phi\pcolon \ww \to \w$$
 	is determined by a partial function $\varphi\pcolon \lww \to \w$ satisfying:
 	\begin{enumerate}[label=(\alph*)]
 		\item {\em Extension.}
 		$$p \in \dom(\Phi) \,\, \text{and} \,\, \Phi(p) = c \iff \exists n\in\w \,\,\text{s.t.}\, p \rstr n \in \dom(\varphi) \,\,\text{and}\,\, \varphi(p \rstr n) = c.$$
 		\item {\em Coherence.} If $\sigma,\tau\in\dom(\varphi)$ and $\sigma\preceq \tau$, then $\varphi(\sigma)=\varphi(\tau).$ 
 	\end{enumerate}
 \end{convention}

\begin{convention}\label{conv:enum} Throughout this paper, we fix two different enumerations.
\begin{enumerate}[label=(\roman*)]
	\item {\em Ordered pairs.} Fix a computable bijection 
	$$	\lranglet{-}{-}\colon \w\times \w \simeq  \w\,,$$
	which enumerates by diagonals $(i,b)$ of constant sum $i+b=n$, increasing in $b$. In other words:
	\begin{itemize}
		\item Sum 0: $(0,0)$.
		\item Sum 1: $(1,0)$, $(0,1)$.
		\item Sum 2: $(2,0)$, $(1,1)$, $(0,2)$.
	\end{itemize}
etc. This particular enumeration has the following monotonicity property:
\smallskip
\begin{itemize}
	\item[] \begin{itemize}
		\item[$(\dagger)$]  For each fixed $i$: $b'<b \iff \lranglet{i}{b'}<\lranglet{i}{b}$.
	\end{itemize}
\end{itemize}
This will be helpful in streamlining the diagonalisation arguments of Section~\ref{sec:other-ideals}.
\smallskip
	\item {\em Unordered pairs.} Fix a computable bijection 
	$$\code{-}\colon [\w]^2\simeq \w\,$$
	where $[\w]^2$ denotes the set of distinct unordered pairs of elements in $\w$; here {\em distinct} means we exclude pairs of the form $\{a,a\}$. No additional properties on $\code{-}$ are imposed.
\end{enumerate}	
\end{convention}

 \begin{convention}[Filters and Ideals]\label{con:filters} Any collection of subsets $\calA\subseteq \Pw$ is called a {\em subset family}, and will be denoted in caligraphic font. In particular:
 	\begin{itemize}
 		\item An {\em upper set} $\calU \subseteq \Pw$ is a subset family upward closed under $\subseteq$, i.e. if $A \in \calU$ and $A \subseteq B$, then $B \in \calU$. 
 		A lower set is defined dually.
 		\item  For a lower set $\calI$, its {\em dual upper set} is $\calI^*:= \{A \subseteq \w : \w \setminus A \in \calI\}$; the dual lower set is defined analogously. To improve readability, we sometimes use parentheses and write $(\calI)^*$.
 		\item A {\em filter} is an upper set closed under finite intersections; dually, an {\em ideal} is a lower set closed under finite unions. 
 	\end{itemize}	
 \end{convention}

 \begin{example} $\Fin$ denotes the ideal of finite subsets of $\w$; $\Fin^*$ denotes the set of all cofinite subsets. 
 \end{example}

 \begin{definition}\label{def:finite-error} This paper will be interested in the combinatorics of sets and functions, up to some finite error. Some key definitions:
\begin{itemize}
	\item For $P,Q\subseteq\omega$, we write $P \subseteq^* Q$ whenever $|P\setminus Q|<\infty$ ; informally, $P$ is {\em almost included} in $Q$.
	\item If $P\subseteq^* Q$ and $Q\subseteq^* P$, we write $P=^*Q$ ; informally, $P$ is {\em almost equal} to $Q$.
	\item For functions $f,g\colon \omega\to [0,\infty)$, we write $f\leq^* g$ if $f(n)\leq g(n)$ for all but finitely many $n$.
\end{itemize}
These notions naturally lead to the quotient structure
\[
(\mathcal P(\omega)/\mathrm{Fin},\,\subseteq^*),
\]
where $\mathcal P(\omega)/\mathrm{Fin}$ denotes the powerset of $\omega$ modulo $\mathrm{Fin}$, the finite set ideal. The relation $\subseteq^*$ induces the natural partial order on this quotient.
 \end{definition}

 \subsection{The Gamified Kat\v{e}tov order} The Gamified Kat\v{e}tov order is a game-theoretic refinement of the classical Kat\v{e}tov order, introduced by the authors in \cite[\S 3]{KiNg26}. We start by reviewing the usual definition of Kat\v{e}tov reduction. Given any $\calU,\calV\subseteq\Pw$ upper sets, we write
 	$$\calU\lk\calV \,$$
 just in case	
 	$$\,\exists \,h\colon\w\to\w\,\text{such that}\, A\in\calU \implies h^{-1}[A]\in\calV\,.$$
Informally, $\calU,\calV$ each represent two abstract notions of largeness, and $\calU\lk\calV$ means that every $\calU$-large set $A$ can be uniformly witnessed to be $\calV$-large by a single fixed map $h\colon\w\to\w$. A useful reformulation, exploiting the fact that  $\calU,\calV$ are upper sets, is that $\calU\lk\calV$ if and only if 
$$\,\exists \,h\colon\w\to\w\,\text{such that}\, A\in\calU \implies h[B]\subseteq A\,\,\text{for some}\, B\in\calV\,;$$
see \cite[Observation 3.7]{KiNg26}. In plainer terms: instead of pulling $A$ back along $h$, one may equivalently push forward some $\calV$-large set $B$ into $A$. 

\smallskip

The Gamified Kat\v{e}tov order generalises this picture on two levels. 
\begin{enumerate}
	\item It replaces the witnessing map $h\colon\w\to\w$ with a partial map on finite strings $$\varphi\pcolon \lww\to\w.$$ Thinking of a finite string as a sequence of values revealed one at a time suggests a sequential, in particular a game-like, interpretation of the construction (which we do not formalise here; for details, see \cite[\S 3.4]{KiNg26}).
	\item The single set $B\in\calV$ is replaced by a tree whose branching is controlled by $\calV$. From the game-theoretic perspective, the tree is a device that supplies a  $B_n\in\calV$ at every stage $n$ of the game.
\end{enumerate}
In this way, a single global witness $B$ is replaced by a family of stage-by-stage witnesses organised into a tree, while the map $\varphi$ assigns an output to each resulting sequence. This framework is formalised by the following key definitions.

\begin{definition}\label{def:game-tree} Let $\calV\subseteq\Pw$ be an upper set, and $T\subseteq\lww$ a tree. A node $\sigma\in  T$ is called {\em $\calV$-branching} if the set of immediate successors $\{n\in\w\mid \sigma\fr n\in T\}$ is in $\calV$. A tree $T$ is {\em $\calV$-branching} just in case: 
	\begin{enumerate}[label=(\alph*)]
		\item $T$ is non-empty; and
		\item For any $\sigma\in T$, $\sigma$ is $\calV$-branching.
	\end{enumerate}
	We emphasise Condition (b) includes the case when $\sigma=\o$, the root of $T$.
\end{definition}

\begin{definition}[Gamified Kat\v{e}tov order]\label{def:GTK} Let $\calU,\calV\subseteq\Pw$ be upper sets. We write
	$$\calU\glt\calV$$
just in case there exists a partial continuous function $$\Phi\pcolon \ww\to\w$$
	such that, for any $A\in\calU$, there exists some $\calV$-branching tree $T$ satisfying
	$$	[T]\subseteq \dom(\Phi)
	\qquad\text{and}\qquad
	\Phi[T]\subseteq A.$$
Equivalently, $\Phi(p)\in A$ for any infinite path $p$ through $T$.
\end{definition}

\begin{convention} We typically say $\Phi$ {\em witnesses} the reduction $\calU\glt\calV$ whenever it satisfies the conditions of Definition~\ref{def:GTK}, and call any such $\Phi$ a {\em witness}.
\end{convention}

\begin{convention} By dualising, one obtains a corresponding notion on lower sets: for $\calH,\calI\subseteq\Pw$,
	$$\calH \glt \calI
	\quad\text{iff}\quad
	\calH^* \glt \calI^*.$$
	
\end{convention}

As alluded to before, the original motivation behind the Gamified Kat\v{e}tov order came not from set theory, but category theory. More precisely, it arises from the study of the  Effective Topos $\Eff$, introduced by Hyland \cite{HylandEffective}, which provides a category-theoretic framework for computability.

Every (elementary) topos carries a natural partial order on its Lawvere--Tierney topologies (the Lawvere-Tierney order). In well-behaved cases, such as Grothendieck toposes, the $\clt$-order admits several useful concrete descriptions, and topos theorists have developed a wide range of tools to analyse them. By contrast, the $\clt$-order in $\Eff$ is far less understood. While we know it encodes computability-theoretic information --- for instance, it admits an effective embedding of the Turing degrees \cite[Theorem 17.2]{HylandEffective} --- a systematic analysis of its structure has proved difficult.

\smallskip

A key move in \cite{KiNg26} is to identify the combinatorial mechanism underlying these category-theoretic structures. Re-examining work of Lee-van Oosten \cite[\S 2]{LvO13}, we observed that the $\clt$-order in $\Eff$ induces a preorder on upper sets in $\Pw$. This set up the first major theorem of our paper \cite{KiNg26}:

 \begin{theorem}[{{\cite[Theorem A]{KiNg26}}}]\label{thm:main-thm}  The Gamified Kat\v{e}tov order ($\glt$) defines a preorder on upper sets over $\w$. Moreover:
 	\begin{enumerate}[label=(\roman*)]
 		\item The \emph{Gamified Kat\v{e}tov order} is equivalent to the {\em Kat\v{e}tov order} closed under {\em well-founded iterations of Fubini powers}. In particular, it is strictly coarser than the Kat\v{e}tov order.
 		\item The {\em Gamified Kat\v{e}tov order} admits an explicit game-theoretic description, justifying its name.
 		\item The {\em computable Gamified Kat\v{e}tov order} is equivalent to the {\em $\clt$-order} on upper sets over $\w$.
 	\end{enumerate}
\end{theorem}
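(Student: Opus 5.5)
The statement bundles together a preorder claim and three structural facts (i)--(iii). I would prove the preorder claim directly from Definition~\ref{def:GTK}, since it is elementary and fixes the composition technique. For (i)--(iii) I would follow the outline of \cite{KiNg26}.

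\emph{Reflexivity.} Let $\Phi(p):=p(0)$, which is continuous via $\varphi(\sigma):=\sigma(0)$ for $\sigma\neq\o$. Given $A\in\calU$, take $T:=\{\sigma\in\lww : \sigma(i)\in A \text{ for all } i<|\sigma|\}$. Every node of $T$ has $A$ as its set of immediate successors, so $T$ is $\calU$-branching. Every path $p\in[T]$ satisfies $\Phi(p)=p(0)\in A$.

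\emph{Transitivity.} Suppose $\Phi$ witnesses $\calU\glt\calV$ and $\Psi$ (given by $\psi$) witnesses $\calV\glt\calW$. I would build a witness $\Theta$ for $\calU\glt\calW$ by block concatenation.
\begin{itemize}
\item[] \emph{Parsing a string.} Read an input string $\rho$ from the left and cut off the shortest prefix $\rho_0$ with $\psi(\rho_0)\dar$, recording $n_0:=\psi(\rho_0)$. Then find the shortest next block $\rho_1$ with $\psi(\rho_1)\dar$, recording $n_1$, and so on. By coherence of $\psi$ this parsing is well defined and monotone in $\rho$.
\item[] \emph{Definition of $\Theta$.} Set $\Theta(p):=\Phi(n_0n_1n_2\cdots)$ whenever infinitely many blocks are produced and the right-hand side converges. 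This is continuous, because finitely many blocks determine a prefix of $n_0n_1\cdots$, and $\Phi$ converges on a finite prefix. Crucially, $\Theta$ depends only on $\Phi$ and $\Psi$, not on $A$.
\end{itemize}
Given $A\in\calU$, choose a $\calV$-branching tree $T$ with $[T]\subseteq\dom(\Phi)$ and $\Phi[T]\subseteq A$. For each $\sigma\in T$, the successor set $S_\sigma$ lies in $\calV$. Applying the second reduction to $S_\sigma$ gives a $\calW$-branching tree $R_\sigma$ with $[R_\sigma]\subseteq\dom(\Psi)$ and $\Psi[R_\sigma]\subseteq S_\sigma$.

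Let $R$ consist of all strings $\rho_0\fr\rho_1\fr\cdots\fr\rho_{k-1}\fr\tau$ satisfying three conditions:
\begin{itemize}
\item[] each $\rho_j$ is a minimal convergence string in $R_{(n_0,\dots,n_{j-1})}$;
\item[] $n_j=\psi(\rho_j)$;
\item[] $\tau\in R_{(n_0,\dots,n_{k-1})}$ and $\tau$ has no proper prefix on which $\psi$ converges.
\end{itemize}
Then $R$ is downward closed and nonempty. Each node of $R$ is $\calW$-branching, since inside the current block it is a node of a $\calW$-branching tree $R_\sigma$. Along every path of $R$, each block terminates, because $[R_\sigma]\subseteq\dom(\Psi)$. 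Since $n_j\in S_{(n_0,\dots,n_{j-1})}$, the sequence $(n_j)$ is a path of $T$, so $\Theta(p)$ converges and lies in $A$.

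The one delicate point in this step is the minimality/coherence bookkeeping. It is what makes both the block decomposition and the downward closure of $R$ unambiguous.

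\emph{Item (i).} For ``$\Rightarrow$'', I would show that a $\calV$-branching tree of well-founded rank is exactly a Fubini iterate $\calV^{\alpha}$. I would then extract a Kat\v{e}tov map by sending each leaf of the truncation of $T$ at the convergence points of $\varphi$ to its $\varphi$-value; this convergence-point truncation is well founded because $[T]\subseteq\dom(\Phi)$. For ``$\Leftarrow$'', Fubini powers are handled by the composition above. Strictness then follows from a standard example, e.g.\ $\Fin^*$ versus $\Fin^*\otimes\Fin^*$, which are Kat\v{e}tov-inequivalent but $\glt$-equivalent.

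\emph{Item (ii).} I would read off a game in which Player~I plays sets in $\calV$, Player~II picks elements from them, and II wins once $\varphi$ outputs a value in $A$.

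\emph{Item (iii).} This is the main obstacle: it requires translating Lee--van Oosten's description \cite{LvO13} of topologies in $\Eff$ into computable versions of the trees above, essentially realizability bookkeeping. I would defer the details to \cite{KiNg26}.
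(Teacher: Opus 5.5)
The paper does not prove this statement. It is quoted from \cite[Theorem A]{KiNg26} and used as a black box, so what can be assessed is what you actually establish.

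\textbf{Preorder.} Your reflexivity argument and the block-concatenation proof of transitivity are sound, up to three small repairs.
\begin{enumerate}
\item Define $\Theta$ through a partial function on finite strings, $\rho\mapsto\varphi(n_0\cdots n_{k-1})$, computed from the complete blocks of $\rho$. Your clause ``infinitely many blocks are produced'' makes $\dom(\Theta)$ non-open, so as written $\Theta$ is not of the form required by Convention~\ref{con:pcf}.
\item Treat separately the case where $\psi$ converges on the empty string. All blocks are then empty and your $R$ is just the root; but then $\Psi$ is constant and the case is trivial.
\item The step $n_j\in S_{(n_0,\dots,n_{j-1})}$ uses that $\rho_j$ extends to an infinite path of $R_{(n_0,\dots,n_{j-1})}$, i.e.\ that $\emptyset\notin\mathcal{W}$. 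Otherwise $\mathcal{W}=\Pw$, and everything reduces to it trivially.
\end{enumerate}

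\textbf{The genuine gap: forward direction of (i).} A Kat\v{e}tov reduction needs one map on one index set that works for every $A\in\calU$ simultaneously. You build it from the convergence-point truncation of the witness tree $T$, which is chosen separately for each $A$. Its leaves and its rank therefore vary with $A$, and nothing in your sketch bounds them.

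The $A$-independent candidate is $\lww$ truncated at the convergence points of $\varphi$, with $h(\sigma):=\varphi(\sigma)$ on its leaves. But this tree is in general \emph{ill-founded}, because $\Phi$ is only partial. The hypothesis $[T]\subseteq\dom(\Phi)$ only makes the part lying inside each individual $T$ well-founded.

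You need an extra uniformisation step. Two options:
\begin{itemize}
\item When $\calV$ is a filter, rank the nodes of the canonical tree by the game ``force $\varphi$ to converge''. This rank stabilises at a countable ordinal because $\lww$ is countable. Then intersect every $T_A$ with the single rank-decreasing subtree; the result is still $\calV$-branching precisely because $\calV$ is closed under finite intersections.
\item Alternatively, index the iterate by the possibly ill-founded canonical tree, and require only the witnessing subtrees to be well-founded.
\end{itemize}
You would have to check which of these matches the notion of ``well-founded iteration'' in \cite{KiNg26}.

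Two related points in (i):
\begin{itemize}
\item The sentence ``a $\calV$-branching tree of well-founded rank is exactly a Fubini iterate'' conflates a tree with an upper set.
\item The converse direction is not handled by composition. You show $\calV^{W}\glt\calV$ directly, by reading a path until it reaches a leaf of $W$, and then combine transitivity with the fact that $\lk$ implies $\glt$.
\end{itemize}

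\textbf{Item (ii): roles reversed.} A $\calV$-branching tree $T$ with $[T]\subseteq\dom(\Phi)$ and $\Phi[T]\subseteq A$ is exactly a winning strategy for the player who \emph{plays the sets} in $\calV$; the opponent then picks elements from them. So it is the set-player who must win when $\varphi$ outputs a value in $A$. A winning strategy for the element-picker characterises a different relation.

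\textbf{Item (iii).} You defer this to \cite{KiNg26}, exactly as the paper does.
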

 
 \begin{discussion}[Coarseness] For further details, we refer the reader to \cite{KiNg26}. The main takeaway here is that Theorem~\ref{thm:main-thm} locates two distinct sources of coarseness within the Gamified Kat\v{e}tov order.
 \begin{itemize}
 	\item By (i), the order is closed under well-founded Fubini iterations. In particular, $\calU \elt \calU \otimes \calU$ for every upper set $\calU$ -- which is emphatically false for the classical Kat\v{e}tov order. Indeed, many separation results for the Kat\v{e}tov order on ideals rely on the strict inequality $\Fin<_{\mathrm{K}}\Fin\otimes \Fin$, and so this collapse creates serious technical obstacles in the gamified setting; see Discussion~\ref{dis:pos-dif-Fin}.
 	\item By (iii), the $\clt$-order is equivalent to the Gamified Kat\v{e}tov order once we restrict to computable witnesses $\Phi$. Hence, the $\glt$-order is also coarser than the original $\clt$-order -- in particular, $\calU\not\glt\calV$ implies $\calU\not\clt\calV$. Showing that $\glt$ is in fact {\em strictly} coarser takes work, and leads to other subtleties -- see Corollary~\ref{cor:no-Denz} and Problem~\ref{prob:Pw-LT}.
 \end{itemize}

 \end{discussion}

 \subsection{Separation Lemmas} This subsection explains the separation technique developed in \cite[\S 5]{KiNg26}. Throughout, we let $\calU,\calV\subseteq\Pw$ be upper sets. To begin, recall that a set $A\subseteq \w$ is called  {\em $\calU$-null} just in case $\w\setminus A\in\calU$; otherwise, $A$ is called {\em $\calU$-positive}. Informally, a set $A$ is $\calU$-positive if the upper set $\calU$ regards $A$ as being {\em non-negligible} in size -- notice this does not always imply $A\in\calU$. The following example, though elementary, will be important for us:

\begin{example} Let $\calI$ be an ideal, with dual filter $\calI^*$. Then, any $A\in\calI$ is $\calI^*$-null and any $A\notin\calI$ is $\calI^*$-positive. 
\end{example}

 \begin{observation}\label{obs:positive-null} Let $\calU\subseteq\Pw$ be an upper set. If $A$ is $\calU$-positive and $B\in\calU$, then $A\cap B\neq\emptyset.$
 \end{observation}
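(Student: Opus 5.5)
The claim is Observation~\ref{obs:positive-null}: if $A$ is $\calU$-positive and $B\in\calU$, then $A\cap B\neq\emptyset$. I will argue by contraposition, using only the definition of positivity and the fact that $\calU$ is an upper set.

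Suppose, towards a contradiction, that $A\cap B=\emptyset$. Then every element of $B$ lies outside $A$, so
$$B\subseteq \w\setminus A.$$
Since $B\in\calU$ and $\calU$ is upward closed under $\subseteq$ (Convention~\ref{con:filters}), it follows that $\w\setminus A\in\calU$.

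By definition, this says exactly that $A$ is $\calU$-null. That contradicts the assumption that $A$ is $\calU$-positive, and hence $A\cap B\neq\emptyset$.

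There is no real obstacle here. The only ingredient is upward closure, so the argument applies to arbitrary upper sets and needs no closure under intersections. This matters because the later separation lemmas apply it to upper sets that need not be filters.
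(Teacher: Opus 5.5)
Your proof is correct and matches the paper's argument exactly: assume $A\cap B=\emptyset$, deduce $B\subseteq\omega\setminus A$, use upward closure of $\calU$ to get $\omega\setminus A\in\calU$, and conclude that $A$ is $\calU$-null, contradicting positivity. You announce contraposition but actually argue by contradiction, as the paper does; this makes no difference to the proof.
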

 \begin{proof} If $A\cap B=\emptyset$, then $B\subseteq \w\setminus A$. Since $B\in\calU$ and $\calU$ is upward closed, this implies $\w\setminus A\in\calU$. In other words, $A$ is $\calU$-null, a contradiction.
 \end{proof}

 Let us revisit Definition~\ref{def:GTK}. A witness $\Phi$ to $\calU\glt\calV$ is, in general, only a \emph{partial} function on $\lww$. For the purposes of separation, it is useful to extend $\Phi$ to a total map on $\lww$ in a way that reflects the largeness notion encoded by $\calU$. Here is the basic idea.  For any node in $\lww$, whenever $\calU$ identifies a non-negligible set of successors carrying a common value, we assign that value to the node itself; otherwise, we assign the node the value $\bot$. In this way, $\calU$ acts as a notion of “majority” guiding how undefined values are filled in. This leads to the technical definition of a {\em labelling function}.
 
 \begin{definition}[Labelling Function]\label{def:label} Let $\Phi\pcolon\ww\to\w$ be a partial continuous function, and $\calU$ an upper set. Then:
 	\begin{enumerate}[label=(\roman*)]
 		\item Define the {\em canonical tree} $T_\Phi$ as the $\preceq$-downward closure of $\{\sigma\in\lww \mid \Phi(\sigma)\dar\}$.
 		\item Define a {\em labelling function} 
 		$$\nu\colon \lww\to\w\cup\{\bot\}$$
 		recursively as follows:
 		\begin{enumerate}[label=(\arabic*)]
 			\item If $\Phi(\sigma)\dar$, then set $\nu(\sigma)=\Phi(\sigma)$.
 			\item If $\sigma\notin T_\Phi$, then set $\nu(\sigma)=\bot$.
 		\end{enumerate}

 		\begin{enumerate}[label=(\arabic*)]
 			\setcounter{enumii}{2} 
 			\item If there exists $c\in \w$ such that $\{n\in\w\mid \nu(\sigma\fr n)=c\}$  is $\calU$-positive for some $c\in\w$, then set $\nu(\sigma)=c$. If there are multiple options, choose the least $c$.
 			\item Otherwise, set $\nu(\sigma)=\bot$.
 		\end{enumerate}
 		\item For each finite string $\sigma$, its {\em $\nu$-label} is the value $\nu(\sigma)\in\w\cup\{\bot\}$.
 		\item A node $\sigma\in\lww$ is {\em $\nu$-critical} if $\nu(\sigma)=\bot$ and $\{n\mid \nu (\sigma\fr n)=\bot\}$ is $\calU$-null; a {\em $\nu$-critical successor} is a successor of a critical node.
 	\end{enumerate}	
 \end{definition}

  \begin{convention} Let $\Phi$ be a partial continuous function and $\calI$ an ideal. To emphasise dependence, we sometimes write $\nu_\Phi^{\calI}$ to mean a labelling function 
 	defined using the dual filter of $\calI$. However, whenever the context is clear, we shall suppress the indices for readability and simply write $\nu$. Similarly, we typically say that a node is ``critical'' or speak of its ``label'', instead of writing out ``$\nu$-critical'' or ``$\nu$-label'' in full.
 \end{convention}

 The following figure illustrates the labelling process.
 
\begin{figure}[h!]
	\includegraphics[width=12cm]{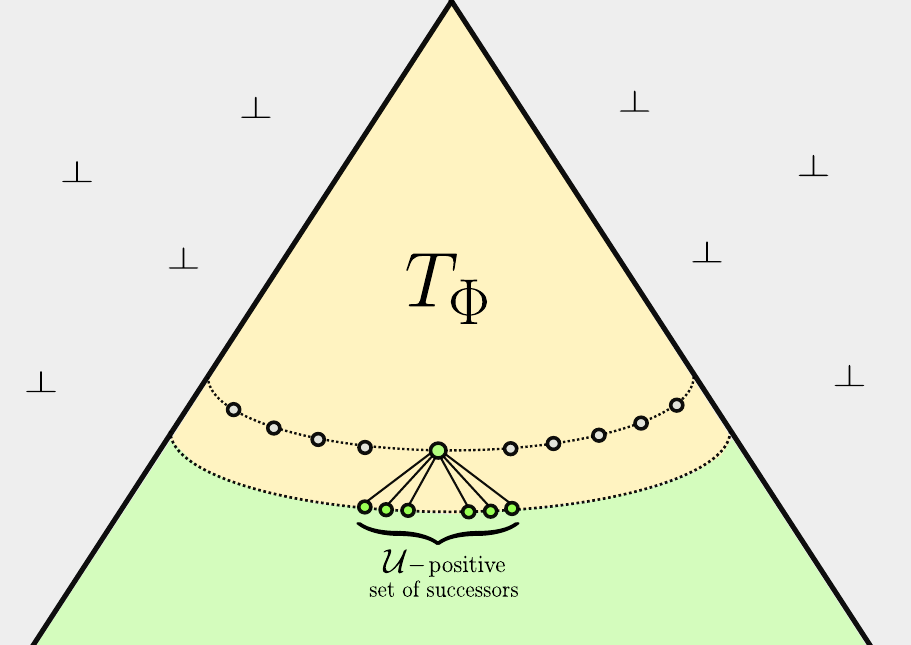}
	\caption{The grey shaded region lies outside the canonical tree $T_\Phi$; here, all nodes are labelled $\bot$. Within $T_\Phi$, the values assigned by $\Phi$ propagate upward along $\calU$-positive sets of successors with the same label. Nodes where no such agreement occurs are also labelled $\bot$. }
	\label{fig:labeltree}
\end{figure}

 Finally, we explain the terminology ``canonical tree''. Suppose $\Phi$ witnesses $\calU\glt\calV$. By definition, for each $A\in\calU$ there exists a $\calV$-branching tree $T_A$ such that $\Phi[T_A]\subseteq A$.  Now suppose $T_\Phi$ is the canonical tree associated to $(\Phi,\calV)$, with labelling function $\nu:=\nu^\calV_\Phi$. By construction, 
$\nu$ is a total function extending the original $\Phi$, and $T_\Phi$ is the $\preceq$-downward closure of $\dom (\Phi)$.
 Hence,
  $$T_A\subseteq T_\Phi\qquad\text{and}\qquad \nu (p)=\Phi(p)\,\,\text{for all}\, p\in [T_A]\,.$$ 
 In this sense, $T_\Phi$ is a universal object: it simultaneously contains all potential witness trees $T_A$ for $\calU\glt\calV$. This means its structure imposes specific constraints on how values of $\Phi$ may be distributed along any $\calV$-branching tree. The following two lemmas identify two such constraints, which will play a crucial role in our analysis.

 \begin{lemma}[{{\cite[Lemma 5.10]{KiNg26}}}]\label{lem:separation-1} Let $T$ be a $\calV$-branching tree.
 	If a node $\sigma\in T$ is $\nu^\calV_\Phi$-labeled by $c\in\w$, then there exists an infinite path $p$ through $T$ such that $\Phi(p)=c$.
 \end{lemma}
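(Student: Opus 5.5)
The plan is to argue by induction on the \emph{rank} at which a node receives its label. Clauses (1)--(4) of Definition~\ref{def:label} are not a recursion on string length. Clause (3) looks upward, at the successors of $\sigma$. So I read $\nu$ as the least fixed point of a monotone inductive definition, built in transfinite stages:
\begin{itemize}
\item at stage $0$, every $\sigma$ with $\Phi(\sigma)\dar$ gets the label $\Phi(\sigma)$;
\item at stage $\alpha$, a still-unlabelled $\sigma\in T_\Phi$ gets the least $c$ such that $\{n\mid \sigma\fr n \text{ was labelled } c \text{ at some stage }<\alpha\}$ is $\calV$-positive;
\item every node left unlabelled when this stabilises gets $\bot$.
\end{itemize}
Each $\sigma$ with $\nu(\sigma)=c\in\w$ then has a well-defined ordinal rank $\mathrm{rk}(\sigma)$, the stage at which it was labelled. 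I also note that if $\emptyset\in\calV$ the lemma fails (the tree $\{\o\}$ is $\calV$-branching with no infinite paths), so I assume $\calV$ is proper, i.e.\ $\emptyset\notin\calV$. Making this stage-wise reading of $\nu$ precise, and checking it agrees with the intended definition, is the step I expect to need the most care. The combinatorics afterwards is short.

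First I record a preliminary fact: every node of a $\calV$-branching tree $T$ extends to an infinite path through $T$. Since $\calV$ is upward closed and $\emptyset\notin\calV$, every set in $\calV$ is nonempty. So each node of $T$ has at least one successor in $T$, and a path can be built by repeatedly choosing one.

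Next I prove, by induction on $\mathrm{rk}(\sigma)$, that for every $\sigma\in T$ with $\nu(\sigma)=c\in\w$ there is $p\in[T]$ extending $\sigma$ with $\Phi(p)=c$.
\begin{itemize}
\item \emph{Base case} ($\mathrm{rk}(\sigma)=0$): here $\Phi(\sigma)\dar=c$, so the underlying $\varphi$ of Convention~\ref{con:pcf} gives $\varphi(\sigma')=c$ for some $\sigma'\preceq\sigma$. Let $p\in[T]$ extend $\sigma$, using the preliminary fact. The Extension and Coherence conditions then give $p\in\dom(\Phi)$ and $\Phi(p)=c$.
\item \emph{Inductive step} ($\mathrm{rk}(\sigma)=\alpha>0$): the set $S=\{n\mid \nu(\sigma\fr n)=c,\ \mathrm{rk}(\sigma\fr n)<\alpha\}$ is $\calV$-positive. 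Since $T$ is $\calV$-branching, the successor set $B=\{n\mid\sigma\fr n\in T\}$ lies in $\calV$. By Observation~\ref{obs:positive-null}, $S\cap B\neq\emptyset$. Choose $n\in S\cap B$. Then $\sigma\fr n\in T$ carries the label $c$ with smaller rank, so the induction hypothesis gives a path $p\in[T]$ extending $\sigma\fr n$, and hence $\sigma$, with $\Phi(p)=c$.
\end{itemize}

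The lemma is the special case of this claim applied to the given node $\sigma$. The only real input is Observation~\ref{obs:positive-null}: it lets the "majority" successors witnessing the label be met by the $\calV$-large successor set of the tree $T$.
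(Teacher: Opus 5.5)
Your proof is correct. It is the standard rank-induction argument for this lemma, which the present paper does not reprove but imports from \cite[Lemma 5.10]{KiNg26}: induct on the stage at which $\sigma$ acquires its label, use Observation~\ref{obs:positive-null} to meet the $\calV$-positive set of earlier-labelled $c$-successors with the $\calV$-large successor set of $T$, and at stage $0$ extend the halting node to any path of $T$.

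Both of your caveats are genuinely needed. The lemma fails for the degenerate $\calV=\Pw$, the dual of $\mathrm{Sum}_\w$, so $\emptyset\notin\calV$ is required. The stage-wise reading of Definition~\ref{def:label} is also necessary rather than merely convenient, because a non-least solution of clauses (1)--(4) can carry self-supporting labels. For example, take $\calV=\Fin^*$ and let $\Phi$ output $7$ exactly on strings containing an odd entry; labelling every all-even string by $3$ then satisfies (1)--(4), yet $\Phi$ never outputs $3$.
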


 \begin{lemma}[{{\cite[Lemma 5.11]{KiNg26}}}]\label{lem:separation-2} Let $T$ be a $\calV$-branching tree. If $[T]\subseteq\dom(\Phi)$ and its root is labeled by $\bot$, then $T$ has a $\nu^\calV_\Phi$-critical node.
 \end{lemma}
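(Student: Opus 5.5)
The plan is to argue by contradiction, building an infinite path through $T$ whose prefixes are all labelled $\bot$. Such a path is incompatible with $[T]\subseteq\dom(\Phi)$. So suppose, towards a contradiction, that $T$ has no $\nu$-critical node, where $\nu:=\nu^\calV_\Phi$ and "positive/null" is always taken with respect to $\calV$.

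The key step is a one-step extension claim. If $\sigma\in T$ and $\nu(\sigma)=\bot$, then there is some $n$ with $\sigma\fr n\in T$ and $\nu(\sigma\fr n)=\bot$. To see this, note that $\sigma$ is not critical but has label $\bot$. By Definition~\ref{def:label}(iv), the set
$$B_\sigma:=\{n\in\w\mid \nu(\sigma\fr n)=\bot\}$$
is then $\calV$-positive. On the other hand, $T$ is $\calV$-branching, so the successor set $S_\sigma:=\{n\mid \sigma\fr n\in T\}$ lies in $\calV$. Observation~\ref{obs:positive-null} now gives $B_\sigma\cap S_\sigma\neq\emptyset$, and any $n$ in this intersection works.

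This argument covers every $\sigma$ with label $\bot$ uniformly, so no case split is needed. In particular, if $\sigma\notin T_\Phi$, then all successors of $\sigma$ are labelled $\bot$ by clause (2), so $B_\sigma=\w$. The same reasoning then applies, and whether $\w$ is positive is already accounted for in the non-criticality hypothesis.

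Next, start from the root, which lies in $T$ and has label $\bot$ by assumption. Apply the claim recursively, using dependent choice, to obtain nodes $\o=\sigma_0\prec\sigma_1\prec\sigma_2\prec\cdots$ in $T$, each of length one more than the last and each labelled $\bot$. Let $p$ be their union; then $p\in[T]$. Since $[T]\subseteq\dom(\Phi)$, the extension property in Convention~\ref{con:pcf} gives some $k$ with $\Phi(p\rstr k)\dar$. By clause (1) of Definition~\ref{def:label}, $\nu(p\rstr k)=\Phi(p\rstr k)\in\w$. This contradicts $\nu(\sigma_k)=\bot$, so $T$ must have a critical node.

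The only point requiring care is the one-step extension claim. There one has to check that non-criticality is exactly the statement that the $\bot$-labelled successors form a positive set, and that positivity is what is needed to meet the $\calV$-large set of successors inside $T$. Everything else is routine bookkeeping.
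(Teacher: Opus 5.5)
Your proof is correct and is essentially the standard argument behind this lemma, which the paper cites from \cite{KiNg26}: if no node of $T$ were critical, every $\bot$-labelled node of $T$ would have a $\calV$-positive set of $\bot$-labelled successors, which meets its $\calV$-large successor set in $T$ by Observation~\ref{obs:positive-null}, so you obtain a path $p\in[T]$ all of whose prefixes are labelled $\bot$, contradicting $[T]\subseteq\dom(\Phi)$ via clause (1) of Definition~\ref{def:label}. As a cosmetic remark, dependent choice is unnecessary, since at each step you can take the least admissible $n$.
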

 
 \section{Embedding $\Pw/\Fin$}\label{sec:Pw}

In this section, we prove Theorem~\ref{mthm:Pwfin}: the Gamified Kat\v{e}tov order embeds a copy of $\Pw/\Fin$. The core ideas are the same as in \cite[Theorem 1]{GGMA16} due to Meza-Alc\'{a}ntara, 
 which embeds a copy of $\Pw/\Fin$ into the Kat\v{e}tov order on ideals. However, due to the coarseness of $\glt$, more work is needed to separate the ideal classes. 
\medskip 

To begin, recall that an ideal $\calI$ is called {\em summable} just in case there exists a function $f\colon \w\to [0,\infty)$ satisfying $\lim_{m\to\infty}f(m)=0$, $\sum_{m\in\w}f(m)=\infty$ and 
$$\calI=\Sumf:=\bigg\{A\subseteq \w \,\,\bigg|\,\, \sum_{m\in A} f(m)<\infty \bigg\}\,.$$
The condition $\sum_{m\in\w}f(m)=\infty$ ensures that $\Sumf$ defines a {\em proper} ideal; otherwise, $\Sumf=\Pw$, which, per the conventions of this paper, we still regard as an ideal (albeit a degenerate one).
\smallskip

We now lay the groundwork for the proof. The general idea is to assign an ideal $\Sump$ to each $P\subseteq \w$, and then verify that 
\begin{equation}\label{eq:equiv-Pw-fin}
P\subseteq^* Q \iff \Sump\glt \Sumq\,.
\end{equation}
We use the same construction as in \cite{GGMA16}.

\begin{construction}\label{cons:Pw-fin} Proceed in stages.
	\begin{itemize}
		\item 	Fix a partition $\{I_{n}\}_{n\in\w}$ of $\w$ into finite intervals, and a descending sequence $\{r_n\}_{n\in\w}$ of positive rational numbers satisfying the conditions:\footnote{For an explicit construction of $\{I_{n}\}_{n\in\w}, \{r_n\}_{n\in\w}$, see \cite[Theorem 1]{GGMA16}. 
		}

		\begin{enumerate} 
			\item $|\bigcup_{j<n}I_{j}|\leq r_{n} |I_n|$ for all $n\geq 1$; 
			\item $|I_n|r_{n+1}\leq 2^{-n-1}$.
			\item $I_0=\{0\}$ and $r_0=1$.
		\end{enumerate}
	
	\smallskip
		\noindent Hereafter, we write $I_{<n}:=\bigcup_{j<n}I_{j}$, and $I_{\geq n}:=\bigcup_{j\geq n} I_j$.
		
		\smallskip
		\item For each set $S\subseteq\w$, define its {\em weight function} $w_S\colon\w\to [0,\infty)$ as follows:
		$$w_S(m)=\begin{cases}
		r_n \qquad\,\,\,\,\,\,\text{if}\,m\in I_n\,\text{and}\, n\notin S\\
		r_{n+1} \qquad\text{if}\,m\in I_n\,\text{and}\, n\in S\\
		\end{cases}\,\,.$$ 	
		This extends to a map $w_S\colon \Pw\to [0,\infty]$ whereby $w_S(A):=\sum_{n\in A}w_S(n)$.
		
		\smallskip
\item For each $S\subseteq\w$, define the ideal
$$\mathrm{Sum}_S:=\mathrm{Sum}_{w_S}\,.$$
	\end{itemize}
\end{construction}

The next two observations explain the conditions placed within Construction~\ref{cons:Pw-fin}. First and foremost, Conditions (1) and (2) guarantee the following:

\begin{observation}\label{obs:co-infinite} For every co-infinite $S\subseteq\w$, $\mathrm{Sum}_S$ defines a summable ideal. 
\end{observation}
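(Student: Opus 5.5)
The plan is to verify directly the two defining conditions of a summable ideal for the weight function $w_S$ of Construction~\ref{cons:Pw-fin}: namely $\lim_{m\to\infty} w_S(m)=0$ and $\sum_{m\in\w} w_S(m)=\infty$. Then $\mathrm{Sum}_S=\mathrm{Sum}_{w_S}$ is a summable ideal by definition. Both conditions should follow from Conditions (1)--(3) alone, and co-infiniteness of $S$ is used only for the divergence.

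\emph{Step 1 (weights tend to zero).} Since each $I_n$ is a nonempty finite interval, we have $|I_n|\geq 1$. Condition (2) then gives $r_{n+1}\leq |I_n|\,r_{n+1}\leq 2^{-n-1}$, so $r_n\to 0$. For $m\in I_n$, the value $w_S(m)$ is either $r_n$ or $r_{n+1}$, and both are at most $r_n$ because the sequence is descending. The intervals partition $\w$ into consecutive finite blocks, so $m\to\infty$ forces the block index $n(m)\to\infty$. Hence $w_S(m)\leq r_{n(m)}\to 0$.

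\emph{Step 2 (divergence).} Split the sum by blocks: $\sum_m w_S(m)=\sum_n \sum_{m\in I_n} w_S(m)$. For $n\notin S$ with $n\geq 1$, the block contributes $r_n|I_n|$. By Condition (1) this is at least $|I_{<n}|\geq |I_0|=1$, using Condition (3). Since $S$ is co-infinite, there are infinitely many such $n$, so the sum is infinite.

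I do not expect a real obstacle here. The only points needing care are to note that $|I_n|\geq 1$, which is what converts Condition (2) into $r_n\to 0$, and that $|I_{<n}|\geq 1$ for $n\geq 1$. As a side remark that will be useful later, the blocks with $n\in S$ contribute only $r_{n+1}|I_n|\leq 2^{-n-1}$, which is summable. This shows the dependence on co-infiniteness is sharp: if $S$ is cofinite, then $w_S$ has finite total sum and $\mathrm{Sum}_S=\Pw$.
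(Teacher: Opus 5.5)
Your proposal is correct and follows essentially the same route as the paper. Condition (2) gives $r_n\to 0$ and hence $w_S(m)\to 0$, and divergence comes from the infinitely many blocks $I_n$ with $n\notin S$, each contributing $r_n|I_n|\ge |I_{<n}|$ by Condition (1); your care with $|I_n|\ge 1$ and with the case $n=0$ (where Condition (1) is not asserted) just makes explicit small points the paper passes over.
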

\begin{proof} By Condition (2), it is clear $\lim_{n\to\infty} r_n = 0$, and so  $\lim_{m\to\infty}w_S(m)=0$. Next, notice
	$$\sum_{m\in\w} w_S(m) = \sum_{n\in\w} \sum_{m\in I_n} w_S(m)\geq \underbrace{\sum_{n\in\w\setminus S } r_n|I_n| \geq \sum_{n\in\w\setminus S }|I_{<n}| }_{\text{Condition (1)}} = \infty\,,$$
where the final RHS sum diverges since $S$ is assumed to be co-infinite. 
\end{proof}

%

Moreover, Condition (2) also yields the following degenerate case:

\begin{observation}\label{obs:w} If $S=\w$, then $\mathrm{Sum}_S=\Pw$. 
\end{observation}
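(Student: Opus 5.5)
The plan is to show directly that the total weight of $\w$ under $w_\w$ is finite. If $\sum_{m\in\w} w_\w(m)<\infty$, then every $A\subseteq\w$ satisfies $\sum_{m\in A} w_\w(m)<\infty$, so every subset lies in $\mathrm{Sum}_{w_\w}$. Hence $\mathrm{Sum}_\w=\Pw$.

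First I unwind the definition in Construction~\ref{cons:Pw-fin} with $S=\w$. Every $n$ lies in $S$, so for $m\in I_n$ we always take the second case and get $w_\w(m)=r_{n+1}$. Since $\{I_n\}_{n\in\w}$ partitions $\w$ into finite intervals, the total weight can be regrouped blockwise; all terms are nonnegative, so this regrouping is legitimate:
$$\sum_{m\in\w} w_\w(m)=\sum_{n\in\w}\sum_{m\in I_n} r_{n+1}=\sum_{n\in\w} |I_n|\,r_{n+1}.$$

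Next I apply Condition (2), which gives $|I_n|r_{n+1}\leq 2^{-n-1}$ for each $n$. Therefore
$$\sum_{m\in\w} w_\w(m)\leq\sum_{n\in\w}2^{-n-1}=1<\infty.$$
For an arbitrary $A\subseteq\w$, monotonicity of sums of nonnegative terms then gives $w_\w(A)\leq 1$, so $A\in\mathrm{Sum}_\w$. The reverse inclusion $\mathrm{Sum}_\w\subseteq\Pw$ is trivial.

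There is no real obstacle here. The only point needing care is the blockwise regrouping of the sum, which is justified by nonnegativity of all terms. It is also worth noting the contrast with Observation~\ref{obs:co-infinite}: the divergence there came from indices $n\notin S$ contributing $r_n|I_n|$, and such indices are absent when $S=\w$.
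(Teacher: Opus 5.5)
Your proposal is correct and follows the paper's own proof: with $S=\w$ every $m\in I_n$ gets weight $r_{n+1}$, Condition (2) bounds the total weight by $\sum_{n}2^{-n-1}<\infty$, and so every subset of $\w$ has finite $w_\w$-weight. Your added remarks, that regrouping by blocks is justified because the terms are nonnegative and that each $w_\w(A)\leq 1$, only make the same argument more explicit.
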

\begin{proof} Since $S=\w$, compute
	$$\sum_{m\in\w} w_S(m)=\underbrace{\sum_{n\in\w} |I_n| r_{n+1}\leq \sum_{n\in\w} 2^{-n-1}}_{\text{Condition (2)}} <\infty\,,$$
and so every subset of $\w$ has finite $w_S$-weight. Hence, conclude that $\mathrm{Sum}_S=\Pw$.
\end{proof}

More generally, if $S\subseteq\omega$ fails to be co-infinite, then $S$ must be
cofinite, and so $S=^*\omega$. Reviewing the definition of $\mathrm{Sum}_S$, this means all but finitely many intervals $I_n$ receive the smaller weight $r_{n+1}$, and so the total $w_S$-weight remains bounded. In which case, the ideal construction again collapses to yield $\mathrm{Sum}_S=\Pw$. As it turns out, this phenomenon follows from a more general fact: 

\begin{claim}\label{claim:Pw-first-direction} Suppose $P,Q\subseteq\w$ and $P\subseteq^* Q$. \underline{Then}, $\Sump\subseteq \Sumq$, and so $\Sump\glt\Sumq$.
\end{claim}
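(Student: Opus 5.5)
The plan is to first prove the inclusion $\Sump\subseteq\Sumq$ by comparing the weight functions $w_P$ and $w_Q$ pointwise, up to finitely many intervals. Since $P\subseteq^* Q$, there is $N$ with $P\setminus N\subseteq Q$. For $n\geq N$ and $m\in I_n$ we check that $w_Q(m)\leq w_P(m)$, by cases. If $n\in Q$, then $w_Q(m)=r_{n+1}\leq w_P(m)$, because $w_P(m)\in\{r_n,r_{n+1}\}$ and the sequence $\{r_n\}$ is descending. If $n\notin Q$, then $n\notin P$ since $n\geq N$, so $w_Q(m)=r_n=w_P(m)$. Hence $w_Q\leq w_P$ on $I_{\geq N}$.

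Since $I_{<N}$ is a finite set, for any $A\subseteq\w$ we obtain
$$w_Q(A)\leq w_Q(A\cap I_{<N})+w_P(A\cap I_{\geq N})\leq \sum_{m\in I_{<N}}w_Q(m)+w_P(A)\,.$$
So if $A\in\Sump$, meaning $w_P(A)<\infty$, then $w_Q(A)<\infty$, i.e. $A\in\Sumq$. This argument uses only the definitions and does not need $P$ or $Q$ to be co-infinite; the degenerate case $\Sumq=\Pw$ is covered automatically.

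The second step is to show that inclusion of ideals implies $\glt$, which is the only non-computational part. Dualising, $\Sump\subseteq\Sumq$ gives $(\Sump)^*\subseteq(\Sumq)^*$ on the dual filters. We take as witness $\Phi$ the partial continuous function that outputs the first entry of a path, determined by $\varphi(\sigma)=\sigma_0$ for every nonempty $\sigma$.

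Given $A\in(\Sump)^*$, we have $A\in(\Sumq)^*$, so we let $T$ be the tree of all strings whose entries all lie in $A$. Every node of $T$ has successor set $A\in(\Sumq)^*$, so $T$ is nonempty and $(\Sumq)^*$-branching. Moreover $[T]\subseteq\dom(\Phi)$, and $\Phi(p)=p_0\in A$ for every path $p$, so $\Phi[T]\subseteq A$. This verifies Definition~\ref{def:GTK}, giving $(\Sump)^*\glt(\Sumq)^*$, i.e. $\Sump\glt\Sumq$ by the dualising convention.

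No real obstacle is expected. The only points needing care are the direction of the dualisation and the observation that the weight comparison requires the $r_n$ to be descending.
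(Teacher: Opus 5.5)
Your proposal is correct and is essentially the paper's argument. The paper also notes $w_Q\leq^* w_P$ to get $\Sump\subseteq\Sumq$, and then uses the witness induced by the identity map together with the tree generated by $A$. Your tree of all strings with entries in $A$ is, if anything, a more careful version of the paper's ``tree of height 1'', since Definition~\ref{def:game-tree} requires every node, not just the root, to be $(\Sumq)^*$-branching.
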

\begin{proof} 
If $P\subseteq^* Q$, then $w_Q\leq^* w_P$, and so $\Sump\subseteq \Sumq$. To show this implies $\Sump\glt \Sumq$, consider the partial continuous function $\varphi\pcolon\lww\to\w$ induced by the identity map $\id\colon\w\to\w$. Since $\Sump\subseteq \Sumq$, any subset $A\in(\Sump)^*$ defines a $(\Sumq)^*$-branching tree [of height 1], and so the claim follows. 
\end{proof}

Notice that Claim~\ref{claim:Pw-first-direction} also proves the forward direction of the desired equivalence~\eqref{eq:equiv-Pw-fin}. It remains to prove its contrapositive:

\begin{claim}\label{claim:Pw-second-direction} Suppose $P,Q\subseteq\w$ and $P\not\subseteq^* Q$. \underline{Then}, $\Sump\not\glt\Sumq$.
	
\end{claim}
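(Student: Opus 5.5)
Suppose for contradiction that $\Phi$ witnesses $\Sump\glt\Sumq$, i.e.\ for every $A\in(\Sump)^*$ there is a $(\Sumq)^*$-branching tree $T$ with $[T]\subseteq\dom(\Phi)$ and $\Phi[T]\subseteq A$. The plan is to construct a single set $A\in(\Sump)^*$ that defeats every such tree. The tool is the labelling function $\nu:=\nu^{\Sumq}_\Phi$ with respect to the filter $(\Sumq)^*$, so that a set is positive exactly when it has infinite $w_Q$-weight. The set $A$ is built by diagonalising along the infinitely many $n\in P\setminus Q$. On such a block $I_n$ we have $w_P=r_{n+1}$ but $w_Q=r_n$. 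By Condition (2), deleting the whole block $I_n$ (for infinitely many such $n$) costs total $w_P$-weight at most $\sum_n 2^{-n-1}<\infty$, so the complement stays in $(\Sump)^*$.

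\textbf{Step 1: positive labels.} Let $L=\{c\in\w : \nu(\sigma)=c \text{ for some }\sigma\}$ be the set of values appearing as labels. By Lemma~\ref{lem:separation-1}, if the root of a witness tree $T$ carries label $c$, or any node of $T$ does, then some path has $\Phi$-value $c$. So $A$ must avoid those values. The plan is to choose $A=\w\setminus\bigcup_{k}I_{n_k}$, where $n_k$ ranges over a suitable infinite subset of $P\setminus Q$. The remaining task is to ensure that every witness tree has either a labelled node whose label falls in a deleted block, or a configuration excluded by the critical-node analysis.

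\textbf{Step 2: critical nodes.} If the root of $T$ is labelled $\bot$, Lemma~\ref{lem:separation-2} yields a critical node $\sigma\in T$. Its successors in $T$ form a $(\Sumq)^*$-set, and $\{n:\nu(\sigma\fr n)=\bot\}$ is $\Sumq$-small. Hence $(\Sumq)^*$-many successors carry numeric labels, with each label occurring on only a $\Sumq$-small set of successors. This gives a pigeonhole situation: the map $n\mapsto\nu(\sigma\fr n)$ is finite-to-small, and any set $A$ must contain $\Sumq$-positively many of these labels. I would then argue that the label set $\{\nu(\sigma\fr n)\}$ over a $Q$-large set has infinite $w_P$-mass concentrated on blocks in $P\setminus Q$, or else it can be Kat\v{e}tov-compared to $Q$-weight. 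The point is that the fibre map pushes $w_Q$-positive sets to sets meeting infinitely many deleted blocks, and these can be chosen in advance.

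Since there are only countably many nodes $\sigma$, I would enumerate all critical nodes and all labelled nodes, and interleave the choice of the blocks $n_k$ so that each one is handled:
\begin{enumerate}
\item for each critical $\sigma$, the deleted blocks capture a $(\Sumq)^*$-large set of the successor labels, so that no $(\Sumq)^*$-branching subtree at $\sigma$ avoids them;
\item labelled nodes are handled as in Step 1.
\end{enumerate}

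\textbf{Main obstacle.} The hard step is the critical-node case. I must show that for a critical $\sigma$ one can choose finitely many further blocks $I_n$ with $n\in P\setminus Q$ so that the successors whose labels lie in those blocks have $w_Q$-weight bounded below by a fixed amount. Repeating this infinitely often then makes the set of successors with labels in deleted blocks $\Sumq$-positive. The route I would try first is to compare $w_Q$ on successors with $w_P$ on labels via Condition (1): $|I_{<n}|\le r_n|I_n|$ says a single block $I_n$ outweighs everything before it. I expect this to force a successor set of fixed $w_Q$-weight to map onto labels spread over large blocks, and I would try to use this to convert the weight discrepancy $r_n$ versus $r_{n+1}$ on $P\setminus Q$ blocks into the required lower bound.
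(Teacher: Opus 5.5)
There is a genuine gap, and it sits exactly at the step you flag as the main obstacle. The statement you need there is false as a fact about critical nodes, so no set of the form $A=\w\setminus\bigcup_k I_{n_k}$ with $n_k\in P\setminus Q$ can do the job.

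Labels of successors of a critical node have no reason to land in blocks indexed by $P\setminus Q$. Take an injection $g$ with range inside $\bigcup_{m\notin P\setminus Q}I_m$. This is possible whenever $P$ is co-infinite; for cofinite $P$ one has $\Sump=\Pw$ and the claim is trivial. Put $\Phi(p)=g(p(0))$. Then the root is labelled $\bot$ and is critical, yet none of its successor labels lies in a deletable block. In fact the full tree $\lww$ is $(\Sumq)^*$-branching, satisfies $[\lww]\subseteq\dom(\Phi)$, and has $\Phi[\lww]=\mathrm{ran}(g)\subseteq A$ for every $A$ of your form. So testing $\Phi$ only against such sets can never yield a contradiction. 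Your ``or else Kat\v{e}tov-compare'' alternative would have to supply a different $A$, and the proposal does not say which.

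Two smaller problems. A numerical root label $c$ need not lie in a $P\setminus Q$ block, so you must delete $\{c\}$ itself. And deleting the labels of all labelled nodes, as in your item (2), can delete all of $\w$ (e.g.\ for $\Phi(p)=p(0)$). It is also unnecessary: only the root label matters, and otherwise only labels of chosen successors of critical nodes.

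The missing idea is to use $P\setminus Q$ on the \emph{successor} side, and to delete exactly the labels that occur rather than whole label blocks.
\begin{enumerate}
\item Fix a critical $\tau$ and $n\in P\setminus Q$ with $n\ge 1$. The successors in $I_n$ have $w_Q$-weight $r_n|I_n|\ge|I_{<n}|\ge 1$ by Condition (1).
\item Colour $x\in I_n$ according to whether $\nu(\tau\fr x)$ is $\bot$, lies in $I_{<n}$, or lies in $I_{\ge n}$. A colour class $F_n$ with $|F_n|\ge|I_n|/3$ has $w_Q(F_n)\ge 1/3$, and some colour recurs for infinitely many $n\in P\setminus Q$.
\item The colour $\bot$ cannot recur, since the $\bot$-successors of a critical node form a $\Sumq$-set.
\item If the labels lie in $I_{\ge n}$, each has $w_P$-weight at most $r_{n+1}$; this is where $n\in P$ enters. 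So the at most $|I_n|$ labels on $F_n$ cost $w_P$-weight at most $r_{n+1}|I_n|\le 2^{-n-1}$ by Condition (2).
\item If the labels lie in $I_{<n}$, a second pigeonhole gives a single label carried by a set of $w_Q$-weight at least $1/3$. Since $\nu(\tau)=\bot$, infinitely many distinct such labels occur, so they can be chosen with arbitrarily small $w_P$-weight.
\item Diagonalising over all critical nodes produces $C\in\Sump$ and, for each critical $\tau_i$, a $(\Sumq)^*$-positive successor set $D[i]$ all of whose labels lie in $C$. Then $A=\w\setminus C$ is refuted via Lemmas~\ref{lem:separation-2} and~\ref{lem:separation-1}.
\end{enumerate}
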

\begin{proof} Suppose for contradiction there exists a partial continuous function $\Phi$ witnessing $\Sump\glt\Sumq$. Denote the labelling function to be $\nu:=\nu_\Phi^{\Sumq}$, and construct the canonical tree $T_\Phi$. There are two main cases to check.
	
\subsubsection*{Case 1: $\nu(\o)\in\w$.} Suppose the root of $T_\Phi$ is labelled $c\in\w$. Clearly, $\{c\}\in \Sump$. Hence, denoting $A:=\w\setminus \{c\}$, we have that $A$ belongs to $(\Sump)^*$, the dual filter of $\Sump$. By assumption, there exists a $(\Sumq)^*$-branching tree $T_A$ such that $[T_A]\subseteq\dom(\Phi)$ and $\Phi[T_A]\subseteq A$. However, by Lemma~\ref{lem:separation-1}, there exists $p\in [T_A]$ such that $\Phi(p)=c\notin A$, a contradiction.		
	
\subsubsection*{Case 2: $\nu(\o)\in\bot$.} By Lemma~\ref{lem:separation-2} and our assumption that $\Phi$ witnesses $\Sump\glt\Sumq$, $T_\Phi$ contains at least one $\nu$-critical node in $T_\Phi$. Hence, enumerate all such critical nodes as $\{\tau_{i}\}_{i\in\w}$. [By allowing repetitions, we may assume the enumerating index set is $\w$.] 

\smallskip 
	

For each critical node $\tau_i$ and each interval $I_n$ of $\w$, we can divide $I_n$ using three different colours based on the associated label of its elements:
\begin{align}\label{eq:coarse-label}
\widetilde{\nu}_{i,n}\colon I_n&\longrightarrow 3  \\
x &\longmapsto \begin{cases}
0\qquad\text{if}\, \nu(\tau_i\fr x)=\bot\\
1 \qquad\text{if}\, \nu(\tau_i\fr x)\in I_{<n} \\
3 \qquad\text{if}\, \nu(\tau_i\fr x)\in I_{\geq n}\\
\end{cases}\quad . \nonumber
\end{align}
We then use this colouring to define some key parameters. 
\begin{itemize}
	\item For every $n\in\w$, there exists a subset 
	$F_n\subseteq I_n$ and some $t_n<3$ such that 
	\begin{equation}\label{eq:tn-bound}
	|F_n|\geq \frac{1}{3}|I_n|\,\,\text{and}\,\, \widetilde{\nu}_{i,n}(x)=t_n \,\,\text{for all}\, x\in F_n. 
	\end{equation}
	Moreover, if $n\notin Q$ and $n\neq 0$, then Condition (1) yields
	\begin{equation}\label{eq:one-third-bound}
	w_Q(F_n)\geq\frac{r_n}{3}|I_n|
	\geq \frac{1}{3}|I_{<n}|
	\geq \frac{1}{3}\, .
	\end{equation}
	If $n\notin Q$ and $n=0$, then Condition (3) guarantees the same inequality since $\frac{r_0}{3}|I_0|=\frac{1}{3}$.  
	\item Since $P\not\subseteq^*Q$ by hypothesis, the set $P\setminus Q$ is infinite. Hence, for every fixed critical node $\tau_i$, we obtain infinitely many indices $n\in P\setminus Q$, each with an associated 
	value $t_n\in\{0,1,2\}$. Since there are only three possible values, the 
	pigeonhole principle implies there exists some $t<3$ such that 
	\[
	t_n=t
	\]
	for infinitely many $n\in P\setminus Q$. Our analysis now splits into three further sub-cases, based on the value of $t$. 
\end{itemize} 

\smallskip  

To orient the reader, here is the informal proof strategy for Case 2. Working over {\em all} critical nodes $\{\tau_i\}_{i\in\w}$, we aim to extract a ``bad'' collection $\{C_k\}_{k\in\w}$ of their successor labels. We will then use this collection to construct a set $A\in (\Sump)^*$, and show that any $\Phi$ witnessing $\Sump\glt\Sumq$ cannot witness $A$. From this contradiction, we conclude $\Sump\not\glt\Sumq$. However, to define this bad collection first requires a case-by-case analysis 
of how the successor labels are distributed, which we now turn to:
\medskip 

\begin{itemize}
	\item \textbf{Case 2a: $t=0$ .}  Let $H_i$ be the infinite subset of $n$'s in $P\setminus Q$ such that $t_n=0$. Since $\tau_i$ is critical, 
	$$F:=\bigcup_{n\in H_i} F_n\subseteq \{x\mid \nu(\tau_i\fr x)=\bot\}\in\Sumq \,,$$
	and so $w_Q(F)<\infty$ by definition. However, since $n\notin Q$ for all $n\in H_i$, applying Equation~\eqref{eq:one-third-bound} yields
	$$w_Q(F)=\sum_{n\in H_i} w_Q(F_n)\geq \sum_{n\in H_i}\frac{1}{3}=\infty,$$
	a contradiction. 
	So this case never happens.

	\smallskip 
	\item \textbf{Case 2b: $t=1$ .} Let $H_i$ be the infinite subset of $n$'s in $P\setminus Q$ such that $t_n=1$. By definition, for each $n\in H_i$, we have $\nu(\tau_i\fr x)\in I_{<n}$ for every $x\in F_n$. By the finite pigeonhole principle, one can find $G_n\subseteq F_n$ and some fixed  $c^i_n\in I_{<n}$ such that $|G_n|\geq |F_n|/|I_{<n}|$ and $\nu(\tau_i\fr x) = c^i_n$ for any $x\in G_n$. In particular, by Equation~\eqref{eq:tn-bound} and Condition (1), this yields the inequality
$$	|G_n|\geq \frac{|F_n|}{|I_{<n}|}\geq  \frac{|I_n|}{3|I_{<n}|}  \geq \frac{1}{3 r_n} \,\,.$$
	Moreover, since $n\notin Q$, the $w_Q$-weight of each $x\in G_n$ is $r_n$, and so we obtain \begin{equation}\label{eq:Gn-third}
	w_Q(G_n)\geq \frac{1}{3}\,\,,
	\end{equation}
	and so for $G:=\bigcup_{n\in H_i} G_n$, we obtain the total weight of
\begin{equation}\label{eq:wQ-G}
w_Q(G)=\infty\,.
\end{equation}

Collecting all $\nu$-labels, suppose the set of labels $K:=\{c^i_n\mid n\in H_i\}$ is finite. For each $k\in K$, define $G[k]:=\{x\in G\mid \nu(\tau_i\fr x)=k\}$. Clearly, $G=\bigcup_{k\in K} G[k]$. In which case, by Equation~\eqref{eq:wQ-G}, we have
$$\sum_{k\in K}w_Q (G[k])=w_Q(G)=\infty\,,$$
and so by the finiteness of $K$, there must exist some $k$ such that $w_Q(G[k])=\infty$. However, as $\tau_i$ is critical, this means $$G[k]\subseteq \{n\mid \nu(\tau_i\fr n)=k\}\in \Sumq,$$
and so $G[k]$ must have finite $w_Q$-weight, a contradiction. Hence, the set $K:=\{c^i_n\mid n\in H_i\}$ must be infinite for all critical nodes. 	
	\smallskip 
\item \textbf{Case 2c: $t=2$ .} Let $H_i$ be the infinite subset of $n$'s in $P\setminus Q$ such that $t_n=3$. By definition, for each $n\in H_i$ and every $x\in F_n$, we have $c_x:=\nu(\tau_i\fr x)\in I_{\geq n}$, and so $w_P(c_x)\leq r_{n+1}$. In fact, collecting all $\nu$-labels, set $E_n:=\{c_x\mid x\in F_n\}$. Applying Condition (2) then yields
	$$ w_P(E_n)=\sum_{x\in F_n} w_P(c_x)\leq r_{n+1}|F_n|\leq r_{n+1}|I_n|\leq 2^{-n-1}\,.$$

\end{itemize}
		
We now construct three sequences inductively:
\begin{itemize}
\item A sparse sequence $\{C_k\}_{k\in\w}$ of successor labels;
	\item A sequence of subsets $\{D[i]\}_{i\in\w}$ with sufficient $w_Q$-weight;
	\item A sequence of finite subsets $\{V_k\}_{k\in\w}$, recording the previously used indices from $P\setminus Q$.
\end{itemize}
This is arranged by a standard diagonalisation over all $k=\lranglet{i}{b}$, where we visit each critical node $\{\tau_i\}_{i\in\w}$ infinitely often, and sample (distinct) successors sufficiently far out along $\w$ to ensure sparseness.

At stage $k=\lranglet{i}{b}$, we select a successor label of $\tau_i$. 
\begin{itemize}
	\item Suppose Case 2b holds for this $i$. Since $K:=\{c^i_n\mid n\in H_i\}$ is infinite, there exists $n\in H_i$ such that $\lranglet{i}{n}\notin V_{k-1}$ and $c^i_n\notin I_{<k}$.  Hence, set $$C_k:=\{c^i_n\}\,\quad  V_k:=V_{k-1}\cup \{\lranglet{i}{n}\}\quad\text{and}\quad D_k:=G_n.$$
	Notice this yields the bounds $$w_P(C_k)\leq r_{k}\leq 2^{-k}\quad \text{and}\quad w_Q(D_k)\geq \frac{1}{3};$$
	 this follows from Condition (2) and Equation~\eqref{eq:Gn-third}.
	\item Suppose Case 2c holds for this $i$. Since $H_i$ is infinite, there exists $n\in H_i$ such that $n>k$ and $\lranglet{i}{n}\notin V_{k-1}$. Hence, set 
	$$C_k:=E_x=\{c_x \mid x\in F_n\}\,\quad  V_k:=V_{k-1}\cup \{\lranglet{i}{n}\}\quad\text{and}\quad D_k:=F_n \,.$$
Notice this yields the same bounds as in Case 2b. Since $n>k$, we have $w_P(C_k)\leq 2^{-n-1}\leq 2^{-k}$; the fact that $w_Q(D_k)=w_Q(F_n)\geq \frac{1}{3}$ follows from Equation~\eqref{eq:one-third-bound}.

\end{itemize}
Notice since $\lranglet{i}{n}\notin V_{k-1}$ at every stage, we avoid double-counting: in particular, $D_{\lranglet{i}{b}}\cap D_{\lranglet{i}{b'}}=\emptyset$ for any $b\neq b'$. Notice also that $\nu(\tau_i\fr x)\in C_k$ holds for any $x\in D_k$.
\smallskip

We now have all the necessary pieces to assemble the main argument. Define
$$D[i]:=\bigcup_{b\in\w} D_{\lranglet{i}{b}}\,.$$
Notice for every $k=\lranglet{i}{b}$, we have $D_{\lranglet{i}{b}}\subseteq I_n$ for a distinct $n$, and $w_Q(D_{\lranglet{i}{b}})\geq \frac{1}{3}$. Hence, $$w_Q(D[i])=\infty,$$
and so $D[i]$ is $(\Sumq)^*$-positive. Moreover, collecting all associated labels 
$$C:=\bigcup_{k\in\w} C_k,$$
we get $w_P(C)=\sum_{k\in\w}w_P(C_k)\leq \sum_{k\in\w} 2^{-k}<\infty$. Hence, $C\in \Sump$, and so if we take $A:=\w\setminus C$, then $A\in (\Sump)^*$.
\smallskip

By assumption, there exists a $(\Sumq)^*$-branching tree $T_A$ such that $[T_A]\subseteq \dom(\Phi)$ and $\Phi[T_A]\subseteq A$. By Lemma~\ref{lem:separation-2}, $T_A$ has a critical node $\sigma=\tau_i$. Since $\tau_i\in T_A$ and $T_A$ is $(\Sumq)^*$-branching, this means $$\{n\mid \tau_i\fr n\in T_A\}\in (\Sumq)^*\,.$$
Since $D[i]$ is $(\Sumq)^*$-positive, by Observation~\ref{obs:positive-null} there exists some $x\in D[i]$ such that $\tau_i\fr x\in T_A$. Moreover, for any $x\in D[i]$, there exists $k=\lranglet{i}{b}$ such that $x\in D_k$, which is arranged to have the label $c:=\nu(\tau_i\fr x)\in C_k\subseteq C$ by construction. Applying Lemma~\ref{lem:separation-1}, this means there must exist $p\in [T_A]$ such that $\Phi(p)=c\notin A$, a contradiction. This completes the proof.
\end{proof}

\begin{theorem}\label{thm:Pw-emb} There exists an order embedding of $\big(\Pw/\Fin,\subseteq^*\!\!\big)$ into the $\glt$-order on ideals over $\w$. In particular, it contains a chain of length $\mathfrak{b}$, and antichains of size $\mathfrak{c}$.
\end{theorem}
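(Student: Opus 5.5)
The plan is to define the map $[P]\mapsto[\Sump]$, where $[\Sump]$ is the $\elt$-equivalence class of $\Sump$, and read off both directions of the order embedding from the two claims already proved. This makes the theorem essentially a packaging of Claim~\ref{claim:Pw-first-direction} and Claim~\ref{claim:Pw-second-direction}.

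\textbf{Well-definedness and monotonicity.} If $P=^*Q$, then $P\subseteq^*Q$ and $Q\subseteq^*P$. Claim~\ref{claim:Pw-first-direction} then gives $\Sump\glt\Sumq$ and $\Sumq\glt\Sump$, so $\Sump\elt\Sumq$ and the map does not depend on the representative. The same claim shows that the map is monotone: $P\subseteq^*Q$ implies $\Sump\glt\Sumq$.

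\textbf{Order-reflection.} Suppose $\Sump\glt\Sumq$. If $P\not\subseteq^*Q$, Claim~\ref{claim:Pw-second-direction} would give $\Sump\not\glt\Sumq$, a contradiction. Hence $P\subseteq^*Q$. So we have the full equivalence \eqref{eq:equiv-Pw-fin}, and the map is an order embedding; in particular it is injective on classes.

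The only subtlety is that the target of the map consists of ideals. By Observation~\ref{obs:co-infinite}, $\Sump$ is a genuine summable ideal whenever $P$ is co-infinite. The single remaining class, $[\w]$, is sent to $\Pw$ by Observation~\ref{obs:w}, and more generally every cofinite $P$ is sent to $\Pw$, as noted after Observation~\ref{obs:w}. Under the paper's convention $\Pw$ still counts as a (degenerate) ideal, so the embedding lands in the $\glt$-order on ideals. If one wants to avoid the degenerate case, one can restrict to co-infinite sets, which still form a copy of $\Pw/\Fin$ minus its top element.

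\textbf{Consequences.} These come from classical facts about $\Pw/\Fin$, transported along the embedding.
\begin{itemize}
\item[] An order embedding preserves chains. Using a $\leq^*$-unbounded, $\leq^*$-increasing sequence $\{f_\alpha\}_{\alpha<\mathfrak b}$ of strictly increasing functions, the sets $\{(n,m): m\le f_\alpha(n)\}$ in $\w\times\w\cong\w$ give a $\subseteq^*$-chain of length $\mathfrak b$. (Alternatively, cite the standard fact that $\Pw/\Fin$ contains such a chain.)
\item[] An order embedding preserves antichains, because it reflects the order. An almost disjoint family of size $\mathfrak c$ (for example, the branches of $2^{<\w}$) consists of pairwise $\subseteq^*$-incomparable infinite, co-infinite sets, which yields an antichain of size $\mathfrak c$.
\end{itemize}

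I expect no real obstacle here. The heavy lifting was done in Claim~\ref{claim:Pw-second-direction}, and the remaining care is just checking that the degenerate top element is handled consistently with the conventions on ideals.
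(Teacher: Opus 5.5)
Your proposal is correct and is essentially the paper's own argument. The map $[P]\mapsto\Sump$ is well defined and monotone by Claim~\ref{claim:Pw-first-direction}, and order-reflecting by Claim~\ref{claim:Pw-second-direction}, while the chain and antichain consequences come from standard facts about $\Pw/\Fin$, which you spell out in more detail than the paper does (for the chain, take the $f_\alpha$ strictly $<^*$-increasing, or thin out your sequence using unboundedness and the regularity of $\mathfrak{b}$, so that the resulting classes are pairwise distinct).
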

\begin{proof} The embedding is immediate from Claims~\ref{claim:Pw-first-direction} and~\ref{claim:Pw-second-direction}. The rest follows from well-known properties of $\Pw/\Fin$.
\end{proof}

\begin{conclusion} Despite its coarseness, the Gamified Kat\v{e}tov order is highly non-linear -- in particular, it admits continuum-sized antichains of pairwise incomparable classes.
\end{conclusion}

It is reasonable to read Theorem~\ref{thm:Pw-emb} as embedding a copy of $\Pw/\Fin$ into the family of summable ideals. However, there is a caveat. By Observation~\ref{obs:w}, the maximum element $[\w]\in \Pw/\Fin$ corresponds to $\mathrm{Sum}_\w=\Pw$, which strictly speaking is not a summable ideal. One solution would be to expand the original definition of a summable ideal to include non-proper ideals. In our view, a more natural approach is to identify sharper bounds for $\mathrm{Sum}_S$ within $\glt$ .

\begin{theorem}\label{thm:Pw-emb-Denz}  The {\em asymptotic zero density ideal} is defined as
	$$\Denz:= \Big\{A\subseteq \w \Bmid \lim_{n\to\infty} \frac{|A\cap [0,n]|}{n}=0\Big\}\,.$$
The following hold:
	\begin{enumerate}[label=(\roman*)]
		\item For any co-infinite $S\subseteq\w$, 
		$$\Sums\sglt \Denz\,.$$
		\item For any infinite $S\subseteq\w$, 
		$$\Fin\sglt \Sums\,.$$
	\end{enumerate} 
Consequently, the Gamified Kat\v{e}tov order embeds a copy of $\Pw/\Fin$ between $\Fin$ and $\Denz$. 
\end{theorem}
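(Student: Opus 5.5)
The plan splits both items into a non-strict reduction, obtained from a Kat\v{e}tov-type map, and a strictness half, obtained by a labelling argument modelled on Claim~\ref{claim:Pw-second-direction}. For the non-strict halves, first note that any Kat\v{e}tov reduction $\calI\lk\calJ$ via $h\colon\w\to\w$ yields $\calI\glt\calJ$. Take $\Phi(p):=h(p(0))$. For $A\in\calI^*$, the tree of height one (extended arbitrarily above) whose root has successor set $h^{-1}[A]\in\calJ^*$ is $\calJ^*$-branching, and $\Phi[T]\subseteq A$. This is the same height-one argument as in Claim~\ref{claim:Pw-first-direction}.

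For (ii), the non-strict half is immediate: $\Fin\subseteq\Sums$, so the identity witnesses $\Fin\glt\Sums$. For (i), I would show $\Sums\lk\Denz$. Either I cite the standard fact that every summable ideal is Kat\v{e}tov below $\Denz$, or I build $h$ directly from the interval structure of Construction~\ref{cons:Pw-fin}. The direct construction cuts $\w$ into consecutive blocks $J_m$, one for each $m\in\w$, with $|J_m|$ roughly proportional to $w_S(m)$ times the current scale, and lets $h$ map $J_m$ onto $m$. Then a set of finite $w_S$-weight pulls back to a set of asymptotic density zero. The co-infiniteness of $S$ (Observation~\ref{obs:co-infinite}) is what makes this a genuine proper-ideal comparison.

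For strictness in (ii), I show $\Sums\not\glt\Fin$ for infinite $S$ by rerunning the proof of Claim~\ref{claim:Pw-second-direction} with $\calV=\Fin^*$, so that "positive" simply means "infinite". Put $\nu:=\nu^{\Fin}_\Phi$.
\begin{itemize}
\item If the root has label $c\in\w$, then $\{c\}\in\Sums$, and Lemma~\ref{lem:separation-1} gives a contradiction as in Case 1.
\item Otherwise, enumerate the critical nodes $\tau_i$. Criticality means that each label $c$ occurs on only finitely many successors of $\tau_i$, and that almost all successors have labels in $\w$. Hence each $\tau_i$ has infinitely many distinct successor labels.
\end{itemize}
Now diagonalise over $k=\lranglet{i}{b}$, picking a fresh successor $x_k$ of $\tau_i$ whose label $c_k$ satisfies $w_S(c_k)\le 2^{-k}$. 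This is possible because $w_S(m)\to 0$, since $r_n\to0$. Then $C:=\{c_k\}\in\Sums$, and each $D[i]$ is infinite, hence $\Fin^*$-positive. Applying Lemma~\ref{lem:separation-2} to $T_{\w\setminus C}$, then Observation~\ref{obs:positive-null}, then Lemma~\ref{lem:separation-1}, gives the contradiction exactly as before.

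For strictness in (i), $\Denz\not\glt\Sums$, I would use the same template with $\calV=(\Sums)^*$. As in Case 2 of Claim~\ref{claim:Pw-second-direction}, I would colour each $I_n$ with $n\notin S$ according to the successor labels of $\tau_i$, and use \eqref{eq:one-third-bound} to produce, for each $i$, infinitely many blocks $D_k\subseteq I_n$ with $w_S(D_k)\ge\frac13$. Criticality rules out the all-$\bot$ colour, and forces infinitely many distinct labels in the "small label" colour, just as in Cases 2a and 2b. The goal is to choose the blocks so that the collected label set $C=\bigcup_k C_k$ has asymptotic density zero, and then conclude with Lemmas~\ref{lem:separation-1} and~\ref{lem:separation-2}.

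I expect the main obstacle to be the analogue of Case 2c. Labels landing in $I_{\ge n}$ have $w_P$-weight that is small in Construction~\ref{cons:Pw-fin}, but that does not make them sparse in the density sense. A single $D_k$ may need up to about $|I_n|/3$ labels, and these could be concentrated in an initial segment. What I would try first is to exploit the fact that each fibre $\{x\mid\nu(\tau_i\fr x)=c\}$ has finite $w_S$-weight, and to pass to sub-blocks $D_k$ carrying only boundedly many labels. Failing that, I would choose $n$ so large that $|C_k|/\max C_k\le 2^{-k}$, using the rapid growth $|I_{<n}|\le r_n|I_n|$ from Condition~(1). Either route should keep $C$ in $\Denz$ while retaining $w_S(D[i])=\infty$. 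If both fail, I would look for a positive-set property separating $\Denz$ from asymptotically defined ideals, of the kind the introduction attributes to Section~\ref{sec:pos-dif-ideal}.
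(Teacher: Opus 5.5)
\textbf{There is a genuine gap: the strictness half of (i), $\Denz\not\glt\Sums$, is not proved.}

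The other parts are fine. Your non-strict halves work: citing that summable ideals are Kat\v{e}tov below $\Denz$ is exactly what the paper does, though your block construction of $h$ is only a sketch. Your direct $\Fin^*$-labelling proof of $\Sums\not\glt\Fin$ is correct, and in fact never uses that $S$ is infinite. The paper instead gets it from $\Fin\subseteq\mathrm{Sum}_\emptyset$ and Claim~\ref{claim:Pw-second-direction}.

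For (i), the decisive step is left open: making $C=\bigcup_k C_k$ have asymptotic density zero in the Case~2c analogue. Neither route you suggest works as stated. Nothing in your setup rules out a critical node with $\nu(\tau_i\fr x)=x$; fibres are singletons, so this is compatible with criticality.
\begin{itemize}
\item Conditions (1) and (2) give $r_n|I_{<n}|\le 2^{1-n}$. So any $D_k\subseteq I_n$ with $n\notin S$ and $w_S(D_k)\ge\frac13$ has at least $1/(3r_n)\ge 2^{n-1}|I_{<n}|/3$ elements, and here $C_k=D_k$.
\item Your first route (boundedly many labels per block) fails as soon as $r_n<1/(3B)$, since $B$ labels carry weight at most $Br_n$.
\item Your second criterion, $|C_k|/\max C_k\le 2^{-k}$, can be met but does not give density zero. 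For example, $C_k$ could be an initial segment of $I_n$ of length $\lceil 1/(3r_n)\rceil$ together with $\max I_n$. Then $|C\cap[0,N]|/N$ is close to $1$ at $N=\min I_n+|C_k|$.
\end{itemize}
The fast growth of the intervals works against you here, since it forces $|C_k|$ to dwarf $\min C_k$. A direct proof would need a uniform bound $|C_k\cap[0,N]|\le\epsilon_k N$ for all $N$, obtained by deliberately spreading out the chosen labels; your plan does not supply this. The final ``Consequently'' clause, which sends $[\w]$ to $\Denz$, needs exactly $\Denz\not\glt\Sump$ for co-infinite $P$, so it is not established either.

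The missing idea is that no new combinatorics is needed. Strictness follows from the embedding you already have, together with transitivity of $\glt$. Given co-infinite $S$, let $S'=S\cup Z$, where $Z$ is an infinite, co-infinite subset of $\w\setminus S$. Then $S'$ is co-infinite and $S'\not\subseteq^* S$; this is the paper's remark that no co-infinite set is $\subseteq^*$-maximal. By the non-strict half, $\mathrm{Sum}_{S'}\glt\Denz$. So $\Denz\glt\Sums$ would give $\mathrm{Sum}_{S'}\glt\Sums$, contradicting Claim~\ref{claim:Pw-second-direction}. This is the paper's proof of (i), and the same sandwich argument with $\Fin\glt\mathrm{Sum}_\emptyset$ is how it gets (ii).
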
	
\begin{proof} \hfill \begin{enumerate}[label=(\roman*):]
		\item For any summable ideal $\Sumf$, it is known that $\Sumf\lk \Denz$ \cite[Proposition 3.5]{HHH07}. Hence, by Theorem~\ref{thm:main-thm} and Observation~\ref{obs:co-infinite}, this means $\Sums\glt \Denz$ for all co-infinite $S\subseteq\w$. Moreover, an easy check shows there does not exist a maximal co-infinite set $M$ whereby $A\subseteq^* M$ for all co-infinite sets $A$. 
		 Combining Claims~\ref{claim:Pw-first-direction} and \ref{claim:Pw-second-direction}, deduce that
		 $$\Sums\sglt \Denz.$$
		 \item Clearly, $\Fin\subseteq \Sums$ for any $S\subseteq\w$ and so $\Fin\glt\Sums$. To show the reduction is strict whenever $S\subseteq\w$ is infinite, apply Claims~\ref{claim:Pw-first-direction} and \ref{claim:Pw-second-direction}, which separates $\Sump$ and $\Sums$ whenever $P\subseteq\w$ is finite. 
	\end{enumerate}
Hence, combining items (i), (ii) and  Theorem~\ref{thm:Pw-emb}, this yields an embedding of $\Pw/\Fin$ into $\glt$ between $\Fin$ and $\Denz$, now assigning $\mathrm{Sum}_{\w}$ to $\Denz$ instead of $\Pw$, and $\Sump$ to $\Fin$ for any finite $P$.
\end{proof}

\begin{discussion} A natural next question is how the summable ideals arising in Theorem~\ref{thm:Pw-emb} compare with the so-called {\em canonical summable ideal} 
$$\Sumn:=\Big\{A\subseteq \w \Bmid \sum_{n\in A}\frac{1}{n+1}<\infty\Big\}\,.$$
At present, no complete comparison is known. Informally, the difficulty lies in comparing the harmonic weight function $1/(n+1)$ (which decays smoothly to 0), and the weight function $w_S$ from Construction~\ref{cons:Pw-fin} (which also tends to $0$ but is constant on long intervals). Investigating these differences may also illuminate other questions about $\Sumn$. For instance, if there exists a co-infinite $S\subseteq\w$ such that 
$$\Sumn \glt \Sums\,,$$
then this would imply $\Sumn\sglt\Denz$, thereby answering a question in \cite[Problem 6]{KiNg26}.
\end{discussion}

\section{Ramsey-Like Ideals}\label{sec:other-ideals}

Given any partial order $\leq$ on ideals (or filters), proving separation results of the form 
$$\calI_0<\calI_1\,,$$
requires establishing a positive direction (i.e. $\calI_0\leq \calI_1$) and a negative one
(i.e. $\calI_1\not\leq \calI_0$). Typically, the negative direction is harder: the positive direction only requires exhibiting {\em some} witness to the reduction $\calI_0\leq \calI_1$ whereas the negative direction requires ruling out {\em every} possible witness for  $\calI_1\leq \calI_0$. 

In this section, we re-examine the main mechanism underlying the negative arguments
from before, and extend it to identify further instances of non-linearity in the
Gamified Kat\v{e}tov order.

\subsection{A Blueprint}\label{sec:blueprint} We begin by reviewing, in broad strokes, the proof strategy of
Claim~\ref{claim:Pw-second-direction}, which showed that $\Sump\not\glt\Sumq$ whenever $P\not\subseteq^* Q.$
\begin{enumerate}
	\item Start by \textbf{partitioning} $\w$ into intervals $\{I_n\}_{n\in\w}$, which are then used to construct an ideal $\Sums$ for every $S\subseteq\w$.
\item Suppose for contradiction that $P\not\subseteq^*Q $ and there exists a $\Phi$ witnessing $\Sump\glt \Sumq$. Applying Definition~\ref{def:label}, construct the canonical tree $T_\Phi$.
\item Verify that the root of $T_\Phi$ cannot be labelled $c\in\w$, so assume the root is labelled $\bot$. Informally, $\nu(\o)=\bot$ indicates a breakdown of coherence during the recursive labelling of $T_\Phi$'s nodes. More precisely, $T_\Phi$ contains at least one critical node $\tau_i$.
\item For each critical node $\tau_i$, examine how the labels of its successors are distributed. Since $P\not\subseteq^*Q$ by hypothesis, $P\setminus Q$ is \textbf{large}. By repeated applications of the \textbf{pigeonhole principle}, we extract, for each $b<\w$, a \textbf{relatively large} set of successors $D_{\lranglet{i}{b}}$ that is \textbf{homogeneous} in the following sense.

\smallskip 
\begin{itemize}
	\item[] 
	Using the interval partition, decompose $\w\cup\{\bot\}$ into three pieces, and define a colouring function
	$$\widetilde{\nu}\pcolon \lww \to 3\,,$$
obtained by recording which piece the original label $\nu(\sigma)$ belongs to. A set $D$ of successors of $\tau_i$ is \emph{homogeneous} if $\widetilde{\nu}$ is constant on $\{\tau_i \fr x \mid x \in D\}$.  Thus, for each fixed $b$, every $x\in D_{\lranglet{i}{b}}$ receive the same colour
	$$\widetilde{c}_{\lranglet{i}{b}} := \widetilde{\nu}(\tau_i \fr x).$$ 
\end{itemize}

\item Collect the (original) successor labels into the set
$$C:=\bigcup_{i\in\w}\big\{\nu(\tau_i\fr x )\bmid x\in D_{\lranglet{i}{b}}\big\}.$$
Check that $C$ is suitably small, i.e. $C\in \Sump$.

Next, fixing each $\tau_i$, take the union of the corresponding homogeneous sets
$$ D[i]=\bigcup_{b\in\w} D_{\lranglet{i}{b}}.$$
Then, check that $D[i]$ is of non-negligible size, i.e. $D[i]$ is  $(\Sumq)^*$-positive.

\item Since $C$ is small, the complement $$A:=\omega\setminus C$$
belongs to $(\Sump)^*$ and is therefore witnessed by $\Phi$ (by hypothesis). That is, there exists a suitable tree $T_A$ such that $\Phi[T_A]\subseteq A$. However, the size of the $D[i]$'s prevent this: any candidate witness tree $T_A$ can be shown to intersect some $D[i]$. This forces the tree to include a label from $C$, and thus $\Phi[T_A]\not\subseteq A$.

Informally, whenever $P\setminus Q$ is large, we can construct a family $\{D[i]\}_{i\in\w}$ of  obstructions: each $D[i]$ is sufficiently large to create unwanted collisions with the forbidden labels in $C$, and thus prevent $A$ from being consistently witnessed.

\item Given the contradiction in (6), conclude that $\Sump\not\glt\Sumq$.
\end{enumerate}

\medskip 

The appearance of the pigeonhole principle here will be suggestive to any reader familiar with Ramsey theory. In particular, a key move in Step (4) involves passing from a fine labelling $\nu$ to a finite coarse colouring $\widetilde{\nu}$, before extracting large homogeneous sets. One may therefore ask: by varying the partition structure of $\w$, or suitably modifying our informal notions of ``large'' and ``homogeneous'', might we discover other instances of non-linearity within the gamified setting? This basic question guides our next move.

\subsection{Ramsey and Hindman Ideals} Various Ramsey-like ideals have already been studied in the classical Kat\v{e}tov order \cite{FKK24,FKK24-spaces}. Here we identify two such ideals, before discussing their associated partition theorems. 

\begin{convention} Let $A\subseteq\w$ be any subset.
\begin{itemize}
	\item $[A]^2$ denotes the set of all {\em distinct unordered pairs} of elements in $A$, $[A]^{<\w}$ denotes the family of all {\em finite subsets} of $A$, and $[A]^\w$ denotes the family of all {\em infinite subsets} of $A$.
	\item Define
	$$\FS(A):= \Big\{ \sum_{n\in F} n \Bmid F\in [A]^{<\w}\setminus \{\emptyset\}\Big\}\,.$$
	That is, $\FS(A)$ is the set of all finite [non-empty] sums of distinct elements in $A$. 
\end{itemize}		
\end{convention}

\subsubsection*{Ramsey Ideal} The {\em Ramsey ideal} is defined as 
$$\Ram:=\Big\{ A\subseteq [\w]^2\Bmid \forall B\in [\w]^\w.\,([B]^2\not\subseteq A)\Big \}\,.$$
Here is one way to understand this ideal. Any set $A\subseteq [\w]^2$ defines a graph $G_A=(\w,A)$ where $\w$ denotes the vertices of $G_A$ and $A$ the set of edges.\footnote{Per our definition of $[A]^2$, this means we do not allow self-loops in our graph $G_A$.} In which case, $\Ram$ is the ideal consisting of graphs that do not have infinite complete subgraphs. That this defines an ideal is a consequence of Ramsey's Theorem:

\begin{theorem}[Canonical Ramsey Theorem, {{\cite[Theorem II]{ErRa50}}}]\label{thm:Ramsey} For every function $f\colon [\w]^2\to\w$, there exists an infinite set $T\subseteq  \w$ such that one of the four cases hold:
	\begin{enumerate}
		\item $\forall x,y \in [T]^2.\,\, f(x)=f(y)\,.$
\item $\forall x,y \in [T]^2.\,\, f(x)=f(y)\iff \min x = \min y.$
\item $\forall x,y \in [T]^2.\,\, f(x)=f(y)\iff \max x = \max y.$
\item $\forall x,y \in [T]^2.\,\, f(x)=f(y)\iff x = y.$
	\end{enumerate}
\end{theorem}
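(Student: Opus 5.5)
The plan is the classical Erdős–Rado argument: reduce to a finite colouring of triples, apply the ordinary infinite Ramsey theorem for $[\w]^3$ and $[\w]^4$, and then do a purely combinatorial case analysis on the homogeneous set. First, to any $f\colon[\w]^2\to\w$ we attach finitely many colourings of $4$-element sets. For $a<b<c<d$ we list the pairs among $\{a,b,c,d\}$ and record, for each two of them, whether $f$ agrees on them. This is a colouring with at most $2^{\binom{6}{2}}$ colours. The relevant patterns are how two pairs $\{x_0<x_1\}$, $\{y_0<y_1\}$ sit inside a $4$-set: either they share the minimum, or they share the maximum, or they share some other coordinate, or they are disjoint (in one of the orders $x_0<x_1<y_0<y_1$, $x_0<y_0<x_1<y_1$, $x_0<y_0<y_1<x_1$). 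Applying Ramsey's theorem for $[\w]^4$ with finitely many colours, we obtain an infinite $T_0$ on which, for each relative position type, the truth value of ``$f(x)=f(y)$'' depends only on the type. Since every pair of distinct pairs from $T_0$ spans at most four points, and can be extended to four points within the infinite set $T_0$, the resulting equality pattern is fully determined by the finitely many type bits.

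Second, we analyse which consistent patterns can occur. Let $\epsilon_{\min}$, $\epsilon_{\max}$, $\epsilon_{\mathrm{mid}}$ (pairs $\{a,b\},\{b,c\}$) and the three disjoint bits be the type bits, and use transitivity of equality to cut down the possibilities. If any disjoint-type bit is true, then chaining through a third far-away pair shows that $f$ is constant; this is case (1). For instance, if $f\{a,b\}=f\{c,d\}$ whenever $a<b<c<d$, then any two pairs equal the value on a pair lying far beyond both. If $\epsilon_{\mathrm{mid}}$ holds, then $f\{a,b\}=f\{b,c\}=f\{c,d\}$, and this again forces constancy. If both $\epsilon_{\min}$ and $\epsilon_{\max}$ hold, then $f\{a,b\}=f\{a,c\}=f\{b,c\}$, which is constant again. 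The remaining cases are these: only $\epsilon_{\min}$ holds, giving case (2); only $\epsilon_{\max}$ holds, giving case (3); or no bit holds, giving case (4). In each of these, we must check that the ``iff'' holds in both directions. The forward direction comes from the true bit, and the reverse comes from the false bits, which cover every other configuration.

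The main obstacle is making the case analysis exhaustive and correct. In particular we must show that every mixed pattern collapses to constancy, for example a disjoint-crossing bit being true while the other bits are false. My first attempt would be the uniform trick used above: given two pairs $x,y$ and any true bit of some type, pick a pair $z$ in $T_0$ positioned so that both $(x,z)$ and $(y,z)$, or a short chain through intermediate pairs, realise that type. Because $T_0$ is infinite, such $z$ always exist above $\max(x\cup y)$. For the nested and crossing types, a chain of length two or three may be needed, and I would verify each type by hand. I expect this bookkeeping to be the only nontrivial part; the Ramsey step itself is standard.
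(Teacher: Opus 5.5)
The paper gives no proof of this statement; it imports the result from Erd\H{o}s--Rado and uses it as a black box in the proof of Theorem~\ref{thm:Sumn-Ram}. Your outline is the standard proof of that cited theorem, and it is correct. You colour $[\omega]^4$ by the equality pattern of $f$ on the six pairs, pass to a homogeneous $T_0$, and then cut the six type bits down by transitivity. Your reductions for the separated bit, for $\epsilon_{\mathrm{mid}}$, and for $\epsilon_{\min}\wedge\epsilon_{\max}$ are right, and so are the ``iff'' checks in cases (2)--(4).

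One part of your bookkeeping needs changing: the crossing and nested bits. For these, you cannot link two pairs through a pair $z$ above $\max(x\cup y)$, nor through any chain of links of that same type. Let $x=\{t_0,t_1\}$ consist of the two least elements of $T_0$. Then $x$ sits in no crossing or nested configuration with any other pair of $T_0$, since both require a point of $T_0$ below $t_0$ or strictly between $t_0$ and $t_1$. So no such chain can reach $x$.

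The fix is to convert these bits into three-point bits first, using five points $a<b<c<d<e$ in $T_0$.
\begin{itemize}
\item \textbf{Crossing bit.} The $4$-sets $\{a,b,c,d\}$, $\{b,c,d,e\}$ and $\{a,c,d,e\}$ give $f\{a,c\}=f\{b,d\}=f\{c,e\}=f\{a,d\}$. So $\epsilon_{\min}$ is realised by $\{a,c\},\{a,d\}$ and $\epsilon_{\max}$ by $\{b,d\},\{a,d\}$.
\item \textbf{Nested bit.} The $4$-sets $\{a,b,c,e\}$, $\{a,b,d,e\}$ and $\{a,c,d,e\}$ give $f\{b,c\}=f\{a,e\}=f\{b,d\}=f\{c,d\}$. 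This again realises both $\epsilon_{\min}$ and $\epsilon_{\max}$.
\end{itemize}
In each case, extend a triple by a point above it. Homogeneity then makes each realised three-point bit true for every triple of $T_0$, including tight pairs such as $\{t_0,t_1\}$. You are then in your ``both $\epsilon_{\min}$ and $\epsilon_{\max}$'' case, which gives constancy. With this change the case analysis is exhaustive.
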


\begin{discussion} Any map $f\colon[\w]^2\to\w$ induces a partition of $[\w]^2$ via its fibres
	$$f^{-1}(\{n\})\qquad (n\in\w).$$
The Canonical Ramsey Theorem can therefore be viewed as a rigidity result: every such partition admits a large set $T\subseteq \w$ on which the partition map $f$  assumes a highly structured (canonical) form. There are four such cases: (1) $f$ is constant; (2) $f$ depends only on the minimum element; (3) $f$ depends only on the maximum element; or (4) $f$ is injective. 
\end{discussion}

\medskip 
\subsubsection*{Hindman Ideal} The {\em Hindman ideal} is defined as
$$\calH:=\Big\{ A\subseteq \w \Bmid \forall B\in [\w]^\w.\,(\FS(B)\not\subseteq A)\Big \}\,.$$
That is, $\calH$ is the ideal consisting of subsets $A\subseteq\w$ that always omit some finite sum of {\em any} infinite set. This, too, has an associated partition theorem. Define $E:=\{2^n\mid n\in\w\}$. For any $x\in\FS(E)$, define its {\em support} as the (unique) subset $\alpha(x)\subseteq E$ such that
$$x=\sum\alpha(x)\,.$$

\begin{theorem}[Canonical Hindman Theorem, {{\cite[Theorem 2.1]{Tay76}}}]\label{thm:Hindman} For every function $f\colon\w\to\w$ there exists an infinite set $H=\{h_n\mid n\in\w\}\subseteq\w$ such that
	$$\max\alpha(h_n)<\min \alpha (h_{n+1}) \,\,\forall n\in\w\,,$$
and one of the following five cases holds:
	\begin{enumerate}
		\item $\forall x,y\in \FS(H).\, f(x)=f(y)$,
		\item $\forall x,y\in \FS(H).\, f(x)=f(y)\iff \min\alpha(x)=\min \alpha(y)$,
			\item $\forall x,y\in \FS(H).\, f(x)=f(y)\iff \max\alpha(x)=\max \alpha(y)$,
\item $\forall x,y\in \FS(H).\, f(x)=f(y)\iff \min\alpha(x)=\min \alpha(y)$ {\em and} $\max\alpha(x)=\max \alpha(y)$,
		\item $\forall x,y\in \FS(H).\, f(x)=f(y)\iff x=y$ .
	\end{enumerate}
\end{theorem}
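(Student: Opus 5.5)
This is Taylor's classical theorem, and we shall simply invoke \cite[Theorem 2.1]{Tay76}. For orientation, here is the proof we would give if a self-contained argument were wanted. It follows the usual route from ``finite colourings'' to ``canonical equivalence relations''. The main tool is the Milliken--Taylor theorem. This is the block-sequence analogue of Ramsey's theorem: for every $k$ and every finite colouring of $k$-tuples $x_1<\dots<x_k$ of elements of $\FS(H_0)$ (where $x<y$ means $\max\alpha(x)<\min\alpha(y)$), there is an infinite block sequence $H\subseteq \FS(H_0)$ on which the colouring is constant. Its base case $k=1$ is Hindman's theorem itself.

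\textbf{Step 1: encode ``$f(x)=f(y)$'' as a finite colouring.} Start from $H_0=E$. Fix $x,y\in\FS(H)$, written as unions of blocks of $H$. The pair $(x,y)$ determines a finite \emph{pattern}, defined as follows. Take the blocks $z_1<\dots<z_m$ generated by the Boolean atoms of the block-index sets of $x$ and $y$ (blocks of $x$ only, of $y$ only, and of both), after merging consecutive blocks of the same atom type. Then $x$ and $y$ are recovered as fixed unions of the $z_j$. There are only finitely many patterns for each $m$, and every pair is obtained by choosing a pattern together with an increasing tuple $z_1<\dots<z_m$ from $\FS(H)$.

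Colour each tuple $(z_1,\dots,z_m)$ by the finitely many bits ``$f(x_\pi)=f(y_\pi)$?'', one bit for each pattern $\pi$ on $m$ blocks. Applying Milliken--Taylor successively for $m=1,2,\dots$, and diagonalising the resulting block sequences, we obtain a single $H$ such that, for each pattern $\pi$, the truth of $f(x)=f(y)$ depends only on $\pi$ and not on the particular blocks.

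\textbf{Step 2: identify the induced equivalence relation.} Now $f(x)=f(y)$ is a pattern-invariant equivalence relation $\approx$ on $\FS(H)$, and we must show it is one of the five listed relations. Since $\approx$ depends only on patterns, we can derive implications between patterns using transitivity and the freedom to insert fresh blocks:
\begin{itemize}
\item If some pair with equal $\min\alpha$ and equal $\max\alpha$ but different interiors is $\approx$-related, then by chaining through intermediate elements every such pair is related.
\item If two elements with different minimum blocks are related, then the minimum is irrelevant, and similarly for the maximum.
\item If two elements whose minimum blocks differ and whose maximum blocks differ are related, then $\approx$ is total.
\end{itemize}
Combining these yields exactly five possibilities: all related; related iff $\min$ agree; related iff $\max$ agree; related iff both agree; related iff equal. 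These are cases (1)--(5).

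\textbf{Expected obstacle.} The main obstacle is the case analysis in Step 2. Two sub-steps need care. First, ruling out ``exotic'' relations, e.g.\ those depending on the second block or on the number of blocks. Here one exploits that any two elements can be connected by chains of patterns, with fresh blocks inserted between them. Second, showing that ``equal $\min$ and $\max$'' and injectivity cannot mix in some intermediate way. We would first attempt to compare $x$ with $x\cup z$ for a fresh interior block $z$: if $f(x)=f(x\cup z)$ for one such pattern, homogeneity propagates this to all interior modifications.

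A secondary technical point is the diagonalisation in Step 1, which ensures homogeneity for all $m$ at once. This is handled by a standard fusion argument: fix the first $n$ blocks, then thin the tail of the sequence.
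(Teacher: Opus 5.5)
\textbf{The citation.} Citing Taylor's theorem is exactly what the paper does. It supplies no proof of its own, only the remark that Taylor's formulation (finite unions of a block sequence of finite sets) transfers to the stated one via the binary-expansion bijection $x\mapsto\alpha(x)$. Under that bijection, $\min$ and $\max$ of a finite union become $\min\alpha$ and $\max\alpha$. As a citation, ideally with that translation stated explicitly, your proposal is fine.

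\textbf{The gap in the sketch.} The self-contained sketch has a genuine gap in Step 1, and it lies exactly where the difficulty of the theorem is. Fusion (fix the first $n$ blocks, thin the tail) does not give homogeneity for all pattern lengths at once. At best, the truth of $f(x)=f(y)$ then depends only on the pattern for pairs whose blocks all lie beyond roughly the first $m$ blocks of $H$, where $m$ is the pattern length. Pairs meeting the early blocks remain uncontrolled.

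No generic bookkeeping can fix this. For the colourings $c_m(z_1,\dots,z_m)=1$ iff $\min\alpha(z_1)<2^m$, no infinite block sequence is homogeneous for every $c_m$ simultaneously. Two elements of $\FS(H)$ can have merged patterns of arbitrary length, e.g.\ $h_0+h_2+\cdots+h_{2n}$ versus $h_1+h_3+\cdots+h_{2n+1}$. So the hypothesis of Step 2, that $f(x)=f(y)$ depends only on the pattern, is never established. All of Step 2 relies on it, including your proposed comparison of $x$ with $x\cup z$.

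Transitivity cannot substitute for it where it matters most, namely the injective case. If no short pattern is related, there are no positive relations to chain. A coincidence $f(s\cup g)=f(s'\cup g)$, for fixed $s\neq s'$ with a long pattern and all $g$ beyond them, is invisible to every short-pattern homogeneity. So some genuine thinning argument is needed, and your sketch does not contain one.

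\textbf{One way to repair your route.} Homogenise only the finitely many patterns of length at most $4$. This takes finitely many applications of Milliken--Taylor and no fusion. Then proceed in cases.
\begin{enumerate}
\item If $f(x)=f(y)$ for $x<y$, you get (1).
\item Otherwise, if $f(x)=f(x\cup y)$ (resp.\ $f(y)=f(x\cup y)$) for $x<y$, every element is equivalent to its first (resp.\ last) $H$-block. This gives (2) (resp.\ (3)).
\item Otherwise, if $f(x\cup z)=f(x\cup y\cup z)$ for $x<y<z$, every element is equivalent to the union of its first and last blocks. Comparing such reduced elements only involves patterns of length at most $4$, and a short check of these gives (4).
\item In the remaining case, $f(t\cup y)\neq f(t\cup z)$ and $f(t\cup y\cup w)\neq f(t\cup z\cup w)$ hold whenever $t<y<z<w$ ($t$ possibly empty), since otherwise one of the earlier cases would apply.
\end{enumerate}
In the last case, injectivity needs a recursive construction with look-ahead. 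Fix an idempotent ultrafilter $p$ for ordered unions on $\FS(H)$, and choose $g_0<g_1<\cdots$ maintaining the invariant that $\{g: f(s\cup g)=f(s'\cup g)\}\notin p$ for all distinct $s,s'$ generated so far. Pick each $g_n$ outside these sets and outside the sets $\{g: f(t\cup g)=f(v)\}$.
\begin{itemize}
\item Idempotence propagates the invariant to the new pairs $s\cup g_n,\ s'\cup g_n$.
\item The first inequality above shows that $\{g: f(t\cup g)=f(v)\}\notin p$.
\item The second inequality shows that $\{h:\{g: f(t\cup h\cup g)=f(t'\cup g)\}\in p\}\notin p$.
\end{itemize}
Together these keep the invariant and give injectivity on $\FS(\{g_n\})$.
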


\begin{discussion} Each $h_n\in H$ corresponds to a finite subset $\alpha(h_n)\subseteq E$ whereby $h_n=\sum \alpha(h_n)$. The condition that $\max \alpha(h_n)<\min \alpha(h_{n+1})$ tells us that the $\alpha(h_n)$'s are disjoint for distinct $n$'s, and are thus relatively spaced out across $\w$. This will play a key role in our proof of Theorem~\ref{thm:Sumn-Hind}.
\end{discussion}

\begin{discussion} The original  Canonical Hindman's Theorem is usually stated differently \cite{Tay76}: given 
	$$f\colon [\w]^{<\w}\to \w,$$ there exists a {\em disjoint collection} $$\calD:=\{d_n \in [\w]^{<\w}\mid d_i\cap d_j=\emptyset\, ,\,\,\text{whenever}\, i\neq j\}$$ such that $f$ assumes one of five canonical forms on $s,t\in \mathrm{FU}(\calD)$.\footnote{Here $\mathrm{FU}(\calD)$ denotes the family of all finite unions of elements of $\calD$ (excluding the empty union).} Our Theorem~\ref{thm:Hindman} reparametrises this by encoding finite subsets of $\w$ through binary expansion using the set
	$E:=\{2^n\mid n\in\w\}.$
In fact, any subset $E\subseteq\w$ that is {\em sparse} in the technical sense of \cite[pp.~862]{FKK24} yields the same reformulation, but we fix this particular choice for concreteness.
	
\end{discussion}

\subsection{Positive Difference Ideal}\label{sec:pos-dif-ideal}  We will eventually want to establish various non-linearity results involving $\Ram$ and $\calH$. As a warm-up, first define
\begin{align}\label{eq:pos-dif}
\Delta\colon[\w]^\w&\longrightarrow [\w]^\w\\
B&\longmapsto \{a-b\mid a,b\in B\,, a>b\,\}. \nonumber 
\end{align}
That is, $\Delta(B)$ is the set of all positive differences of distinct elements in $B$. This naturally leads to the {\em positive difference ideal}, defined as
$$\calD:=\Big\{ A\subseteq \w \Bmid \forall B\in [\w]^\w.\,(\Delta(B)\not\subseteq A)\Big \}\,.$$
That $\calD$ indeed defines an ideal follows from Ramsey's Theorem, see \cite[Proposition 4.1]{Fi13}. 

\begin{observation}\label{obs:difference-ideal} $\Fin\subsetneq \calD\subsetneq\calH$. In particular, $\calD$ contains all modular sets
	$$A_m:=\{n\in\w\mid m\nmid n\}\quad (m\geq 2)\,.$$
\end{observation}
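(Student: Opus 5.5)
We need to prove four things: $\Fin\subseteq\calD$, $\Fin\neq\calD$, $\calD\subseteq\calH$, $\calD\neq\calH$, and that every modular set $A_m$ lies in $\calD$. The plan is to handle the modular sets first, since they supply the witness for the first strict inclusion.

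\emph{Modular sets.} Fix $m\geq 2$ and an infinite $B$. By the pigeonhole principle, two distinct elements $a>b$ of $B$ have the same residue mod $m$. Then $m\mid a-b$, so $a-b\in\Delta(B)\setminus A_m$. Hence $\Delta(B)\not\subseteq A_m$ for every $B\in[\w]^\w$, i.e.\ $A_m\in\calD$.

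\emph{$\Fin\subsetneq\calD$.} Since $\Delta(B)$ is infinite for infinite $B$ (fix $b=\min B$ and let $a$ range over $B$), no finite set contains any $\Delta(B)$, so $\Fin\subseteq\calD$. Strictness follows because $A_2$, the set of odd numbers, is infinite and lies in $\calD$.

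\emph{$\calD\subseteq\calH$.} The key step is to show that every $\FS(B)$ contains some $\Delta(B')$. Given infinite $B$, pick $b_0<b_1<\cdots$ in $B$ and set $s_k:=b_0+\cdots+b_k$. For $j>k$ we have $s_j-s_k=b_{k+1}+\cdots+b_j\in\FS(B)$, so with $B':=\{s_k\}_k$, which is infinite, we get $\Delta(B')\subseteq\FS(B)$. Now take $A\in\calD$ and suppose $\FS(B)\subseteq A$ for some infinite $B$. Then $\Delta(B')\subseteq A$, contradicting $A\in\calD$. So $A\in\calH$.

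\emph{$\calD\neq\calH$.} This is the step requiring a genuine idea. My first candidate is the set of odd numbers $A_2$, but it lies in $\calD$. It also lies in $\calH$, since the sum of two odd numbers is even and so no $\FS(B)$ is contained in it. So we need a set in $\calH\setminus\calD$. A natural candidate is
$$A:=\{n\mid \text{the binary expansion of } n \text{ has an odd number of } 1\text{s}\}.$$

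Take $A\in\calH$ first. Given infinite $B$, pass to elements $b,b'$ whose binary supports are disjoint; this is possible by choosing $b'$ with $\min\alpha(b')>\max\alpha(b)$. The popcount of $b+b'$ is then the sum of the two popcounts, so if $b,b'\in A$ then $b+b'\notin A$. Hence $\FS(B)\not\subseteq A$.

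To show $A\notin\calD$ we need an infinite $B$ with $\Delta(B)\subseteq A$. Try $B=\{2^{2k}-1\}$ or similar. Differences $2^{j}-2^{i}$ with $j>i$ have binary representation a block of $j-i$ ones, so with $B=\{2^{k}\mid k\in\w\}$ the differences $2^j-2^i$ have popcount $j-i$. Restricting the exponents to a set whose pairwise differences are all odd is impossible beyond two elements. I would instead choose $B=\{2^{k}-c_k\}$ adjusted, or switch to a different witness.

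The cleaner route is to use the fact, taken from the literature, that $\calH\neq\calD$ via a Ramsey-type example (cf.\ \cite{Fi13}). This is the main obstacle, and I would first try to find an explicit $A$ with some $\Delta(B)\subseteq A$ built from a very fast-growing $B$, while keeping $A\in\calH$ by a parity-of-blocks invariant.
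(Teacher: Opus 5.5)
Your arguments for the modular sets, for $\Fin\subsetneq\calD$, and for $\calD\subseteq\calH$ are correct and follow the paper. The partial-sum trick $\Delta(\{s_k\})\subseteq\FS(B)$ spells out the inclusion the paper only calls clear. For the strictness $\calD\neq\calH$ the paper gives no argument of its own; it simply cites \cite[Proposition 4.2]{Fi13}. So if you fall back on that citation, you have exactly the paper's proof.

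Your explicit attempt, however, is not yet a proof, for two reasons.

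First, you want to show that $A=\{n : s(n)\text{ odd}\}$ lies in $\calH$, where $s(n)$ is the binary digit sum. You cannot in general find $b,b'\in B$ with $\min\alpha(b')>\max\alpha(b)$: if every element of $B$ is odd, every support contains $2^0$. The fix is to work inside $\FS(B)$. Given $b\in B$ and $2^K>b$, some residue class mod $2^K$ contains infinitely many elements of $B\setminus\{b\}$. The sum $y$ of $2^K$ of them lies in $\FS(B\setminus\{b\})$ and is divisible by $2^K$. Then $b,y,b+y\in\FS(B)$ and $s(b+y)=s(b)+s(y)$, so not all three lie in $A$.

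Second, you leave $A\notin\calD$ open, although it is true. Take $b_n:=433\cdot\frac{32^n-1}{31}$ and put $d=m-n$. Then $b_m-b_n=32^n\cdot 433\cdot\frac{32^d-1}{31}$. Since $433=13\cdot 31+30$,
\[
433\cdot\tfrac{32^d-1}{31}=13\cdot 32^d+17+\textstyle\sum_{0<i<d}30\cdot 32^i ,
\]
and the summands have pairwise disjoint binary supports. Hence $s(b_m-b_n)=3+2+4(d-1)=4d+1$, which is odd. With both points supplied, your candidate does witness $\calD\subsetneq\calH$.

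A simpler explicit witness is $\Delta(\{2^n\mid n\in\w\})$, the set of numbers whose binary expansion is a single block of $1$s. It is trivially outside $\calD$. It lies in $\calH$ by the same mod-$2^K$ trick: take a positive $b\in B$ with $b<2^{K-1}$. Then $b+y\in\FS(B)$ has a $0$ digit at position $K-1$ between two $1$s, so it is not a single block.
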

\begin{proof} It is clear that $\Fin\subseteq \calD\subseteq\calH$. Next, fix any modular set $A_m$ for some $m$. Given any infinite $B\subseteq\w$, there exists two elements in $B$ which are congruent mod $m$ by the pigeonhole principle. So their difference is divisible by $m$ -- which is excluded from $A_m$. Hence, $\Delta(B)\not\subseteq A_m$ for every infinite $B$. Since $A_m$ is infinite, this implies $\Fin\subsetneq \calD$. To see why $\calD\subsetneq\calH$, see \cite[Proposition 4.2]{Fi13}.
\end{proof}

Observation~\ref{obs:difference-ideal} highlights a key detail: $\calD$ collects subsets $A\subseteq\w$ that avoid certain kinds of structured differences that inevitably emerge in any infinite subset. This is quite a different organising principle from collecting subsets whose tail ends have bounded weight, the structural limitation built into the definition of summable ideals. 
Our present goal is to make this difference precise. 


\smallskip

We start by introducing a useful combinatorial notion: a subset $A\subseteq\w$ is {\em eventually $\calD$-sparse} if for every $d\in\w$ there exists at most finitely many pairs $(m,n)\in A^2$ with $m>n$ such that $d=m-n$. In particular:

\begin{lemma}\label{lem:sparse} If $A\subseteq\w$ is eventually $\calD$-sparse, then $A\in\calD$.
\end{lemma}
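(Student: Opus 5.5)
We have to show that for every infinite $B\subseteq\w$, $\Delta(B)\not\subseteq A$. The plan is a direct contradiction argument that uses only the definition of eventual $\calD$-sparseness; Ramsey's theorem is not needed. Fix an eventually $\calD$-sparse $A$ and an infinite $B$, and suppose for contradiction that $\Delta(B)\subseteq A$. Enumerate $B$ increasingly as $b_0<b_1<b_2<\cdots$.

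The key step is to produce a single difference $d$ that is realised by infinitely many distinct pairs in $A$. Take $d:=b_1-b_0$; this lies in $\Delta(B)$, hence in $A$. For each $k\geq 1$ consider the two differences $m_k:=b_k-b_0$ and $n_k:=b_k-b_1$. Both belong to $\Delta(B)\subseteq A$ (for $k=1$, $n_1=0$ is not a positive difference, so start from $k\geq 2$). We have $m_k>n_k$ and
$$m_k-n_k=b_1-b_0=d.$$
Since the $b_k$ are strictly increasing, the pairs $(m_k,n_k)$ for $k\geq 2$ are pairwise distinct, and they all lie in $A^2$. Thus $d$ is realised as $m-n$ with $m>n$ by infinitely many pairs from $A$, contradicting eventual $\calD$-sparseness.

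It follows that $\Delta(B)\not\subseteq A$ for every $B\in[\w]^\w$, and hence $A\in\calD$ by the definition of the positive difference ideal. The only point needing care is that the constructed pairs consist of \emph{positive} differences; this is why $k=1$ is excluded, since it gives $n_1=0$. With that handled, there is no real obstacle.
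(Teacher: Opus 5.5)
Your proof is correct and is essentially the paper's argument. The paper picks $b<c$ in the infinite set and infinitely many larger elements $d_i$, then uses the pairs $(d_i-b,\,d_i-c)$, all of which realise the fixed difference $c-b$; your choice $b=b_0$, $c=b_1$, $d_i=b_k$ for $k\geq 2$ is exactly this construction.
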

\begin{proof} Suppose for contradiction $A$ is eventually $\calD$-sparse but $A\notin\calD$. Then, there exists an infinite $E\subseteq\w$ with $\Delta(E)\subseteq A$. Pick $b<c\in E$, and let 
	$$\delta:=c-b\,.$$
Now take infinitely many $d_i\in E$ with $d_i>c$ and strictly increasing. For each $i$, define
$$x_i:= d_i-b\,,\,\, y_i:=d_i-c\,.$$	
Then $x_i,y_i\in\Delta(E)\subseteq A$, and 
$$x_i-y_i=(d_i-b)-(d_i-c)=c-b=\delta\,.$$
Hence, the pair $(x_i,y_i)\in A^2$ realises the same difference $\delta$ for every $i$. Moreover, all these pairs $(x_i,y_i)$ are distinct since $d_i$ is strictly increasing, and so $x_i=d_i-b$ is strictly increasing. This gives infinitely many distinct pairs in $A$ with the same difference, contradicting eventual $\calD$-sparseness.
\end{proof}

\begin{theorem}\label{thm:pos-dif} Recall the {\em canonical summable ideal}  $$\Sumn:=\Big\{A\subseteq \w \Bmid \sum_{n\in A}\frac{1}{n+1}<\infty\Big\}\,.$$ \underline{Then}, $\calD\not\glt \Sumn$.
	
\end{theorem}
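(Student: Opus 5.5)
We argue by contradiction, following the blueprint of Section~\ref{sec:blueprint}. Suppose $\Phi$ witnesses $\calD\glt\Sumn$, i.e.\ $\calD^*\glt(\Sumn)^*$. Let $\nu:=\nu^{\Sumn}_\Phi$ be the labelling function and $T_\Phi$ the canonical tree.

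\emph{Root labelled by a number.} If $\nu(\o)=c\in\w$, we use $\{c\}\in\Fin\subseteq\calD$ and set $A:=\w\setminus\{c\}\in\calD^*$. Lemma~\ref{lem:separation-1} then produces a path of the witness tree $T_A$ with $\Phi$-value $c\notin A$, a contradiction, exactly as in Case~1 of Claim~\ref{claim:Pw-second-direction}.

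\emph{Root labelled $\bot$.} By Lemma~\ref{lem:separation-2}, $T_\Phi$ has critical nodes; enumerate them as $\{\tau_i\}_{i\in\w}$ (repetitions allowed). For each $i$ set
$$X_i:=\{x\mid \nu(\tau_i\fr x)\in\w\}.$$
Criticality gives two facts. First, $\{x\mid\nu(\tau_i\fr x)=\bot\}\in\Sumn$, so $X_i$ has infinite harmonic weight. Second, since $\nu(\tau_i)=\bot$, no label is taken on an $(\Sumn)^*$-positive set, so each fibre $X_i[\ell]:=\{x\mid\nu(\tau_i\fr x)=\ell\}$ has finite harmonic weight. Consequently, removing finitely many fibres from $X_i$, or any finite set of successors, still leaves infinite weight. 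This is the replacement for the weight bookkeeping in Cases~2a--2c.

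\emph{Diagonal construction.} We diagonalise over $k=\lranglet{i}{b}$ and build finite sets $D_k\subseteq X_i$ with harmonic weight $\geq 1$, pairwise disjoint for a fixed $i$, together with label blocks $L_k:=\{\nu(\tau_i\fr x)\mid x\in D_k\}$. Let $M_{k-1}$ bound all labels used so far, and choose a threshold $N_k>2M_{k-1}+k$. At stage $k$:
\begin{enumerate}[label=(\alph*)]
\item Discard the finitely many fibres with label $\leq N_k$, and the finitely many successors already used. The remainder $Y$ still has infinite weight.
\item Split $Y$ into $k+1$ classes according to the residue of the label mod $k+1$. Some class has infinite weight.
\item Pick a finite $D_k$ inside that class with weight $\geq1$.
\end{enumerate}
Two bounds follow. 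Distinct labels inside $L_k$ share a residue, so they differ by at least $k+1$. If $m\in L_k$ and $n\leq M_{k-1}$ lies in an earlier block, then $m-n\geq m/2\geq N_k/2$.

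Set $C:=\bigcup_k L_k$. For a fixed difference $d$, the within-block bound allows pairs only in blocks with $k<d$, and the cross-block bound allows pairs only when $N_k\leq 2d$. Both leave finitely many blocks, each contributing finitely many pairs. Hence $C$ is eventually $\calD$-sparse, and Lemma~\ref{lem:sparse} gives $C\in\calD$. On the other side, $D[i]:=\bigcup_b D_{\lranglet{i}{b}}$ has infinite harmonic weight, so $D[i]$ is $(\Sumn)^*$-positive.

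\emph{Conclusion.} With $A:=\w\setminus C\in\calD^*$, let $T_A$ be the $(\Sumn)^*$-branching witness tree. By Lemma~\ref{lem:separation-2}, $T_A$ contains some critical node $\tau_i$. By Observation~\ref{obs:positive-null}, there is $x\in D[i]$ with $\tau_i\fr x\in T_A$. Its label lies in $C$, so Lemma~\ref{lem:separation-1} yields $p\in[T_A]$ with $\Phi(p)\in C$, contradicting $\Phi[T_A]\subseteq A$.

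\emph{Main obstacle.} The delicate step is making $C$ sparse without losing weight. Each individual fibre may carry arbitrarily small weight, so a naive greedy choice of single lacunary labels could yield only a convergent total. The residue-class pigeonhole in step~(b) is meant to handle this: it controls differences within a block while still selecting a finite chunk of weight $\geq1$, and the fast-growing thresholds $N_k$ control differences across blocks.
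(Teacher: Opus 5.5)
Your proposal is correct and follows essentially the same route as the paper: restrict to critical nodes, use the finite harmonic weight of each label fibre together with a residue-class pigeonhole to extract finite chunks of weight $\geq 1$ with rapidly growing labels, and conclude via eventual $\calD$-sparseness and Lemma~\ref{lem:sparse}. The only difference is bookkeeping. The paper takes the modulus $\max\Delta(\bigcup_{j<k}C_j)+1$, so that each difference is realised at a single stage. Your modulus $k+1$ and doubling threshold $N_k>2M_{k-1}+k$ instead confine each difference $d$ to the finitely many blocks with $k<2d$, which works equally well.
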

\begin{proof} Suppose for contradiction there exists $\Phi$ witnessing $\calD\glt\Sumn$. Denote the labelling function as $\nu:=\nu_\Phi^{\Sumn}$. If the root of $T_\Phi$ is labelled by $c\in\w$, then we get a contradiction by the same argument in Claim~\ref{claim:Pw-second-direction}. Hence, as before, assume $\nu(\o)=\bot$ and enumerate the critical nodes $\{\tau_i\}_{i\in\w}$.
	
\smallskip		
To begin, notice each critical node $\tau_i$ must have infinite distinct $\nu$-labels amongst its successors. Why? Suppose for contradiction there exists only finitely many labels, say $c_0,\dots, c_m$. Since $\tau_i$ is critical, we have 
$$S_i:=\{x\mid \nu(\tau_i\fr x)=c_i\}\in\Sumn\qquad 0\leq i\leq m\,,$$
and so we have finite harmonic weight $\sum_{x\in S_i}\frac{1}{x+1}<\infty$ for each $i$. But since the $S_i$'s give a finite partition of $\w$, this implies 
$$\sum_{x\in\w}\frac{1}{x+1}= \sum_{x\in S_0}\frac{1}{x+1} + \sum_{x\in S_1}\frac{1}{x+1} + \dots  + \sum_{x\in S_m}\frac{1}{x+1} <\infty\,, $$
a contradiction. This sets up our diagonalisation scheme below.

\smallskip

\noindent \underline{Goals and Conditions.} At stage $k=\lranglet{i}{b}$ of the diagonalisation, we choose a subset of successors of $\tau_i$ with relatively large harmonic weight, yet whose labels are still sufficiently sparse. Specifically, this involves constructing three sequences
$$\{D_k\}_{k\in\w}\,\qquad \{C_k\}_{k\in\w}\, \qquad \,\{L_k\}_{k\in\w}\,$$
 whereby:
\begin{itemize}
\item $D_k$ denotes a chosen subset of successors of the critical node $\tau_i$ at stage $k$;
	\item $C_k$ denotes the corresponding set of labels, $C_k:=\{\nu(\tau_i\fr x )\mid x\in D_k\}\,;$
	\item $L_k$ denotes the set of previous label differences,  $$L_k:=\Delta (\bigcup_{j<k} C_j)\,.$$
\end{itemize}
These are chosen to satisfy the conditions:
\begin{enumerate}[label=(\alph*)]

	\item  {\em (Strong Sparseness).} $C_k$ is finite, and 
	$$(x,y)\in C_k\times \bigcup_{j\leq k} C_j \implies |x-y|\notin L_k\,,$$
i.e.	no new pair involving $C_k$ realises a difference in $L_k$. 

	\smallskip
	
	\item {\em (Well-labelled).} No successor in $D_k$ is labelled $\bot$; in particular, $C_k\subseteq \w$.
	
	\smallskip 
	
%
	
	\item {\em (Positivity).} $$\sum_{x\in D_k}\frac{1}{x+1}\geq 1\,\,\,.$$
	
\end{enumerate} 

\noindent \underline{Diagonalisation.} At stage $k=\lranglet{i}{b}$, suppose $D_0,\dots, D_{k-1}$ have been chosen. In which case, both $\bigcup_{j<k}C_j$ and $L_k$ are finite. This allows us to define the upper bounds 
$$ \n_k:=\max \bigcup_{j<k} C_j\qquad \text{and}\qquad \m_k:= \max L_k + 1\,.$$
In light of the previous label choices, we proceed to rule out certain successors of the critical node $\tau_i$. Start by defining
$$ S_k:=\Big\{x\in\w \Bmid \nu(\tau_i\fr x)\notin \{\bot\} \,
\,\text{and}\,\, \nu(\tau_i\fr x)>\n_k + \m_k \Big\}\,\qquad T_k:=\Big\{\,\nu(\tau_i\fr x) \Bmid x\in S_k\,\Big\}\,.$$
This imposes two conditions: (1) exclude successors of $\tau_i$ labelled $\bot$; and (2), the successor must have a label sufficiently bigger than $\n_k$. Notice this {\em a fortiori} implies that $\nu(\tau_i\fr x)\notin \bigcup_{j<k}C_j$. Moreover, notice $T_k$ defines an infinite set of labels -- every critical node $\tau_i$ has infinite distinct succesor labels, and the conditions in $S_k$ only rule out a finite subset of such labels. 
\smallskip

Next, an additional refinement. Start by partitioning $\w$ into residue classes $\mathrm{mod}$ $\m_k$:
$$\w=\bigsqcup_{r=0}^{\m_k-1} \m_k \N + r\,.$$
Defining 
$$T_{k,r}:=(\m_k\N + r )\cap T_k\,,\qquad\text{and}\qquad S_{k,r}:=\{x\in\w \mid \nu(\tau_i\fr x)\in T_{k,r}\}\,,$$
this induces a partition of $S_k$ and  $T_k$ as below
$$S_k=\bigsqcup_{r=0}^{\m_k-1} S_{k,r}\qquad\text{and}\qquad  T_k=\bigsqcup_{r=0}^{\m_k-1} T_{k,r}\,.\qquad $$
Notice all labels in each residue class $T_{k,r}$ are at least $\m_k$ apart, and at least $\m_k$-distance away from the biggest label $\n_k \in \bigcup_{j<k}C_j$ -- this will be important for guaranteeing strong sparseness. Moreover, we have the following key claim:

\begin{claim}\label{claim:pos-dif-harmonic} At least one $S_{k,r}$ must have infinite harmonic weight, i.e. 
	$$\sum_{x\in S_{k,r}}\frac{1}{x+1}=\infty\,.$$
\end{claim}
\begin{proof}[Proof of Claim] Suppose for contradiction 
	$$ \sum_{x\in S_{k,r}} \frac{1}{x+1} <\infty\qquad\text{for all}\, \,\,0\leq r \leq \m_k-1\,.$$
	This would imply
	$$\sum_{x\in S_{k}}\frac{1}{x+1}= \sum_{x\in S_{k,0}}\frac{1}{x+1} + \sum_{x\in S_{k,1}}\frac{1}{x+1} + \,\cdots\,  + \sum_{x\in S_{k,\m_k-1}}\frac{1}{x+1} <\infty\,.$$
	But this is clearly false -- the full successor set of $\tau_i$ has divergent harmonic sum, and $S_k$ has only removed finitely many successor labels, each of finite harmonic weight (since $\tau_i$ is critical). 
\end{proof}

We now finish our diagonalisation argument. By Claim~\ref{claim:pos-dif-harmonic}, at least one residue class $S_{k,r}$ must have infinite harmonic weight. Fix such an $r$. We may then choose a finite subset $D_{k}\subseteq S_{k,r}$ such that
 $$\sum_{x\in D_k} \frac{1}{x+1} \geq 1,$$
and define $C_k:=\{\nu(\tau_i\fr x) \mid x\in D_k\}$, clearly a finite set as well. Reviewing our requirements from before, one checks that our construction guarantees strong sparseness, positivity and well-labelled successors at every stage of the diagonalisation.

\medskip

\noindent \underline{Verification.} We have now done most of the combinatorial legwork; it remains to implement the proof strategy (``blueprint'') presented at the start of the section. \begin{itemize}
	\item Define
	$$D[i]:=\bigcup_{b\in\w} D_{\lranglet{i}{b}}.$$
	By positivity, each $D_{\lranglet{i}{b}}$ has harmonic weight at least 1 for all $b$. It is also clear $D_{\lranglet{i}{b}}$ are disjoint across different $b$'s since they correspond to different label sets. Hence,
	$$\sum_{x\in D[i]} \frac{1}{x+1} = \infty\,,$$
	and so $D[i]$ is $(\Sumn)^*$-positive. 
	\item Collect all the successor labels into the set
	$$C:=\bigcup_{k} C_k.$$
At each stage $k$, the label set $C_k$ is chosen so that no pair $$(x,y)\in C_k\times \bigcup_{j\leq k} C_j$$ 
realises a difference already present in the set of previous label differences
$$L_k = \Delta\Big(\bigcup_{j<k} C_j\Big).$$
Hence, no difference is ever realised in more than one stage of the diagonalisation. Moreover, since each $C_k$ is finite, it follows that every difference in $\Delta(C)$ is realised by only finitely many pairs. Hence, $C$ is eventually $\calD$-sparse, and so $C\in\calD$ by Lemma~\ref{lem:sparse}.

\end{itemize}

\medskip 

\noindent\underline{Contradiction.} Since $C\in\calD$, its complement $A:=\w\setminus C$ belongs to its dual filter $\calD^*$. Since $\Phi$ witnesses $\calD\glt\Sumn$ by hypothesis, there exists a $(\Sumn)^*$-branching tree $T_A$ such that $\Phi[T_A]\subseteq A$. By Lemma~\ref{lem:separation-2}, $T_A$ has a critical node $\sigma=\tau_i$. By definition, $\{n\mid \tau_i\fr n\in T_A\}\in (\Sumn)^*$ since $T_A$ is $(\Sumn)^*$-branching. Since $D[i]$ is $(\Sumn)^*$-positive, there exists some $x\in D[i]$ such that $\tau_i\fr x\in T_A$. Denote its label as $c:=\nu(\tau_i\fr x)$. By Lemma~\ref{lem:separation-1}, there must exist some $p\in [T_A]$ such that $\Phi(p)=c$. However, we have $c\in C$ by construction, and so $\Phi(p)=c\notin A$, contradicting  $\Phi[T_A]\subseteq A$.
\end{proof}

Our primary motivation behind establishing Theorem~\ref{thm:pos-dif} lies in its structural consequences for the Hindman and Ramsey ideals.

\begin{corollary}\label{cor:first-neg-H-R} The following negative results hold:
	\begin{enumerate}[label=(\roman*)]
		\item $\calH\not\glt \Sumn$.
		\item $\Ram\not\glt \Sumn$.
	\end{enumerate}	
\end{corollary}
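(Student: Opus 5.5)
The plan is to deduce both items from Theorem~\ref{thm:pos-dif} by transitivity of $\glt$, which is a preorder by Theorem~\ref{thm:main-thm}. It suffices to show $\calD\glt\calH$ and $\calD\glt\Ram$. Then $\calH\glt\Sumn$ (resp.\ $\Ram\glt\Sumn$) would give $\calD\glt\Sumn$, contradicting Theorem~\ref{thm:pos-dif}.

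For (i), I use Observation~\ref{obs:difference-ideal}, which gives $\calD\subseteq\calH$. Then the argument of Claim~\ref{claim:Pw-first-direction} applies verbatim. Let $\Phi$ be the partial continuous function induced by the identity, reading off the first entry of a string. Given $A\in\calD^*$, we have $\w\setminus A\in\calD\subseteq\calH$, so $A\in\calH^*$. Hence the height-one tree with root-successors $A$ is $\calH^*$-branching, and $\Phi[T]\subseteq A$. This gives $\calD\glt\calH$.

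For (ii), $\Ram$ lives on $[\w]^2$, which is identified with $\w$ via the fixed bijection $\code{-}$ of Convention~\ref{conv:enum}. I exhibit a classical Kat\v{e}tov reduction and invoke Theorem~\ref{thm:main-thm}(i), which says $\lk$ implies $\glt$. Define
\[
h\colon\w\to\w,\qquad h(\code{\{a,b\}}):=|a-b| .
\]
This is well defined and positive since pairs are distinct. The key check is that $A\in\calD$ implies $h^{-1}[A]\in\Ram$. Suppose instead that $h^{-1}[A]$ contained $[B]^2$ for some infinite $B$. Then every positive difference $a-b$ with $a>b$ in $B$ would lie in $A$, i.e.\ $\Delta(B)\subseteq A$, contradicting $A\in\calD$. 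Dually, for $A\in\calD^*$ we get $h^{-1}[A]\in\Ram^*$. So $\calD^*\lk\Ram^*$, hence $\calD\glt\Ram$.

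The only point needing care is bookkeeping, not combinatorics. One must check that the dualisation conventions match: Kat\v{e}tov on filters versus ideals, and $\Ram$ transported along $\code{-}$. One must also check that Theorem~\ref{thm:main-thm}(i) indeed yields $\lk\Rightarrow\glt$ for arbitrary upper sets. Alternatively, a one-step witness $\Phi(x\fr\sigma)=h(x)$ gives the reduction directly: the height-one tree with successor set $h^{-1}[A]\in\Ram^*$ is mapped into $A$.
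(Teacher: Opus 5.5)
Your proposal is correct and follows the paper's proof. For (i) you deduce the result from $\calD\subseteq\calH$ via the identity witness and transitivity against Theorem~\ref{thm:pos-dif}; for (ii) you use the Kat\v{e}tov reduction $\calD\lk\Ram$ given by the same map $\{a,b\}\mapsto|a-b|$, and your check that $h^{-1}[A]\in\Ram$ for $A\in\calD$ is a slightly more direct phrasing of the paper's argument via $\varphi[\varphi^{-1}[X]]\subseteq X$.
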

\begin{proof} \hfill
\begin{enumerate}[label=(\roman*):]
	\item This is easy. Since $\calD\subseteq\calH$ (Observation~\ref{obs:difference-ideal}), we have $\calD\glt\calH$. Hence, if $\calH\glt\Sumn$, then 
	$$\calD\glt\calH\glt\Sumn\,,$$
	contradicting Theorem~\ref{thm:pos-dif} that $\calD\not\glt \Sumn$.	 
	\item \cite[Theorem 7.7 (4)]{FKK24-spaces} shows that $\calD\lk\Ram$ -- in which case, this implies $\calD\glt\Ram$ and the same argument as in item (i) shows that $\Ram\not\glt\Sumn$. 
	
	The original argument showing $\calD\lk\Ram$ makes use of so-called {\em partition regular functions} \cite[\S 3]{FKK24-spaces}. For completeness, we give a more direct proof. Define a function 
	\begin{align*}
	\varphi\colon [\w]^2 &\longrightarrow \w\\
\{a,b\}&\longmapsto |a-b|\,\,.  
	\end{align*}
Notice for any set $F\subseteq\w$, this gives
	$$\varphi\left[[F]^2\right]=\Delta (F).$$
	
Now suppose $A\subseteq [\w]^2$ such that $A\notin\Ram$. By definition, there exists some infinite $B\subseteq\w$ such that $[B]^2\subseteq A$. Clearly then
	$$\Delta(B)=\varphi\left[[B]^2\right]\subseteq \varphi[A],$$
	and so $\varphi[A]\notin\calD$. Hence, given any $X\in\calD$, it must be the case that $\varphi^{-1}[X]\in\Ram$; otherwise, $\varphi^{-1}[X]\notin\Ram$ implies $\varphi\left[\varphi^{-1}[X]\right]\notin\calD$. But $\varphi\left[\varphi^{-1}[X]\right]\subseteq X$, and since $\calD$ is downward closed, this contradicts $X\in\calD$.
\end{enumerate}	
\end{proof}

\begin{discussion}\label{dis:pos-dif-Fin} Theorem~\ref{thm:pos-dif} generalises \cite[Theorem 7.7 (11)]{FKK24-spaces}, which gave a quick proof showing  $\calD\not\lk\Sumn$ in the classical Kat\v{e}tov order. The original argument shows that $\Fin\otimes \Fin \lk \calD$ yet $\Fin\otimes\Fin \not\lk \Sumn$. In which case, $\calD\lk\Sumn$ implies $\Fin\otimes\Fin\lk\Sumn$, a contradiction. Notice, however, this approach fails in the gamified setting since $\Fin\elt \Fin\otimes \Fin$, and so a completely different proof strategy is required.

\end{discussion}

\begin{discussion}[``Sparseness'']	Our definition of ``eventually $\calD$-sparse'' generalises a stronger notion introduced by Filip\'{o}w \cite{Fi13}: a subset $A\subseteq\w$ is called {\em $\calD$-sparse} if every difference $d\in\Delta(A)$ is realised by a unique pair $b,c\in A$ with $d=b-c$. While $\calD$-sparseness also ensures $A\in\calD$, it is too restrictive for our purposes: enforcing global uniqueness forces excessive separation, preventing the accumulation of sufficient harmonic weight. Eventual $\calD$-sparseness relaxes this by allowing controlled repetition of differences.
\end{discussion}

\begin{discussion}[Partitions] Notice partition arguments still play a key role in organising the combinatorics -- especially when controlling the size of the set of forbidden labels $C$. In Theorem~\ref{thm:Pw-emb}, we partitioned $\w$ into large consecutive intervals $\{I_n\}_{n\in\w}$; in Theorem~\ref{thm:pos-dif}, we partition $\w$ into residue classes for some large $\m_k$ (at every stage $k$).
\end{discussion}

\subsection{$\Sumn$ and $\calH$ are incomparable}\label{sec:Hindman} We now identify another instance of non-linearity within the Gamified Kat\v{e}tov order. Our argument generalises a theorem of Filip\'{o}w and Kowitz \cite[Theorem 3.1]{FKK24}, who established the same result in the setting of the classical Kat\v{e}tov order.

\medskip

\begin{theorem}\label{thm:Sumn-Hind} $\Sumn$ and $\calH$ are incomparable in the Gamified Kat\v{e}tov order. That is,
	$$\Sumn\not \glt \calH\qquad\text{and}\qquad \calH\not\glt \Sumn\,\,.$$
\end{theorem}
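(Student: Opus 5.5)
The direction $\calH\not\glt\Sumn$ is already Corollary~\ref{cor:first-neg-H-R}(i), so only $\Sumn\not\glt\calH$ remains. I would follow the blueprint of Section~\ref{sec:blueprint}. Suppose $\Phi$ witnesses $\Sumn\glt\calH$, and let $\nu:=\nu^{\calH}_\Phi$. If $\nu(\o)=c\in\w$, the Case~1 argument of Claim~\ref{claim:Pw-second-direction} applies verbatim, since $\{c\}\in\Sumn$. So assume $\nu(\o)=\bot$ and enumerate the critical nodes $\{\tau_i\}_{i\in\w}$ using Lemma~\ref{lem:separation-2}. For each $i$, the map $f_i(x):=\nu(\tau_i\fr x)$ (with $\bot$ recoded as a fresh value) has two properties. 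First, $\{x\mid f_i(x)=\bot\}\in\calH$. Second, each fibre $\{x\mid f_i(x)=c\}$ with $c\in\w$ lies in $\calH$, because $\tau_i$ is critical.

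Next I would apply the Canonical Hindman Theorem (Theorem~\ref{thm:Hindman}) to $f_i$, restricted first to a subset whose finite sums avoid the $\bot$-fibre. This uses the fact that $\calH$ is an ideal, i.e.\ Hindman's theorem lets us pass to an $\FS$-set inside the complement of an $\calH$-set. This produces $H$ with $\FS(H)$ on which $f_i$ takes one of the five canonical forms. Case (1) is impossible, since then $\FS(H)$ would sit inside a single fibre, which is an $\calH$-set. In the remaining cases, I would pass to an infinite subfamily so that a carefully chosen $\FS$-set has labels spread out. Cases (2)--(4) are all governed by $\min\alpha$ and/or $\max\alpha$.

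The diagonalisation runs over stages $k=\lranglet{i}{b}$, aiming to make the collected label set $C$ lie in $\Sumn$. At stage $k$, I pick blocks $h_{n}$ far out in $H$ and let $D_k$ be a finite-sum set $\FS(\{h_{n_1},\dots,h_{n_\ell}\})$, or an infinite tail that keeps $\min\alpha$ fixed. Each such set should be chosen so that its set $C_k$ of labels is small in harmonic weight, at most $2^{-k}$. In cases (2)--(4) this works because many sums share the same label: fixing $\min\alpha$ and $\max\alpha$ via fixed first and last blocks while varying the middle gives one label for infinitely many elements. In case (5), $f_i$ is injective on $\FS(H)$, so I instead choose labels that are large: every label outside a finite set is large, and infinitely many values with arbitrarily large magnitude are available. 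Taking $D_k$ to be an $\FS$-set whose labels all exceed a threshold, together with a sparse subsequence, keeps the weight summable. The key target is that $D[i]:=\bigcup_b D_{\lranglet{i}{b}}$ is $\calH^*$-positive, meaning it contains $\FS(B)$ for some infinite $B$. To arrange this, I would build the stages for a fixed $i$ coherently: stage $b$ extends the sequence $B_i=\{h_{n_0},h_{n_1},\dots\}$ by one block, and $D_{\lranglet{i}{b}}$ consists of the new sums whose maximal block is $h_{n_b}$. Then $D[i]=\FS(B_i)$.

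The main obstacle is exactly this coherence requirement. All sums with maximum block $h_{n_b}$ must be added at stage $b$, there are $2^b$ of them, and their labels must together have harmonic weight at most $2^{-k}$. In cases (3) and (4) the label depends on $\max\alpha$ (and on $\min\alpha$, which comes from earlier blocks), so at most $b+1$ labels appear. In case (5), $2^b$ distinct labels appear. In every case, a single finite set of labels is determined once $h_{n_b}$ is chosen. Since each critical node has infinitely many distinct labels going to infinity along the tail of $H$ (fibres are in $\calH$, hence finite on such structured sets), I can choose $h_{n_b}$ far enough out that all these labels exceed $2^{b+k}\cdot 2^{k+1}$. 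This bounds the added weight by $2^{-k}$. The point to check carefully is case (2): there the labels depend only on $\min\alpha$, so the new sums reuse old labels, and no new labels arise at all, which is harmless. Once $C:=\bigcup_k C_k\in\Sumn$ is secured, the contradiction follows exactly as in the final paragraph of Theorem~\ref{thm:pos-dif}. Set $A:=\w\setminus C$. Take the witness tree $T_A$ and a critical node $\tau_i\in T_A$. The $\calH^*$-positive set $D[i]$ meets the $\calH^*$-successor set of $\tau_i$ by Observation~\ref{obs:positive-null}. Lemma~\ref{lem:separation-1} then yields a path $p$ with $\Phi(p)\in C$, a contradiction.
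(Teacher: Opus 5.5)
Your architecture is the paper's: the Canonical Hindman Theorem applied to $\nu_i$ at each critical node, diagonalisation over $k=\lranglet{i}{b}$, $D_{\lranglet{i}{b}}$ taken as the sums whose top block is $h_{n_b}$ so that $D[i]=\FS(\{h_{n_b}\}_{b})$, the counts $b+1$ and $2^b$, and the same closing contradiction. However, the step that makes the choice of $h_{n_b}$ possible is justified incorrectly. A fibre lying in $\calH$ need not be finite on $\FS(H)$, nor on sets such as $\{h_{n_s}+h_n\mid n>n_s\}$. For a block sequence, these sets, and $H$ itself, already belong to $\calH$, since no sum of two distinct elements lands back in them. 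So an $\calH$-fibre may contain infinitely many of their elements. What you actually need is the left-to-right implication in the canonical forms. In cases (3)--(5), each value of $\nu_i$ is attained on $\FS(H)$ on a single max-class, a single (min,max)-class, or a single point, and each of these is finite. Hence only finitely many $x\in\FS(H)$ have label $\bot$ or a label at most $M$. Taking $h_{n_b}$ larger than all of them makes every sum with top block $h_{n_b}$ good. This is exactly the paper's Case (5) argument; for Case (4) the paper instead uses the disjoint packets $P_n$.

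Second, your Case (2) remark is false. The one-term sum $h_{n_b}$ has its minimal block in the new position, so stage $b$ \emph{does} introduce a new label, namely $\nu(\tau_i\fr h_{n_b})$. If nothing is imposed on $h_{n_b}$ in this case, as your remark suggests, these labels are pairwise distinct but their harmonic sum can diverge, and then $C\notin\Sumn$. The finiteness argument above is also unavailable here, because min-classes are infinite. The repair is to use that $\nu_i$ is injective on $H$ (distinct minimal blocks) and choose $h_{n_b}$ with $\nu(\tau_i\fr h_{n_b})\in\w$ and $\nu(\tau_i\fr h_{n_b})>2^k$. The paper does exactly this. In this case it also takes $D_{\lranglet{i}{b}}$ to be the sums with \emph{minimal} summand $h_{n_b}$, so that $C_{\lranglet{i}{b}}=\{\nu(\tau_i\fr h_{n_b})\}$ while the union over $b$ is still $\FS(\{h_{n_b}\}_b)$. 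Finally, your preliminary passage to an $\FS$-set avoiding the $\bot$-fibre is unnecessary, and it would need a relativised form of Theorem~\ref{thm:Hindman}. Once $\bot$ is recoded as a fresh value, the canonical form confines it to a single class, and the choice of $h_{n_b}$ avoids that class.
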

\begin{proof} By Corollary~\ref{cor:first-neg-H-R}, we already know that $\calH\not\glt\Sumn$. It remains to show $\Sumn\not\glt \calH$.
	
\medskip	
	
Suppose for contradiction there exists a partial continuous function $\Phi$ witnessing $\Sumn\glt \calH$. Denote the labelling functon as $\nu:=\nu_\Phi^{\calH}$, and construct the canonical tree $T_\Phi$. By the same argument as in Claim~\ref{claim:Pw-second-direction}, the root of $T_\Phi$ cannot be labelled $c\in\w$, so assume $\nu(\o)=\bot$. By Lemma~\ref{lem:separation-2}, it follows that $T_\Phi$ contains at least one critical node. Enumerate all the critical nodes $\{\tau_i\}_{i\in\w}$. This sets up our diagonalisation scheme below.

\smallskip

\noindent \underline{Goal}. Our main objective is to construct a forbidden label set
$$C\in\Sumn\,,$$ such that for every critical node $\tau_i$, we obtain an $\calH^*$-positive set
$$D[i]\notin \calH\,,$$
constructed so that $\nu(\tau_i\fr x)\in C$ for all $x\in D[i]$.

\smallskip 

\noindent \underline{Partition.} For each critical node $\tau_i$, the labelling function $\nu$ induces a map
\begin{align}\label{eq:partition-map}
\nu_i\colon \w&\longrightarrow \w\\
x
& \longmapsto \begin{cases}
\lranglet{0}{0}\qquad\qquad\quad\,\,\,\,\text{if}\,\nu(\tau_i\fr x)=\bot\\
\lranglet{1}{\nu(\tau_i\fr x)} \qquad\quad \text{if}\,\nu(\tau_i\fr x)\in \w
\end{cases} \,\,. \nonumber
\end{align}
Applying the Canonical Hindman Theorem (Theorem ~\ref{thm:Hindman}), fix an infinite set
\begin{equation}\label{eq:H-Hindman}
H[i]=\{h_n\mid n\in\w\}\subseteq \w
\end{equation}
such that $\max \alpha(h_n)<\min \alpha (h_{n+1})$ for every $n$, and the value of $\nu_i$ on the finite sums of $H[i]$ has one of the five canonical forms. [For the reader's convenience: recall that $E:=\{2^n\mid n\in\omega\}$, and each $x\in\FS(E)$ (``finite sum of $E$'') has a unique support $\alpha(x)\subseteq E$ defined by $x=\sum \alpha(x).$]   

\medskip 

\noindent\underline{Diagonalisation.} At stage $k=\lranglet{i}{b}$, we handle critical node $\tau_i$. Our primary task is to recursively extract an increasing subsequence from the predetermined set
$$H[i]:=\{h_n\mid n\in\w\},$$
chosen to be sufficiently sparse so that the associated labels have relatively small harmonic weight. The choices will depend on which canonical form of $\nu_i$ holds on $\FS(H[i])$:
\smallskip

\begin{itemize}
	\item \textbf{Case (1).} Suppose
	$$\forall x,y\in\FS(H[i]).\;(\nu_i(x)=\nu_i(y)).$$
	Then there exists a constant $c\in\omega\cup\{\bot\}$ such that
	$$\FS(H[i])\subseteq\{x \mid \nu(\tau_i\fr x)=c\}\,.$$
	Since $\tau_i$ is critical, we know $\{x \mid \nu(\tau_i\fr x)=c\}\in\calH$. However, $H[i]$ is infinite by construction, and so $\FS(H[i])\notin\calH$, contradicting the inclusion above. Hence Case~(1) cannot occur.
	\medskip
	
	\item \textbf{Case (2).} Suppose 
	$$\forall x,y\in \FS(H[i]).\, \nu_i(x)=\nu_i(y)\iff \min \alpha(x)=\min \alpha(y)\,.$$
By the Canonical Hindman Theorem, we know 
	$$
	\max\alpha(h_n)<\min\alpha(h_{n+1})
	\qquad(n\in\omega). 
	$$
Hence,  deduce that
	$$
	\min\alpha(h_m)\neq \min\alpha(h_n), \qquad (\text{for any}\, m\neq n)
	$$
	and so by the canonical form,
	$$
	\nu_i(h_m)\neq \nu_i(h_n), \qquad (\text{for any}\, m\neq n).
	$$
	In sum, $\nu_i$ is injective on $H[i]$. 

	Returning to the task of recursively extracting a subsequence from $H[i]$, suppose we are at stage $k=\langle i,b\rangle$ of the diagonalisation. By Convention~\ref{conv:enum}, this means we have already chosen $$h_{n_0},\dots, h_{n_{b-1}}\in H[i]$$ in the previous stages. Since $\nu_i$ is injective on an infinite set $H[i]$, it has unbounded image in $\w$. In particular, examining Equation~\eqref{eq:partition-map}, this means we can always choose some $ n_b> n_{b-1}$ such that $\nu(\tau_i\fr h_{n_b})> 2^k$. 
	
	\smallskip
	
	Continuing this process allows us to construct three sequences
$$\{h_{n_b}\}_{b\in\w}\qquad \{D_{\lranglet{i}{b}}\}_{b\in\w}\qquad \{C_{\lranglet{i}{b}}\}_{b\in\w}$$
whereby:
\begin{itemize}[label=$\diamond$]
	\item $\{h_{n_b}\}_{b\in\w}$ is an increasing subsequence in $H[i]$, as constructed above;
	\smallskip
	
	\item 
	$D_{\lranglet{i}{b}}:=\bigg\{ \displaystyle\sum_{t\in F} h_{n_t}\Bmid F\in [\w]^{<\w}, F\neq\emptyset , \min F = b \bigg\}\,.$
	
	That is, $D_{\lranglet{i}{b}}$ collects all finite sums whose minimum summand is $h_{n_b}$.
	\smallskip
	
	\item $C_{\lranglet{i}{b}}:= \{ \nu(\tau_i\fr x) \mid x\in D_{\lranglet{i}{b}}\}$.
\end{itemize}

\medskip

\noindent In particular, they satisfy the following conditions:
\begin{enumerate}[label=(\alph*)]
	\item {\em (Well-labelled).} For every $b\in\w$, we have $\nu(\tau_i\fr h_{n_b})\neq \bot$; in particular, $C_{\lranglet{i}{b}}\subseteq \w$.
	
	[Why? This was implicit in our construction of $\{h_{n_b}\}_{b\in\w}$, but we elaborate. Since $\nu_i$ is injective, there exists at most one $x\in H[i]$ such that $\nu_i(x)=\lranglet{0}{0}$. Thus, deduce there exists at most one $x\in H[i]$ such that $\nu(\tau_i\fr x)=\bot$, which we can always avoid when picking $h_{n_b}$ since $H[i]$ is infinite.]
	
	\smallskip
	
	\item {\em ($\calH^*$-positivity).} Define
	$D[i]:=\bigcup_{b\in\w} D_{\lranglet{i}{b}}.$ Then
	$$D[i]\notin \calH\,.$$
	
	\noindent [Why? Every element of $\FS \left( \{h_{n_b} \mid b\in\w \}\right)$ has a unique minimum summand $h_{n_b}$ for some $b\in\w$. In particular, this means
	$$ \bigcup_{b\in\w} D_{\lranglet{i}{b}}= \FS \left( \{h_{n_b}\mid b\in\w \}\right).$$ 
	Since $\{h_{n_b}\}_{b\in\w}$ is strictly increasing, it defines an infinite subset of $\w$. Thus conclude that $D[i]=\FS \left( \{h_{n_b}\mid b\in\w \}\right) \notin\calH.$ ]
	
	\smallskip
	
	\item {\em ($\Sumn$-smallness).} At every stage $k=\lranglet{i}{b}$, 
	$$\sum_{c\in C_{\lranglet{i}{b}}}\frac{1}{c+1}< \frac{1}{2^k}\,.$$
	
	\noindent [Why? By construction, 
	$$\min \alpha(x) =  h_{n_b} \qquad\text{for all}\, x\in D_{\lranglet{i}{b}}\,.$$
Applying the canonical form of $\nu_i$, this implies $$C_{\lranglet{i}{b}}=\{\nu(\tau_i \fr h_{n_b})\}.$$
	Since $\nu(\tau_i\fr h_{n_b})>2^k$ by construction, the claim thus follows.]
\end{enumerate}
	
	\medskip 
	
\item \textbf{Case (3)}. Suppose $\nu_i$ has the canonical form:
	$$ \nu_i(x)=\nu_i(y)\iff \max \alpha(x)=\max \alpha(y)\,.$$
The argument is completely analogous to Case (2). Start by showing that 	$$
\max\alpha(h_m)\neq \max\alpha(h_n), \qquad (\text{for any}\, m\neq n)\,,
$$
and so $\nu_i$ is injective on $H[i]$. Then, recursively construct three sequences
	$$\{h_{n_b}\}_{b\in\w}\qquad \{D_{\lranglet{i}{b}}\}_{b\in\w}\qquad \{C_{\lranglet{i}{b}}\}_{b\in\w},$$
defined almost identically to Case (2) except that now
$$	D_{\lranglet{i}{b}}:=\bigg\{ \displaystyle\sum_{t\in F} h_{n_t}\Bmid F\in [\w]^{<\w}, F\neq\emptyset , \max F = b \bigg\}\,.$$
That is, $D_{\lranglet{i}{b}}$ collects all finite sums with maximum summand $h_{n_b}$. The sequences are well-labelled and $\Sumn$-small for the same reasons as in Case (2). For $\calH^*$-positivity, notice each finite sum has a unique greatest summand, and so
$ \bigcup_{b} D_{\lranglet{i}{b}}= \FS \left( \{h_{n_b}\mid b\in\w \}\right).$

\medskip
\item \textbf{Case (4)}. 	Suppose $\nu_i$ has the canonical form:
$$\nu_i(x)=\nu_i(y)\iff \min \alpha(x)=\min \alpha(y)\,\,\text{and}\,\,  \max \alpha(x)=\max \alpha(y)\,.$$
From this, we shall recursively extract a suitable subsequence from $H[i]$. Suppose we are at stage $k=\lranglet{i}{b}$ of the diagonalisation, and we have already chosen
$$h_{n_0}, \dots, h_{n_{b-1}}\in H[i]\,.$$
Since $\max \alpha(h_n)< \min \alpha (h_{n+1})$ for every $n\in\w$, deduce that 
$$\min \alpha(h_n+h_{n_r})\neq \min \alpha (h_n + h_{n_s})\quad\text{\and}\quad \min \alpha(h_n)\neq \min \alpha (h_n + h_{n_s})\,,$$
for every $n>n_{b-1}$ and $r<s\leq b-1$. Hence, appealing to its canonical form, deduce that the map $\nu_i$ is injective on
$$X_b:=\left\{\,h_n+h_{n_s} \mid n>n_{b-1},\, s\leq b-1\,\right\}\cup \left\{\,h_n\mid n>n_{b-1}\,\right\}\,.$$
Now fix 
\begin{equation} M:=(k+1)2^k.
\end{equation}
Since $\nu_i$ is injective on an infinite set $X_b$, at most finitely many elements $x\in X_b$ can satisfy $\nu_i(x)\leq \lranglet{1}{M}$. Equivalently, at most finitely many elements $x\in X_b$ have ``bad'' $\nu$-labels, i.e. either $\nu(\tau_i\fr x)=\bot$ or $\nu(\tau_i\fr x)\le M$. Now for each $n>n_{b-1}$, denote
$$P_n:=\{h_n\}\cup\{h_n+h_{n_s}\mid s\leq b-1\}.$$
Then $|P_{n}|=b+1$, and the sets $P_n$ are pairwise disjoint. Since only finitely many elements of $X_b$ have bad labels, pairwise disjointness means only finitely many packets $P_{n}$ meet this bad set. Hence, for some $n_b>n_{b-1}$, every element of $P_{n_b}$ has labels exceeding $M$, i.e.
\begin{equation}\label{eq:Hindman-Case4}
\nu(\tau_i\fr h_{n_b})> (k+1) 2^k
\quad\text{and}\quad
\nu(\tau_i\fr(h_{n_b}+h_{n_s}))> (k+1) 2^k
\end{equation}
for all $s<b$.
\smallskip

Continuing this process allows us to construct three sequences
$$\{h_{n_b}\}_{b\in\w}\qquad \{D_{\lranglet{i}{b}}\}_{b\in\w}\qquad \{C_{\lranglet{i}{b}}\}_{b\in\w}$$
whereby:
\begin{itemize}[label=$\diamond$]
	\item $\{h_{n_b}\}_{b\in\w}$ is an increasing subsequence in $H[i]$, as constructed above;
	\smallskip
	
	\item  $	D_{\lranglet{i}{b}}:=\bigg\{ \displaystyle\sum_{t\in F} h_{n_t}\Bmid F\in [\w]^{<\w}, F\neq\emptyset , \max F = b \bigg\}\,.$
	
That is, $D_{\lranglet{i}{b}}$ collects all finite sums whose maximum summand is $h_{n_b}$.
	\smallskip
	
	\item $C_{\lranglet{i}{b}}:= \{ \nu(\tau_i\fr x) \mid x\in D_{\lranglet{i}{b}}\}$.
\end{itemize}

\medskip
In particular, they satisfy the following conditions. 
\begin{enumerate}[label=(\alph*)]
	\item {\em (Well-labelled)}. For every $b\in\w$, we have $\nu(\tau_i\fr h_{n_b})\neq \bot$; in particular, $C_{\lranglet{i}{b}}\subseteq \w$.
	
		[Why? By construction of $\{h_{n_b}\}_{b\in\w}$, every element of the chosen packet $P_{n_b}$ has label exceeding $M=(k+1)2^k$; in particular, $\nu(\tau_i\fr h_{n_b})\in\w$ and cannot equal $\bot$.]
	\smallskip 
	\item {\em ($\calH^*$-positivity).} Define
	$D[i]:=\bigcup_{b\in\w} D_{\lranglet{i}{b}}.$ Then
	$$D[i]\notin \calH\,.$$
	[Why? Same as Case (3).]
	
	\smallskip 
	\item {\em ($\Sumn$-smallness).}  At every stage $k=\lranglet{i}{b}$, 
	$$\sum_{c\in C_{\lranglet{i}{b}}}\frac{1}{c+1}< \frac{1}{2^k}\,.$$
	
	[Why? Every element of $\FS(\{h_{n_b} \mid b\in\w\})$ has a unique minimum and maximum summand. Hence, represent $D_{\lranglet{i}{b}}$ as
	$$\qquad\qquad\displaystyle D_{\lranglet{i}{b}}=\{h_{n_b}\}\cup \displaystyle\bigcup_{s<b}\bigg\{ h_{n_s} + \sum_{r\in F} h_{n_r} + h_{n_b} \Bmid \, F\subseteq \{s+1, \dots, b-1\} \bigg\},$$ 
with the understanding that we set $F=\emptyset$ whenever $s+1=b$ . In particular, for fixed $s<b$, define 
	$$A_{s,b}:=\bigg\{ h_{n_s} + \sum_{r\in F} h_{n_r} + h_{n_b} \Bmid  F\subseteq \{s+1, \dots, b-1\}\bigg\}.$$
	 By construction, 
	$$\min \alpha (x) = \min \alpha (y) \,\text{and}\, \max \alpha(x)=\max \alpha(y)\qquad \text{for all}\, x,y\in A_{s,b}\,.$$
Hence, applying the canonical form of $\nu_i$, this implies 
$$\left\{\nu(\tau_i\fr x)\Bmid x\in A_{s,b}\right\} = \left\{\nu\left(\tau_i\fr (h_{n_s}+ h_{n_b})\right)\right\}\,\qquad\text{for every}\, s<b\,.$$
By construction, the sequence $\{h_{n_b}\}_{b\in\w}$ satisfies Equation~\eqref{eq:Hindman-Case4}. Hence, compute
\begin{align*}
\sum_{c\in C_{\lranglet{i}{b}}} \frac{1}{c+1} & = \frac{1}{\nu(\tau_i\fr h_{n_b}) +1} \;\; +\;\; \sum_{s<b}\frac{1}{\nu\left(\tau_i\fr (h_{n_s}+ h_{n_b})\right)+1} \\
& < \frac{1}{(k+1)2^{k}+1}\;\; +\;\; \sum_{s<b} \frac{1}{(k+1)2^{k}+1} \\
& < \frac{b+1}{(k+1)2^{k}} \leq \frac{1}{2^k}\,,
\end{align*}
where the final inequality uses the fact that $b\leq k=\lranglet{i}{b}$ for all $b\in\w$ (Convention~\ref{conv:enum}).]
\end{enumerate}

\smallskip 
\item \textbf{Case (5)}. Suppose $\nu_i$ has the canonical form:
$$\nu_i(x)=\nu_i(y)\iff x=y\,.$$
That is, $\nu_i$ is injective on $\FS(H[i])$. As before, we will use this to extract a suitable subsequence from $H[i]$. Suppose we are at stage $k=\lranglet{i}{b}$ of the diagonalisation, and we have already chosen
$$h_{n_0}, \dots, h_{n_{b-1}}\in H[i]\,.$$
Now fix
\begin{equation}
M:=2^{2k}.
\end{equation}
Since $\nu_i$ is injective on the infinite set $\FS(H[i])$, at most finitely many elements $x\in \FS(H[i])$ can satisfy $\nu_i(x)\leq \lranglet{1}{M}$. Hence, by similar reasoning as in Case (4), the set
$$X_b:=\big\{ x\in \FS(H[i])\bmid \nu(\tau_i\fr x)=\bot \,\,\text{or}\,\, \nu(\tau_i\fr x)\leq M \big\}\cup \{h_{n_0}\,, \dots,\, h_{n_{b-1}}\}$$
is finite. Hence, we may choose $n_b>n_{b-1}$ such that
\begin{equation}\label{eq:Hindman-Case5}
h_{n_{b}}>\max X_{b}\qquad\text{and}\qquad  \nu(\tau_i\fr h_{n_b})>M>2^{2k}\,.
\end{equation}

Continuing this process allows us to construct three sequences
$$\{h_{n_b}\}_{b\in\w}\qquad \{D_{\lranglet{i}{b}}\}_{b\in\w}\qquad \{C_{\lranglet{i}{b}}\}_{b\in\w}$$
whereby:
\begin{itemize}[label=$\diamond$]
	\item $\{h_{n_b}\}_{b\in\w}$ is an increasing subsequence in $H[i]$, as constructed above;
	\smallskip
	
	\item  $	D_{\lranglet{i}{b}}:=\bigg\{ \displaystyle\sum_{t\in F} h_{n_t}\Bmid F\in [\w]^{<\w}, F\neq\emptyset , \max F = b \bigg\}\,.$
	
	That is, $D_{\lranglet{i}{b}}$ collects all finite sums whose maximum summand is $h_{n_b}$.
	\smallskip
	
	\item $C_{\lranglet{i}{b}}:= \{ \nu(\tau_i\fr x) \mid x\in D_{\lranglet{i}{b}}\}$.
	
	\smallskip
\end{itemize}
That these sequences are well-labelled and satisfy $\calH^*$-positivity is completely analogous to the previous cases. To show $\Sumn$-smallness, we first record two observations:
\begin{itemize}[label=$\diamond$]
	\item For every $x\in \FS(\{h_{n_t} \mid t<b\})$, Equation~\eqref{eq:Hindman-Case5} yields
	$$h_{n_b} + x > h_{n_b}>\max X_b\,,$$
	and so
	$$	\nu(\tau_i\fr (h_{n_b}+x)) > M > 2^{2k}\,.$$
	In particular, every $c\in C_{\lranglet{i}{b}}$ satisfies the lower bound
		\begin{equation}\label{eq:Hindman-Case5a}
	c >M> 2^{2k}\,.
	\end{equation}
	\item $D_{\lranglet{i}{b}}$ is a finite set, and has size 
\begin{equation}\label{eq:Hindman-Case5b}
|D_{\lranglet{i}{b}}|=2^b 
\end{equation}
\end{itemize}
Hence, let us fix $k=\lranglet{i}{b}$, Combining Equations~\eqref{eq:Hindman-Case5a}-\eqref{eq:Hindman-Case5b}, compute:
\begin{align*}
\sum_{c\in C_{\lranglet{i}{b}}} \frac{1}{c+1} \leq (2^b)\cdot \frac{1}{2^{2k}+1} < \frac{2^b}{2^{2k}} \leq \frac{1}{2^k}\,,
\end{align*}
where, as in previous cases, the final inequality uses the fact that $b\leq k=\lranglet{i}{b}$ for all $b\in\w$.
\end{itemize}
{\bf \em This completes the case analysis, and the setup of our diagonalisation scheme.}

\medskip

\noindent \underline{Verification.} There are two things to check.
\begin{itemize}
	\item Collect all the successor labels into the set
	$$C:=\bigcup_{k\in\w} C_k\,.$$
	By $\Sumn$-smallness of the sequence $\{C_k\}_{k\in\w}$, compute that
	$$\sum_{c\in C} \frac{1}{c+1} \leq  \sum_{k\in\w}\, \sum_{c\in C_k} \frac{1}{c+1} <  \sum_{k\in\w} \frac{1}{2^k}<\infty\,, $$
	and so $C\in \Sumn$, as desired.
	\item Under each critical node $\tau_i$, we defined an $\calH^*$-positive set of successors
	$$D[i]\notin\calH\, , $$
	and every $x\in D[i]$ satisfies  $\nu(\tau_i\fr x)\in C$ by construction.
\end{itemize}

\medskip
\noindent \underline{Contradiction.} Since $C\in\Sumn$, its complement $A:=\w\setminus C$ belongs to the dual filter $(\Sumn)^*$. As $\Phi$ witnesses $\Sumn\glt\calH$, there exists an $\calH^*$-branching tree $T_A$ such that $\Phi[T_A]\subseteq A$. By Lemma~\ref{lem:separation-2}, the tree $T_A$ has a critical node $\sigma=\tau_i$. By definition, $\{n\mid \tau_i\fr n \in T_A\}\in \calH^*$. Since $D[i]$ is $\calH^*$-positive, there exists an $x\in D[i]$ such that $\tau_i\fr x\in T_A$. Now set $c:=\nu(\tau_i\fr x)$. By construction, $c\in C$. On the other hand, Lemma~\ref{lem:separation-1} yields some branch $p\in[T_A]$ with $\Phi(p)=c$. Thus $c=\Phi(p)\in \Phi[T_A]\subseteq A$, contradicting $c\in C=\omega\setminus A$. Hence, we have successfully shown $\Sumn\not\glt\calH$.
\end{proof}

\subsection{$\Sumn$ and $\Ram$ are incomparable}\label{sec:Ram} Similarly, we show that the canonical summable ideal and Ramsey ideal are incomparable in $\glt$, generalising a previous theorem by Filip\'{o}w and Kowitz \cite[Theorem 4.1]{FKK24}, who established the result in the setting of the classical Kat\v{e}tov order. 
\medskip

\begin{theorem}\label{thm:Sumn-Ram} $\Sumn$ and $\Ram$ are incomparable in the Gamified Kat\v{e}tov order. That is,
	$$\Sumn\not \glt \Ram\qquad\text{and}\qquad \Ram\not\glt \Sumn\,\,.$$
\end{theorem}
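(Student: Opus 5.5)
The direction $\Ram\not\glt\Sumn$ is already Corollary~\ref{cor:first-neg-H-R}(ii), so it remains to show $\Sumn\not\glt\Ram$. We follow the blueprint of Section~\ref{sec:blueprint}, modelled on the proof of Theorem~\ref{thm:Sumn-Hind}, with the Canonical Ramsey Theorem in place of the Canonical Hindman Theorem. Since $\Ram$ lives on $[\w]^2$, we identify $[\w]^2$ with $\w$ via $\code{-}$ (Convention~\ref{conv:enum}); successors of a node are then unordered pairs.

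Suppose $\Phi$ witnesses $\Sumn\glt\Ram$, and let $\nu:=\nu_\Phi^{\Ram}$. The case $\nu(\o)\in\w$ is ruled out exactly as in Claim~\ref{claim:Pw-second-direction}. So assume $\nu(\o)=\bot$, and enumerate the critical nodes $\{\tau_i\}_{i\in\w}$ using Lemma~\ref{lem:separation-2}.

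For each $i$, define $\nu_i\colon[\w]^2\to\w$ as in Equation~\eqref{eq:partition-map}, sending $\bot$ to $\lranglet{0}{0}$ and a label $c$ to $\lranglet{1}{c}$. Apply Theorem~\ref{thm:Ramsey} to get an infinite $T[i]$ on whose pairs $\nu_i$ is canonical.
\begin{itemize}
\item Case (1), where $\nu_i$ is constant, is impossible. The pairs $[T[i]]^2$ would all carry a single label value, and that fibre lies in $\Ram$ because $\tau_i$ is critical, whereas $[T[i]]^2\notin\Ram$.
\item In Cases (2)--(4), $\nu_i$ takes at most one value per vertex (Cases (2),(3)) or is injective (Case (4)). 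Only finitely many pairs can therefore take the value $\lranglet{0}{0}$ or have small labels, after discarding finitely many vertices in Cases (2),(3).
\end{itemize}

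At stage $k=\lranglet{i}{b}$ we extend a sparse increasing sequence $t_0<t_1<\cdots$ in $T[i]$ by a new vertex $t_b$. Set
$$D_{\lranglet{i}{b}}:=\{\{t_s,t_b\}\mid s<b\},$$
the set of new edges, and let $C_{\lranglet{i}{b}}$ be its label set. Then $D[i]=\bigcup_b D_{\lranglet{i}{b}}=[\{t_b\mid b\in\w\}]^2$, so $D[i]\notin\Ram$, which gives $\Ram^*$-positivity. We now need $\sum_{c\in C_k}\frac1{c+1}<2^{-k}$ in each case.
\begin{itemize}
\item Case (3), labels depend only on $\max$: $C_k$ is a singleton $\{\nu(\tau_i\fr\code{\{t_0,t_b\}})\}$. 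Injectivity in the max coordinate lets us choose $t_b$ with label $>2^k$.
\item Case (4), injective: $|C_k|\le b\le k$. Since only finitely many pairs have bad labels, choose $t_b$ so large that all pairs $\{t_s,t_b\}$ with $s<b$ have labels $>(k+1)2^k$. This works because the pairs $\{\{t_s,t\}\mid s<b\}$ are disjoint for distinct $t$, as in the packet argument of Case (4) of Theorem~\ref{thm:Sumn-Hind}.
\item Case (2), labels depend only on $\min$: here the edges $\{t_s,t_b\}$ carry the old labels $\ell(t_s)$, so a naive $D_k$ would reuse labels. Instead, assign to stage $b$ the edges whose minimum is $t_b$, i.e.\ $D_{\lranglet{i}{b}}:=\{\{t_b,t\}\mid t\in T[i],\,t>t_b\}$. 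This set is infinite but has the single label $\ell(t_b)$, which we choose $>2^k$. We then restrict the final $D[i]$ to $[\{t_b\}]^2$, whose elements all have minimum some $t_b$ and hence label in $C$.
\end{itemize}

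Finally, let $C:=\bigcup_k C_k$. It lies in $\Sumn$ since $\sum_k 2^{-k}<\infty$. With $A:=\w\setminus C$, the witness tree $T_A$ has a critical node $\tau_i$ by Lemma~\ref{lem:separation-2}. Its $\Ram^*$-large successor set meets $D[i]$ by Observation~\ref{obs:positive-null}, and Lemma~\ref{lem:separation-1} then gives a path $p\in[T_A]$ with $\Phi(p)\in C$, a contradiction. We expect the main obstacle to be bookkeeping: in Case (2) we must arrange that edges are partitioned by their minimum, and in Case (4) we must bound $|C_k|$ by $b\le k$ using property $(\dagger)$ of Convention~\ref{conv:enum}. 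Both are handled as above.
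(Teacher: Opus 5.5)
Your proposal is correct and follows the paper's proof essentially step for step. It uses the same canonical-Ramsey case split, groups edges by their minimum in Case (2) (the paper's $\{\{t_r,t_b\}\mid r>b\}$) and by their maximum in Cases (3) and (4), and reaches the same final contradiction via Lemmas~\ref{lem:separation-1} and~\ref{lem:separation-2}; the only differences are cosmetic, e.g.\ your threshold $(k+1)2^k$ in Case (4) versus the paper's $k\cdot 2^k$.
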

\begin{proof} Some conventions: we view $\Ram$ as an ideal on $\w$ via the fixed bijection $\code{-}\colon [\w]^2\simeq \w$ in Convention~\ref{conv:enum}. Unordered pairs will typically be denoted as $x\in [\w]^2$; whenever relevant, we may represent $x$ explicitly as  $x=\{s,t\}$, or by its enumeration $\code{x}\in\w$.
\medskip

By Corollary~\ref{cor:first-neg-H-R}, we already know that $\Ram\not\glt\Sumn$. It remains to show $\Sumn\not\glt \Ram$. The proof is completely analogous to Theorem~\ref{thm:Sumn-Hind}, modulo some minor adjustments. Suppose for contradiction there exists a partial continuous function $\Phi$ witnessing $\Sumn\glt \Ram$. Denote the labelling function as $\nu:=\nu^{\Ram}_\Phi$, and construct the canonical tree $T_\Phi$. As before, assume $\nu(\o)=\bot$ and enumerate all the critical nodes $\{\tau_i\}_{i\in\w}$ in the canonical tree $T_\Phi$. This sets up the following diagonalisation scheme.
\smallskip 

\noindent\underline{Goal}. Our main objective is to construct a forbidden label set 
$$C\in \Sumn,$$	
such that for every critical node $\tau_i$, we obtain a $\Ram^*$-positive set
$$D[i]\notin \Ram\,,$$	
constructed so that $\nu(\tau_i\fr \code{x})\in C$ for all $x\in D[i]$.
\smallskip

\noindent \underline{Partition}. For each critical node $\tau_i$, the labelling function $\nu$ induces a map\footnote{To avoid confusion: $\lranglet{-}{-}\colon\w\times \w \to\w$ denotes the fixed bijection on \textbf{ordered} pairs of $\w$, whereas $\code{-}\colon[\w]^2\to\w$ denotes the fixed bijection on distinct \textbf{unordered} pairs -- see Convention~\ref{conv:enum}.}
\begin{align}\label{eq:partition-map-Ram}
\nu_i\colon [\w]^2&\longrightarrow \w\\
x
& \longmapsto \begin{cases}
\lranglet{0}{0}\qquad\qquad\qquad\,\,\,\,\text{if}\,\nu(\tau_i\fr \code{x})=\bot\\
\lranglet{1}{\nu(\tau_i\fr \code{x})} \qquad\quad \text{if}\,\nu(\tau_i\fr \code{x})\in \w
\end{cases} \,\,.\nonumber
\end{align}
 Applying the Canonical Ramsey Theorem (Theorem ~\ref{thm:Ramsey}), fix an infinite set
\begin{equation}\label{eq:R-Ramsey}
T[i]\subseteq \w
\end{equation}
such that the value of $\nu_i$ on $\big[T[i]\big]^2$ has one of the four canonical forms. 

\smallskip 

\noindent\underline{Diagonalisation}. At stage $k=\lranglet{i}{b}$, we handle the critical node $\tau_i$. Our present task is to extract an increasing subsequence from the predetermined set 
$T[i]$ whose associated labels have relatively small harmonic weight. The choice of subsequence will depend on which canonical form of $\nu_i$ holds on $\Tit$:
\begin{itemize}
	\item \textbf{Case (1).} Suppose 
	$$\forall x,y\in\Tit. \,\, \nu_i(x)=\nu_i(y)\,.$$
	Then, there exists a constant $c\in\w\cup\{\bot\}$ such that 
	$$\Tit\subseteq \{x \mid \nu(\tau_i\fr \code{x})=c\}\,.$$
	Since $\tau_i$ is critical, this means $\{x\mid \nu(\tau_i\fr \cx)=c\}\in\Ram$. But $\Tit\notin\Ram$, contradiction. Hence Case (1) cannot occur.
	
	\smallskip
	
	\item \textbf{Case (2).} Suppose 
		$$\forall x,y\in\Tit. \,\, \nu_i(x)=\nu_i(y)\iff \min x = \min y\,.$$
	In which case, for any $t\in T[i]$, the restriction of $\nu_i$ to the set
	$$M_t:=\left\{\{s,t\}\mid s\in T[i] \,, s> t\right\}$$
	is constant. Hence, for any $t$, there exists a label $c_t\in\w$ such that
	$$\{c_t\}=\{\nu( \tau_i\fr \code{x}) \mid x\in M_t \}\,.$$
Moreover, whenever $t\neq t'$ in $T[i]$, we have
	$$\nu_i(x)\neq \nu_i(y)\qquad \text{for any}\, (x,y)\in M_t\times M_{t'}\,.$$
This means the labels $c_t$ are pairwise distinct, and so we can find a strictly increasing sequence $$\{t_{b}\}_{b\in\w}\subseteq T[i]$$ 
such that $c_{t_b}>2^{k}$ for every $k=\lranglet{i}{b}$.
This procedure allows us to construct three sequences 
$$\{t_b\}_{b\in\w}\, \qquad \,\{D_{\lranglet{i}{b}}\}_{b\in\w}\,\qquad \{C_{\lranglet{i}{b}}\}_{b\in\w}\,$$
whereby:
\begin{itemize}[label=$\diamond$]
	\item $\{t_b\}_{b\in\w}$ is a sequence of $T[i]$, constructed above;
	\item $D_{\lranglet{i}{b}}:=\left\{\{t_r,t_b\}\mid r> b\,\right\}$;
	\item $C_{\lranglet{i}{b}}:=\{\nu(\tau_i\fr \code{x})\mid x\in D_{\lranglet{i}{b}}\}$.
\end{itemize}
These satisfy the following conditions:
\begin{enumerate}[label=(\alph*)]
	\item {\em (Well-labelled).} For every $b\in\w$, we have $\bot\notin C_{\lranglet{i}{b}}$; in particular, $C_{\lranglet{i}{b}}\subseteq \w$.
	
	[Why? Since the assignment 
	\begin{align*}
	T[i]&\longrightarrow  \w\cup\{\bot\}\\
	t&\longmapsto c_t\,
	\end{align*}
	is injective, there exists at most one $t_\bot\in T[i]$ such that 
	$t_\bot\mapsto \bot$. Since $T[i]\setminus t_\bot$ is still an infinite set, we may assume without loss of generality that $c_t\neq\bot$ for all $t\in T[i]$.]

	\smallskip
	
	\item {\em ($\Ram^*$-positivity).} 	Define
	$D[i]:=\bigcup_{b\in\w} D_{\lranglet{i}{b}}.$ Then
	$$D[i]\notin \Ram\,.$$

	[Why? Define $H:=\{t_{b}\mid b\in\w\}$. Since any $x\in [H]^2$ may be represented as a pair $\{t_{r},t_{b}\}$ where $r> b$,
	$$D[i]=\displaystyle\bigcup_{b\in\w} D_{\lranglet{i}{b}}=[H]^2\,.$$
	Moreover, $H\subseteq\w$ is clearly infinite since $\{t_{b}\}_{b\in\w}$ is a strictly increasing sequence. Thus conclude $D[i]\notin \Ram$.]
	
	\smallskip
	
	\item {\em ($\Sumn$-smallness).} At every stage $k=\lranglet{i}{b}$, 
	$$\sum_{c\in C_{\lranglet{i}{b}}} \frac{1}{c+1}<\frac{1}{2^k}\,.$$
	
	[Why? By our above analysis, 
	$$c_{t_b}=\nu(\tau_i\fr\code{x})=\nu(\tau_i\fr\code{y})\qquad\text{for all} \,\,x,y\in D_{\lranglet{i}{b}}\,.$$
Since $c_{t_b}>2^{k}$ by construction, the claim thus follows.]
\end{enumerate}

\medskip 

\item \textbf{Case (3).} Suppose 
$$\forall x,y\in\Tit. \,\, \nu_i(x)=\nu_i(y)\iff \max x = \max y\,.$$
This case proceeds exactly as in Case (2) after making the obvious adjustments (swapping ``$r>b$'' with ``$r< b$'').
\medskip

\item \textbf{Case (4).} Suppose 
$$\forall x,y\in\Tit. \,\, \nu_i(x)=\nu_i(y)\iff   x =  y\,.$$
That is, $\nu_i$ is injective on $\Tit$. How might we leverage this to extract a suitable subsequence from $T[i]$? Suppose we are at stage $k=\lranglet{i}{b}$ of the diagonalisation, and we have already chosen
$$t_{0}, \dots, t_{{b-1}}\in T[i]\,.$$
Now fix
\begin{equation}
M:=k\cdot 2^k\,.
\end{equation}
Since $\nu_i$ is injective on the infinite set $\Tit$, at most finitely many elements $x\in \Tit$ can satisfy $\nu_i(x)\leq \lranglet{1}{M}$. Hence, the set
$$X_b:=\bigg\{ x\in \Tit \Bmid \nu(\tau_i\fr \cx)=\bot \,\,\text{or}\,\, \nu(\tau_i\fr \cx)\leq M \bigg\}$$
is finite. In particular, only finitely many elements of $T[i]$ occur in $X_b$, so we may choose
$$t_b\in T[i]\setminus \{t_{0}\,, \dots,\, t_{{b-1}}\}$$ such that
\begin{equation}\label{eq:Ramsey-Case4}
\nu(\tau_i\fr\code{\{t,t_b\}})>  k\cdot 2^k\,\qquad\text{for all}\,t\in T[i]\,.
\end{equation}
This procedure allows us to construct three sequences 
$$\{t_b\}_{b\in\w}\,\qquad \{C_{\lranglet{i}{b}}\}_{b\in\w}\, \qquad \,\{D_{\lranglet{i}{b}}\}_{b\in\w}\,$$
whereby:
\begin{itemize}[label=$\diamond$]
	\item $\{t_b\}_{b\in\w}$ is a sequence of elements in $T[i]$, constructed above;
	\item $D_{\lranglet{i}{b}}:=\left\{\{t_r,t_b\}\mid r< b\,\right\}$;
	\item $C_{\lranglet{i}{b}}:=\{\nu(\tau_i\fr \code{x})\mid x\in D_{\lranglet{i}{b}}\}$.
\end{itemize}
These satisfy the following conditions:
\begin{enumerate}[label=(\alph*)]
	\item {\em (Well-labelled).} For every $b\in\w$, we have $\bot\notin C_{\lranglet{i}{b}}$; in particular, $C_{\lranglet{i}{b}}\subseteq \w$.
	
	[Why? By construction, we define the sequence $\{t_b\}_{b\in\w}$ by choosing $t_b\in T[i]$ such that $(t,t_b)\notin X_b$ for all $b\in\w$. In particular, this means the sequence excludes any $x\in \Tit$ such that $\nu(\tau_i\fr[x])=\bot$.]

	\smallskip
	
	\item {\em ($\Ram^*$-positivity).} 	Define
	$D[i]:=\bigcup_{b\in\w} D_{\lranglet{i}{b}}.$ Then
	$$D[i]\notin \Ram\,.$$
	
	[Why? Define $H:=\{t_{b}\mid b\in\w\}$. Since any $x\in [H]^2$ may be represented as a pair $\{t_{r},t_{b}\}$ where $r<b$,
	$$D[i]=\displaystyle\bigcup_{b\in\w} D_{\lranglet{i}{b}}=[H]^2\,.$$
	Moreover, $H\subseteq\w$ is clearly infinite since we chose $t_b\in T[i]\setminus \{t_{0}\,, \dots,\, t_{{b-1}}\}$ at every stage $k=\lranglet{i}{b}$. Thus conclude $D[i]\notin \Ram$.]
	
	\smallskip
	
	\item {\em ($\Sumn$-smallness).} At every stage $k=\lranglet{i}{b}$, 
	$$\sum_{c\in C_{\lranglet{i}{b}}} \frac{1}{c+1}<\frac{1}{2^k}\,.$$
	
	[Why? Applying Equation~\eqref{eq:Ramsey-Case4}, compute 
	$$\sum_{c\in C_{\lranglet{i}{b}}} \frac{1}{c+1}=\sum_{r<b}\frac{1}{\nu(\tau_i\fr \code{\{t_r,t_b\}})+1}\leq\sum_{r<b}\frac{1}{k2^k +1} < \frac{b}{k {2^k}} \leq \frac{1}{2^k},$$
where again the final inequality uses Convention~\ref{conv:enum} to get $b\leq k=\lranglet{i}{b}$.]
\end{enumerate}
\end{itemize}
{\bf \em This completes the case analysis, and the setup of our diagonalisation scheme.}

\medskip

\noindent \underline{Verification \& Contradiction}. The rest of the proof proceeds exactly as in Theorem~\ref{thm:Sumn-Hind}. Start by defining
$$C:=\bigcup_{k\in\w} C_k\,.$$
One easily verifies that $C\in\Sumn$, and so $A=\omega\setminus C\in(\Sumn)^*$. By hypothesis, $\Phi$ witnesses $\Sumn\glt\Ram$, so there exists a $\Ram^*$-branching tree $T_A$ such that $\Phi[T_A]\subseteq A$. However, $T_A$ will always have some critical node $\tau_i$ with a successor from the $\Ram^*$-positive set $D[i]$, yielding a forbidden label in $C$, contradicting $\Phi[T_A]\subseteq A$. Thus, we conclude $\Sumn\not\glt\Ram$.
\end{proof}

\begin{conclusion} Non-linearity within the Gamified Kat\v{e}tov order is not restricted to the summable ideals; in fact, there exists several instances of non-linearity involving ``Ramsey-like'' ideals.
\end{conclusion}

\begin{discussion} By work of Kwela, it is also known that $\calH$ and $\Ram$ are incomparable within the classical Kat\v{e}tov order \cite[Theorems 5.2 and 6.1]{FKK24}. It is reasonable to conjecture that the same holds in the gamified setting, but we have not verified this.
\end{discussion}

 
 \section{Consequences for Computability Theory}\label{sec:topos}
 
 \subsection{Computability-Theoretic Background}
We now return to the setting of the Effective Topos $\Eff$, which originally motivated our study of the Gamified Kat\v{e}tov order.
To explain the place of our work within computability theory, we briefly review the major shifts in our understanding of LT topologies on $\Eff$ over the past several decades:
\begin{enumerate}
\item \textbf{First stage (1980s--2000s).} In his original paper \cite{HylandEffective}, Hyland showed that the Turing degrees embed effectively into the $\clt$-order on LT topologies in $\Eff$. This suggested viewing the LT topologies as a generalisation of Turing oracles (i.e. single-valued oracles), but the nature of this generalisation remained unclear for many years. A few examples of non-Turing oracles were subsequently discovered (most notably, the double negation topology, Pitts' counterexample \cite{PittsPhD}, and Lifshitz realisability \cite{vO91}), but these were generally regarded as exceptional cases rather than manifestations of a broader theory. 


\item \textbf{Second stage (2010s).}
Lee-van Oosten \cite{LvO13} provided concrete presentations of all LT topologies on ${\rm Eff}$.
They focused in particular on the {\em basic} LT topologies and found several new examples that do not arise from single-valued oracles. However, the combinatorial mechanism separating the basic topologies (or indeed LT topologies in general) was not yet identified.

\item \textbf{Third stage (2020s).}
Kihara \cite{Kih23}, building on Lee-van Oosten's work, linked the $\clt$-order to {\em Weihrauch reducibility}, a notion extensively studied in computable analysis \cite{BGP21}. This marked a shift in perspective on what should be regarded as the ``paradigmatic'' LT topology: in computable analysis, multi-valued oracles (Weihrauch oracles) are the primary objects of study, with single-valued oracles (Turing oracles) appearing as a special case.

\item \textbf{Fourth stage (2020s).} In fact, Kihara \cite{Kih23} linked the LT-topologies to the {\em extended} Weihrauch degrees introduced by Bauer \cite{Bau22}. The embedded copy of the Weihrauch degrees within this new hierarchy is called the {\em modest} degrees, with the remaining {\em non-modest degrees} being viewed as the exceptional case. Nevertheless, Kihara pointed out that each basic topology (in the sense of Lee-van Oosten) yields a non-modest degree; several natural examples were analysed in the cited paper, with the task of constructing further examples left as a future challenge \cite[Question 1]{Kih23}.
\item \textbf{Fifth stage (Present work).} Kihara and Ng \cite{KiNg26} linked the basic topologies to the combinatorial complexity of filters (dually, ideals) on $\w$ through the Gamified Kat\v{e}tov order. In particular, our results highlight how these topologies encode rich interactions between computable and combinatorial complexity. This discovery repositions the basic topologies, and thus the non-modest degrees, from isolated curiosities to central objects of study.\footnote{{
		\em Side-note.} In a recent paper \cite{ASM}, Abou Samra and Madore argue that the $\clt$-order arises naturally in computability theory by considering two forms of multivaluedness (non-determinism). The Weihrauch (modest) part captures only what they call {\em demonic} non-determinism, while the non-modest part incorporates {\em angelic} non-determinism.
	In their language, our work on the Computable Gamified Kat\v{e}tov Order thus provides a computability-theoretic analysis of pure angelic non-determinism.
}
\end{enumerate}
This fifth stage opens up a new research area previously overlooked in computability theory. Informally, filters and ideals correspond to abstract notions of majority and minority. Consequently, the $\clt$-order on basic topologies on $\Eff$ -- equivalently, the computable Gamified Kat\v{e}tov order -- measures the relative strength of what we call {\em computability by majority} notions (for details, see \cite[\S 7.2]{KiNg26}). This section extends the set-theoretic results established in the previous sections to illuminate several important structural aspects of this relatively uncharted territory.


\subsection{Non-linearity in the Computable Setting} By Theorem~\ref{thm:main-thm}, the $\clt$-order on filters (dually, ideals) is equivalent to the $\glt$-order where the witness map is now required to be computable. Leveraging this characterisation, many of the non-linearity results from before transfer immediately to the $\clt$-order.

\begin{theorem}\label{thm:LT-Pw} The extended Weihrauch lattice of non-modest degrees embeds a copy of $\Pw/\Fin$.
\end{theorem}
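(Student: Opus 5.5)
The plan is to transfer Theorem~\ref{thm:Pw-emb} to the computable setting and then invoke Theorem~\ref{thm:main-thm}(iii) together with Kihara's identification of basic LT topologies with non-modest extended Weihrauch degrees \cite{Kih23}. Concretely, for each $P\subseteq\w$ we take the ideal $\Sump$ from Construction~\ref{cons:Pw-fin}; to avoid the degenerate value $\mathrm{Sum}_\w=\Pw$ we may, as in Theorem~\ref{thm:Pw-emb-Denz}, send classes of cofinite sets to $\Denz$ instead. We send $\Sump$ to the basic topology (equivalently, the extended Weihrauch degree) associated with the dual filter $(\Sump)^*$. We must check two directions of the equivalence $P\subseteq^*Q\iff \Sump\clt\Sumq$.

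For the forward direction, Claim~\ref{claim:Pw-first-direction} already gives an explicit witness: the partial continuous function induced by the identity $\id\colon\w\to\w$, which is computable. Hence $\Sump\clt\Sumq$ by the computable variant in Theorem~\ref{thm:main-thm}(iii). For the cases involving $\Denz$, I would check that the Kat\v{e}tov reduction $\Sumf\lk\Denz$ of \cite[Proposition 3.5]{HHH07} is witnessed by a computable map when $f=w_S$. This requires some care, since $w_S$ depends on $S$, which may be noncomputable. If this fails, I would fall back to the range $S$ co-infinite and treat the top element separately, e.g.\ by using $\Pw$ itself, whose dual filter $\{\w\}$ still yields a legitimate degree.

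The backward direction is immediate. Since $\clt$ implies $\glt$ (computable witnesses are in particular continuous), Claim~\ref{claim:Pw-second-direction} gives $\Sump\not\glt\Sumq$ whenever $P\not\subseteq^*Q$, hence $\Sump\not\clt\Sumq$. So the map is an order embedding of $\Pw/\Fin$ into the $\clt$-order on basic topologies.

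The main obstacle is the final identification. We need every image degree to be genuinely non-modest, and the embedding must land in the extended Weihrauch lattice with order preserved. For this I would cite \cite{Kih23}, which shows that basic topologies given by filters correspond to extended Weihrauch degrees and that these are non-modest whenever the filter is nonprincipal (or at least not Weihrauch-equivalent to a modest degree). I would verify nonprincipality of $(\Sump)^*$: it contains all cofinite sets, since $\Fin\subseteq\Sump$, and it is proper by Observation~\ref{obs:co-infinite}. The remaining subtlety is the degenerate $\Pw$ or $\Denz$ endpoint, which is handled as above.
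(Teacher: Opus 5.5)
Your core argument is exactly the paper's proof. The identity witness of Claim~\ref{claim:Pw-first-direction} is computable, and Claim~\ref{claim:Pw-second-direction} transfers because $\glt$ is coarser than $\clt$. The paper simply keeps $\mathrm{Sum}_\w=\Pw$ as the image of the top class, so the $\Denz$ detour is unnecessary; it is also precisely what the paper declines to claim in the computable setting (see Corollary~\ref{cor:no-Denz} and Problem~\ref{prob:Pw-LT}).

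One correction to your fallback. Under Convention~\ref{con:filters}, the dual of the ideal $\Pw$ is $\Pw$ itself: the improper upper set containing $\emptyset$, which is the top of the order because the one-node tree is then branching and has no paths. It is not $\{\w\}$, which is the dual of $\{\emptyset\}$ and sits strictly below every $(\Sump)^*$, so using it would break monotonicity at the top.
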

\begin{proof} The positive direction of Theorem~\ref{thm:Pw-emb} remains valid, since $\id\colon\w\to\w$ witnesses $\Sump\glt\Sumq$ for $P\subseteq^* Q$ and $\id$ is computable. The negative direction also transfers, since $\glt$ is coarser than $\clt$. 
And so we conclude
$$P\subseteq^*Q\iff \Sump\clt \Sumq\,.$$	
\end{proof}

\begin{theorem} The following pairs are incomparable in the $\clt$-order:
	\begin{enumerate}[label=(\roman*)]
		\item $\calH$ and $\Sumn$.
		\item $\Ram$ and $\Sumn$.
	\end{enumerate}
\end{theorem}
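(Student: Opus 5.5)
The plan is to transfer the four non-reductions already established in the gamified setting, using the fact that the $\clt$-order is the computable restriction of $\glt$. By Theorem~\ref{thm:main-thm}(iii), $\calU\clt\calV$ holds exactly when there is a \emph{computable} partial continuous $\Phi$ witnessing $\calU\glt\calV$ (on the dual filters). In particular $\calU\clt\calV$ implies $\calU\glt\calV$. Contrapositively, every failure $\calU\not\glt\calV$ gives $\calU\not\clt\calV$, as already noted in the Discussion on coarseness and used in Theorem~\ref{thm:LT-Pw}.

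For item (i), I would invoke Theorem~\ref{thm:Sumn-Hind}, which gives $\Sumn\not\glt\calH$ and $\calH\not\glt\Sumn$. Applying the contrapositive above to each yields $\Sumn\not\clt\calH$ and $\calH\not\clt\Sumn$, which is exactly incomparability in the $\clt$-order.

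For item (ii), I would argue identically from Theorem~\ref{thm:Sumn-Ram}, obtaining $\Sumn\not\clt\Ram$ and $\Ram\not\clt\Sumn$.

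The only step needing care is the bookkeeping. The $\clt$-order is defined on upper sets, so the ideals are to be read through their dual filters, matching the convention for $\glt$ on lower sets. Also, $\Ram$ lives on $[\w]^2$ and must be transported to $\w$ via the computable bijection $\code{-}$, as in the proof of Theorem~\ref{thm:Sumn-Ram}. Since this coding is computable, it does not affect either order, so the transfer goes through unchanged. Notably, no positive reductions are needed, which avoids any question of whether the witnessing maps are computable.
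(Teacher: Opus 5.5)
Your proposal is correct and matches the paper's proof. The paper deduces the result immediately from Theorems~\ref{thm:Sumn-Hind} and~\ref{thm:Sumn-Ram}, using the fact that $\glt$ is coarser than $\clt$, so that each gamified non-reduction yields a $\clt$ non-reduction.
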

\begin{proof} Immediate from  Theorems~\ref{thm:Sumn-Hind} and \ref{thm:Sumn-Ram}, and the fact that $\glt$ is coarser than $\clt$.
\end{proof}

\begin{discussion} The $\clt$-order was already known to be non-linear on upper sets; Lee-van Oosten \cite[Proposition 5.11]{LvO13} gives one such instance, essentially by examining intersection patterns of subset families.\footnote{Technically, Lee-van Oosten show the result for the $\clt$-order on subset families, but the translation to upper sets is immediate since upward closure preserves the $\clt$-complexity of subset  families.} 
Our results substantially develop this picture -- for instance, by Theorem~\ref{thm:LT-Pw}, there exists an antichain of size $\mathfrak{c}$ within the $\clt$-order on upper sets (in fact, filters).
\end{discussion}

However, not every consequence of Section~\ref{sec:Pw} transfers automatically.  Unlike Theorem~\ref{thm:Pw-emb-Denz}, we cannot immediately deduce from 
Theorem~\ref{thm:LT-Pw} the existence of an embedding of $\Pw/\Fin$ between $\Fin$ and $\Denz$ in the $\clt$-order. In the non-computable setting, we know that $\calI\glt\Denz$ for any summable ideal $\calI$. This is no longer true in the computable setting, as below:

\begin{corollary}\label{cor:no-Denz} Let $\calU\subseteq\Pw$ be an upper $\Delta^1_1$-set. Then, there is no function $f\colon\w\to\w$ such that
	$$\calI^*\clt f\times \calU\,.$$
	for all summable ideals $\calI$.
\end{corollary}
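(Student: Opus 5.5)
The plan is to argue by contradiction and combine a countability argument with the separation machinery of Section~\ref{sec:Pw}. Suppose $f\colon\w\to\w$ satisfies $\calI^*\clt f\times\calU$ for every summable ideal $\calI$. By Theorem~\ref{thm:main-thm}(iii), relativised to the oracle $f$, each such reduction is witnessed by a partial continuous $\Phi$ computable in $f$. There are only countably many such witnesses, say $\{\Phi_e\}_{e\in\w}$. In particular, every $\Sums$ with $S$ co-infinite (Observation~\ref{obs:co-infinite}) is witnessed by some $\Phi_e$. Set
$$W_e:=\{S\subseteq\w \mid S \text{ co-infinite and } \Phi_e \text{ witnesses } (\Sums)^*\clt f\times\calU\}.$$
Then $\bigcup_e W_e$ is the set of co-infinite $S$, which is comeager in $\Pw$.

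The first key step is definability, which is where the hypothesis that $\calU$ is $\Delta^1_1$ enters. I would show that each $W_e$ is projective of low complexity, at worst $\Sigma^1_2$ in $f$ and a code for $\calU$. The relevant condition reads: for every $A\in(\Sums)^*$ there is an $(f\times\calU)$-branching tree $T$ with $[T]\subseteq\dom(\Phi_e)$ and $\Phi_e[T]\subseteq A$. I would then use the Lee--van Oosten presentation of basic topologies to get $W_e$ down to $\Pi^1_1$, or at least to a set with the Baire property. Failing that, Shoenfield absoluteness can be used to justify a Baire-category argument inside a forcing extension. Granting this, some $W_e$ is nonmeager and hence comeager in a basic open set $[s]$, with $s$ a finite binary string. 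This definability step is the one I expect to be the main obstacle. If I cannot obtain the Baire property directly, I would try instead to show that the set of co-infinite $S$ for which a given $\Phi_e$ fails is analytic in the parameters, so that Baire category still applies.

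The second step fixes this single witness $\Phi:=\Phi_e$ and reruns the argument of Claim~\ref{claim:Pw-second-direction}. The point is that the labelling function $\nu:=\nu_\Phi^{f\times\calU}$, the canonical tree $T_\Phi$ and its critical nodes $\{\tau_i\}$ depend only on $\Phi$ and $f\times\calU$, and not on $S$. If $\nu(\o)=c\in\w$, we reach a contradiction as in Case~1, taking any $S\in W_e$ and $A=\w\setminus\{c\}$. Otherwise, for each critical node $\tau_i$, the successor labels fall into two kinds: those landing in intervals $I_n$ with $n$ large, and those with infinitely many distinct small labels. This is the dichotomy of Cases~2b and~2c, where Case~2a again cannot occur. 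Since $W_e$ is comeager in $[s]$, I can choose $S\in W_e$ extending $s$ that contains a sufficiently sparse infinite set of indices $n$ chosen along a diagonalisation over $k=\lranglet{i}{b}$. I would then build the forbidden labels $C=\bigcup_k C_k$ inside the intervals $I_n$ with $n\in S$, so that $w_S(C)<\infty$ and hence $C\in\Sums$. Here the diagonalisation and the choice of $S$ are interleaved as a generic construction, which is exactly what comeagerness allows.

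Simultaneously, each critical node gets a successor set $D[i]$ that is $(f\times\calU)$-positive. This positivity is a property of $\Phi$ alone, coming from the definition of criticality, and does not depend on $S$. Then $A:=\w\setminus C\in(\Sums)^*$, and Lemma~\ref{lem:separation-2} together with Lemma~\ref{lem:separation-1} yields the usual contradiction. So $\Phi_e$ fails to witness $(\Sums)^*\clt f\times\calU$ for this $S\in W_e$, which is impossible. Besides the definability step, the delicate part is checking that the positivity of the sets $D[i]$ really is independent of $S$, so that the genericity of $S$ can be spent entirely on making $C$ small.
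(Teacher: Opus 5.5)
\textbf{Step 2 has a genuine gap.} Your Step 1 (countably many $f$-computable witnesses $\Phi_e$, then Baire category over $\{\Sums\}$) is a sensible frame. Step 2 carries all the weight, and it does not go through as described.

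The case analysis of Claim~\ref{claim:Pw-second-direction} cannot be rerun with target $\calU$. There, the intervals $I_n$ partition the \emph{successors} of a critical node. Both the exclusion of Case~2a and the positivity of $D[i]$ use that the target is $(\Sumq)^*$: a set $F_n\subseteq I_n$ with $|F_n|\geq |I_n|/3$ and $n\notin Q$ has $w_Q$-weight at least $1/3$. For a general $\Delta^1_1$ upper set, the successor side has no such structure. Criticality of $\tau_i$ only says that each label class $\{x\mid \nu(\tau_i\fr x)=c\}$, for $c\in\w\cup\{\bot\}$, is $\calU$-null.

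What you actually need is a single $C\in\Sums$ such that $\{x\mid\nu(\tau_i\fr x)\in C\}$ is $\calU$-positive for every critical $\tau_i$. This is exactly where $S$ matters. If $x\mapsto\nu(\tau_i\fr x)$ is a Kat\v{e}tov reduction of $\Sums$ to the dual ideal of $\calU$, no such $D[i]$ exists. For example, take $\calU=\Denz^*$ and $\Phi(p)=h(p(0))$ with $h$ as in \cite[Proposition 3.5]{HHH07}. So positivity of $D[i]$ is not ``a property of $\Phi$ alone'' in any useful sense, and you give no mechanism by which genericity of $S$ produces it.

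Step 2 also uses no property of $\calU$ beyond definability, yet it cannot work for every $\Delta^1_1$ upper set. For $\calU=\Pw\setminus\{\emptyset\}$, the computable map $\Phi(p)=p(0)$ witnesses $\calI^*\glt\calU$ for every proper ideal $\calI$, and the only $\calU$-positive set is $\w$. So some structural feature of $\calU$ has to enter. In the paper, all information about $\calU$ enters through the relativised Theorem F.

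\textbf{The paper's route.} The paper's proof is entirely different and short. Relativising \cite[Theorem F]{KiNg26}, $g\clt f\times\calU$ iff $g$ is hyperarithmetical in $f$; this is where $\Delta^1_1$ is used. By the Cofinality Theorem \cite[Theorem E]{KiNg26}, any $g$ lies $\clt$-below $\calI^*$ for some summable $\calI$. Choosing $g$ not hyperarithmetical in $f$, transitivity rules out $\calI^*\clt f\times\calU$.

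\textbf{How to keep your approach.} Drop Step 2 and use the structure of the sets $W_e$ instead, reading $W_e$ as the set of $S$ for which $\Phi_e$ witnesses $(\Sums)^*\glt\calU$, and assuming $\calU$ is a filter. Then:
\begin{itemize}
\item $W_e$ is $\subseteq$-downward closed, since $S'\subseteq S$ gives $\mathrm{Sum}_{S'}\subseteq\Sums$, and it is invariant under finite changes of $S$.
\item $W_e$ is closed under finite unions: each $C\in\mathrm{Sum}_{S_1\cup S_2}$ splits as $C_1\cup C_2$ with $C_j\in\mathrm{Sum}_{S_j}$, and two witness trees can be intersected.
\item $W_e$ omits $\w$: since $\emptyset\in(\mathrm{Sum}_\w)^*$, one would need a $\calU$-branching tree with $[T]=\emptyset$.
\item $W_e$ is $\Pi^1_1$, because existence of a witness tree for a given $A$ is a least-fixed-point condition. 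The naive count $\forall A\,\exists T$ gives only $\Pi^1_3$, not the $\Sigma^1_2$ you state.
\end{itemize}
Hence each nonempty $W_e$ is a proper ideal containing $\Fin$ with the Baire property, so it is meager. Countably many of them cannot cover the comeager family of co-infinite sets. This argument avoids Theorems E and F, and shows that no single $f$ handles even the family $\{\Sums\}$.
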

\begin{proof} This follows from combining several previous results in \cite{KiNg26}. Fix $f$. By relativising \cite[Theorem F]{KiNg26}, we get the following statement: for any $g\colon\w\to\w$, $g\clt f\times\calU$ iff $g$ is hyperarithmetical relative to $f$. In particular, if $g$ is not hyperarithmetical relative to $f$, then $g\not\clt f\times\calU$. However, by the Cofinality Theorem \cite[Theorem E]{KiNg26}, there exists a summable ideal $\calI$ such that $g\clt \calI^*$. Hence, this implies $\calI^*\not\clt f\times\calU$. 
\end{proof}

\begin{discussion} Notice this does not contradict \cite[Theorem 6.13]{KiNg26}, which can be translated as:
	$$\calI^*\glt \calU \iff \calI^*\clt f\times \calU \qquad\text{for some\,$f\colon\w\to\w$\,,}$$ 
for any ideal $\calI$ and upper set $\calU$. In this case, the Turing oracle $f\colon\w\to\w$ may vary depending on the choice of ideal $\calI$. Corollary~\ref{cor:no-Denz} clarifies this by saying there is no {\em fixed} oracle $f$ that works uniformly for all summable ideals.
\end{discussion}

We thus close the paper with an open question:

\begin{problem}\label{prob:Pw-LT} Does there exist an embedding of $\Pw/\Fin$ into the $\clt$-order whose image lies between $\Fin$ and $\Denz$?
\end{problem}

    \bibliography{Eff}

\end{document}